\numberwithin{equation}{section}
\begin{document}

\Year{2015} %
\Month{January}
\Vol{58} %
\No{1} %
\BeginPage{1} %
\EndPage{28} %
\AuthorMark{Guan Q A {\it et al.}}
\ReceivedDay{November 4, 2014}
\AcceptedDay{November 6, 2014}
\PublishedOnlineDay{; published online November 28, 2014}
\DOI{10.1007/s11425-014-4946-4} 

\title{Optimal constant in an ${\bm L}^{\bm2}$ extension problem\\ and a proof of a conjecture
of Ohsawa}{}


\author[1,2]{GUAN Qi'An}{}
\author[3,4]{ZHOU XiangYu}{Corresponding author}

\address[{\rm1}]{Beijing International Center for Mathematical Research, Peking University, Beijing {\rm 100871}, China;}
\address[{\rm2}]{School of Mathematical Sciences, Peking University, Beijing {\rm 100871}, China;}
\address[{\rm3}]{Institute of Mathematics, Academy of Mathematics and Systems Science,\\ Chinese Academy of Sciences, Beijing {\rm 100190}, China;}
\address[{\rm4}]{Hua Loo-Keng Key Laboratory of Mathematics, Chinese Academy of Sciences, Beijing {\rm 100190}, China}
\Emails{guanqian@amss.ac.cn,
xyzhou@math.ac.cn}\maketitle


 {\begin{center}
\parbox{14.5cm}{\begin{abstract}
In this paper, we solve the optimal constant problem in the
setting of Ohsawa's generalized $L^{2}$ extension theorem. As
applications, we prove a conjecture of Ohsawa and the extended Suita
conjecture, we also establish some relations between Bergman kernel
and logarithmic capacity on compact and open Riemann surfaces.\vspace{-3mm}
\end{abstract}}\end{center}}

 \keywords{$L^2$ extension theorem, optimal $L^2$ estimate, Bergman kernel,
a conjecture of Ohsawa, extended Suita conjecture}

 \MSC{Primary 32D15, 32E10, 32A25, 32L10, 32U05; Secondary 32W05, 14H55, 32Q28}

\renewcommand{\baselinestretch}{1.2}
\begin{center} \renewcommand{\arraystretch}{1.5}
{\begin{tabular}{lp{0.8\textwidth}} \hline \scriptsize
{\bf Citation:}\!\!\!\!&\scriptsize Guan Q A, Zhou X Y. Optimal constant in an $L^2$ extension problem and a proof of a conjecture
of Ohsawa. Sci China Math, 2015, 58, doi: 10.1007/s11425-014-4946-4\vspace{1mm}
\\
\hline
\end{tabular}}\end{center}

\baselineskip 11pt\parindent=10.8pt  \wuhao

\section{Introduction and main results}
\vspace{-1mm}
\subsection{Introduction and background}
Cartan's theorems A and B on Stein manifolds are important and
fundamental in several complex variables. An equivalent version is
an extension theorem as follows. Given a holomorphic vector bundle
on a Stein manifold and a closed complex subvariety in the Stein
manifold, then any section of the restriction of the bundle to the
complex subvariety can be holomorphically extended to the section of
the bundle over the Stein manifold. A natural question is that if
the holomorphic section of the bundle on the complex subvariety is
of a special property (say, invariant w.r.t. a group action or
bounded or $L^2$), could the holomorphic extension be still of the
special property? For the case of invariant version of the extension
theorem, the reader is referred to \cite{zhou2002}. In the present
paper, we deal with the question for the case of $L^2$ extension.

\subsubsection{\it Optimal constant problem in the $L^2$ extension theorem for hypersurfaces}
Ohsawa and Takegoshi \cite{ohsawa-takegoshi} obtained the famous
$L^2$ extension theorem without negligible weight for the
hypersurface with a holomorphic defining function. In the setting of
Ohsawa and Takegoshi, the optimal constant problem in the $L^2$
extension theorem was widely discussed for various cases by
many authors such as Berndtsson, Blocki, Demailly, Guan-Zhou-Zhu, Manivel,
McNeal-Varolin, Siu etc., in \cite{ohsawa-takegoshi, manivel93,
ohsawa2, siu96, berndtsson, demailly99, siu00, mcneal-varolin,
guan-zhou-zhu11, guan-zhou-zhu10, blocki12, blocki12a}. After his
joint paper with Takegoshi, Ohsawa in \cite{ohsawa3} obtained the
$L^2$ extension theorem on pseudoconvex domains in $\mathbb{C}^{n}$
with negligible weights for hypersurface with a holomorphic defining
function. In the setting of Ohsawa in \cite{ohsawa3}, continuing our
work \cite{guan-zhou-zhu10}, we in \cite{guan-zhou12} obtained the
optimal constant version of the $L^2$ extension theorem with
negligible weights on Stein manifolds as follows.

\begin{theorem}[\!\!\cite{guan-zhou12}]\label{t:guan-zhou}\quad
Let $X$ be a Stein manifold of dimension $n$. Let $\varphi$ and
$\psi$ be plurisubharmonic functions on $X$. Assume that $w$ is a
holomorphic function on $X$ such that
$\sup_X(\psi+2\log|w|)\leq0$ and $dw$ does not vanish
identically on any branch of $w^{-1}(0)$. Denote $H=w^{-1}(0)$
and $H_0=\{x\in H:dw(x)\neq 0\}$. Then  there exists a uniform
constant $C=1$ such that, for any holomorphic $(n-1)$-form $f$ on $H_0$
satisfying
$$c_{n-1}\int_{H_0}{\rm e}^{-\varphi-\psi}f\wedge\bar
f<\infty,$$
where $c_{k}=(-1)^{\frac{k(k-1)}{2}}\sqrt{-1}^k$ for
$k\in\mathbb{Z}$, there exists a holomorphic $n$-form F on $X$
satisfying $F=dw\wedge \tilde{f}$ on $H_0$ with
$\imath^*\tilde{f}=f$ and
$$c_n\int_{X}{\rm e}^{-\varphi}F\wedge\bar F\leq 2C\pi c_{n-1}
\int_{H_0}{\rm e}^{-\varphi-\psi}f\wedge\bar f,$$
where $\imath:H_0\longrightarrow X$ is the inclusion map.
\end{theorem}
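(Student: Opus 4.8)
The plan is to prove the optimal ($C=1$) estimate by an approximation-and-twisting argument built on the Ohsawa--Takegoshi/Donnelly--Fefferman machinery, refined so that no loss appears in the constant. First I would reduce to a more tractable situation: by exhausting $X$ with relatively compact smoothly bounded strongly pseudoconvex domains $X_j\Subset X$, by regularizing $\varphi$ and $\psi$ to smooth strictly plurisubharmonic functions (adding small multiples of a strictly plurisubharmonic exhaustion and using that the desired inequality is stable under decreasing limits via Fatou), and by first proving the statement with $\varphi$ replaced by $\varphi+\psi$ and then running the comparison as $\psi$ varies. The essential analytic point, following the Berndtsson--Charpentier and Blocki approach, is to solve a $\bar\partial$-equation $\bar\partial u = v$ where $v$ is a smooth extension of $dw\wedge f$ multiplied by a cutoff supported near $H$, with the weight $\varphi$ and with a carefully chosen auxiliary plurisubharmonic function $\eta$ and a positive function controlling the twist; the extension $F$ is then $v - u$.

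The heart of the matter — and the step I expect to be the main obstacle — is the precise choice of the functions appearing in the twisted Bochner--Kodaira--Nakano identity so that the resulting constant is exactly $2\pi$ and not merely a constant. Here I would use the technique of letting the cutoff concentrate on the tube $\{ \psi + 2\log|w| < -t\}$ and passing to the limit $t\to+\infty$, while simultaneously choosing the twist factor $\eta$ of the form $\eta = -\log(\psi + 2\log|w|) + $ (correction), mimicking the one-variable model computation on $\mathbb{C}$ with $w = z$, $\psi = 2\log|z|$, where the extremal extension of the form $f = dz$ restricted to $\{0\}$ has $L^2$ norm exactly $\pi$ times the "norm" of $f$ at the point. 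The denominator $\psi + 2\log|w|$ being $\le 0$ on all of $X$ is exactly what is needed to make the logarithmic term well-defined and to control the error terms uniformly; the inequality $c_n\int_X e^{-\varphi}F\wedge\bar F\le 2\pi c_{n-1}\int_{H_0}e^{-\varphi-\psi}f\wedge\bar f$ then emerges in the limit. One must verify that the solution $u$ obtained actually vanishes on $H_0$ to the right order so that $F = dw\wedge\tilde f$ with $\imath^*\tilde f = f$; this follows because the norm of $v$ blows up along $H$ like $e^{-\psi}$ while $u$ stays $L^2_{loc}$, forcing $u|_{H_0}=0$.

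After establishing the estimate on each $X_j$ with smooth strictly plurisubharmonic weights, I would conclude by a normal-families/weak-compactness argument: the extensions $F_j$ have uniformly bounded $L^2$ norms on compact sets, so a subsequence converges locally uniformly to a holomorphic $n$-form $F$ on $X$; by Fatou's lemma the limit satisfies the same estimate with the limiting (non-smooth, unbounded-below) weights $\varphi$ and $\psi$, and by construction $F = dw\wedge\tilde f$ on $H_0$. Finally, to see that $C=1$ is \emph{uniform} — i.e. independent of $X$, $\varphi$, $\psi$, $w$, $f$ — one notes that every estimate in the argument depended only on the sign condition $\sup_X(\psi+2\log|w|)\le 0$ and on plurisubharmonicity, never on quantitative bounds, so no dimensional or geometric constant can sneak in beyond the universal $2\pi$.
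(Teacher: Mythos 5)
Your overall framework---exhaustion by strongly pseudoconvex domains, regularization of the weights, a twisted $\bar\partial$-estimate with a cutoff concentrating on $\{\psi+2\log|w|<-t\}$, and a weak-compactness limit---matches the skeleton of what is done in this paper. But the step you flag as ``the heart of the matter'' is also the step where your proposal goes off course, and it is precisely the step that produces the optimal constant.

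You propose the twist $\eta=-\log\bigl(-(\psi+2\log|w|)\bigr)+(\text{correction})$. That is the Berndtsson--Charpentier/Donnelly--Fefferman choice, and it is known to lose a constant factor (it yields bounds of the shape $4\pi$, not $2\pi$; the ``correction'' does not rescue it). The paper does \emph{not} use a logarithmic twist. Instead, it sets $\eta=s(-v_{t_0,\varepsilon}\circ\Psi)$ and introduces a second auxiliary function $\phi=u(-v_{t_0,\varepsilon}\circ\Psi)$ modifying the weight, where $v_{t_0,\varepsilon}$ is a smoothed cutoff and $s,u$ are required to solve the coupled ODE system
\[
\Bigl(s+\tfrac{s'^2}{u''s-s''}\Bigr)\,{\rm e}^{u-t}=C,\qquad s'-su'=1 ,
\]
with $C=1$. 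Solving this system gives $u=-\log\bigl(1+\tfrac1\delta-{\rm e}^{-t}\bigr)$ and $s=\tfrac{(1+1/\delta)t+\frac1\delta(1+1/\delta)}{1+1/\delta-{\rm e}^{-t}}-1$ (see system \eqref{GZ} and the remark after it). The rational--exponential form of $s$, not a logarithm, is what eliminates the loss; and the presence of the second unknown $u$, which twists the \emph{weight} ${\rm e}^{-\Phi}$ in tandem with the twist of the metric, is essential---your sketch has only one unknown function. Without identifying this ODE system and its exact solution, there is no mechanism in your argument that forces $C$ down to $1$; ``every estimate depended only on the sign condition'' is not enough, because the Donnelly--Fefferman-type estimate with a purely logarithmic twist genuinely does not attain the sharp constant.

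There is also a structural difference worth noting. In this paper, Theorem~\ref{t:guan-zhou} is not proved directly: one sets $\Psi=\log|w|^2+\psi$, observes that $\Psi\in\Delta(S)$ for $S=H_0$ and that (by Lemma~\ref{l:lem9}) $d\lambda_z[\Psi]={\rm e}^{-\psi}d\lambda_{z'}$, and then invokes Theorem~\ref{t:guanzhou1} (itself the $\delta\to\infty$ limit of Theorem~\ref{t:guan-zhou3}, proved by the ODE machinery above). So your plan of ``combining $\varphi$ and $\psi$ then varying $\psi$'' is not how the paper organizes it; the paper absorbs $\psi+2\log|w|$ into a single negative plurisubharmonic polar function and hands it to the general extension theorem. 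The endgame you describe---holomorphicity of $u$ near $H$ from the vanishing of the cutoff's derivative, $u|_{H_0}=0$ from non-integrability of ${\rm e}^{-\Psi}$ along $S$, and the diagonal weak-limit argument in $\varepsilon$, $t_0$, $m$, $v$---does match the paper's arguments, so your wrapper is sound; it is the engine (the twist/weight pair solving the ODE) that is missing.
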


Our proof in \cite{guan-zhou12} was based on the methods of
\cite{guan-zhou-zhu11} and \cite{guan-zhou-zhu10}. In the present
paper, we will consider some generalization of the above theorem.

\subsubsection{\it Suita conjecture}
Suita conjecture (see\cite{suita}), which was posed originally on
open Riemann surfaces in 1972, was motivated to answer a question
posed by Sario and Oikawa in \cite{sario} about the relation between
the Bergman kernel $\kappa_{\Omega}$ for holomorphic $(1,0)$ forms
on an open Riemann surface $\Omega$ and logarithmic capacity
$c_{\beta}(z)$ which is locally defined by
$c_{\beta}(z)=\exp\lim_{\xi\rightarrow
z}(G_{\Omega}(\xi,z)-\log|\xi-z|)$ on $\Omega$, which admits a Green
function $G_{\Omega}$. The conjecture is stated below.

\th{Suita conjecture.}
{\it $(c_{\beta}(z))^{2}|dz|^{2}\leq\pi \kappa_{\Omega}(z)$, for any $z\in\Omega$.}\vspace{1mm}

The conjecture is true (see \cite{blocki12a,guan-zhou12}).

\subsubsection{\it Ohsawa's generalized $L^2$ extension theorem}
In \cite{ohsawa5}, Ohsawa considered an even more general setting
than before in \cite{ohsawa-takegoshi,ohsawa2,ohsawa3,ohsawa4} and
proved a general extension theorem (main theorem in \cite{ohsawa5}),
which covers earlier main results in
\cite{ohsawa-takegoshi,ohsawa2,ohsawa3,ohsawa4}:

\begin{theorem}[\!\!\cite{ohsawa5}]\label{t:ohsawa5}\quad
Let $(M,S)$ satisfy condition $({\rm ab})$, $h$ be a smooth metric on holomorphic vector
bundle $E$ on $M$ with
rank $r$. Then, for any function
$\Psi$ on $ M$ such that $\Psi\in \Delta_{h,\delta}(S)\cap C^{\infty}(M\setminus S)$,
there exists a uniform constant
$C=\max_{1\leq k\leq n}\frac{2^{8}\pi}{\frac{\pi^{k}}{k!}}$ such that, for
any holomorphic section $f$ of $K_{M}\otimes E|_{S}$ on $S$ satisfying
\[\sum_{k=1}^{n}\frac{\pi^{k}}{k!}\int_{S_{n-k}}|f|^{2}_{h}dV_{M}[\Psi]<\infty,\] there
exists a holomorphic section $F$ of $K_{M}\otimes E$ on $M$ satisfying $F = f$ on $ S$ and
\begin{eqnarray*}
\int_{M}|F|^{2}_{h}dV_{M}
\leq C(1+\delta^{-\frac{3}{2}})\sum_{k=1}^{n}\frac{\pi^{k}}{k!}
\int_{S_{n-k}}|f|^{2}_{h}dV_{M}[\Psi].
\end{eqnarray*}
Especially, if $\Psi\in \Delta(S)\cap\Delta_{h,\delta}(S)\cap C^{\infty}(M\setminus S)$,
then there exists a holomorphic section $F$ of $K_{M}\otimes E$ on $M$ satisfying $F = f$
on $ S$ and
\begin{eqnarray*}
\int_{M}|F|^{2}_{h}dV_{M}
\leq C\sum_{k=1}^{n}\frac{\pi^{k}}{k!}\int_{S_{n-k}}|f|^{2}_{h}dV_{M}[\Psi].
\end{eqnarray*}
\end{theorem}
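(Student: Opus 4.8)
\medskip
\noindent\emph{Sketch of proof.}\quad
The plan is to run the Ohsawa--Takegoshi scheme for the pair $(M,S)$ admitted by condition $({\rm ab})$, producing the section $F$ in the form $F=\tilde f-u$, where $\tilde f$ is a smooth extension of $f$ concentrated near $S$ and $u$ is the minimal $L^{2}$ solution of a $\bar\partial$-equation, estimated sharply enough to give the asserted constant. First I would exhaust $M$ by relatively compact subdomains, on each of which the Andreotti--Vesentini/H\"ormander apparatus applies because the complement of $S$ carries a complete K\"ahler metric --- essentially the content of condition $({\rm ab})$ --- solve the problem there, and recover the statement by extracting a weak limit in the Hilbert space of $L^{2}$ sections of $K_{M}\otimes E$. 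Completeness of the metric on $M\setminus S$ is what keeps this limit holomorphic, and the logarithmic pole of $\Psi$ along $S$ is what forces it to restrict to $f$.

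For the smooth extension, fix $t>0$, cover a neighborhood of $S$ by charts in which $f$ has local holomorphic extensions, glue them by a partition of unity multiplied by a cutoff $\chi_{t}$ supported in the tube $\{\Psi<-t\}$ and identically $1$ on $\{\Psi<-t-1\}$, and call the result $\tilde f_{t}$. Then $v_{t}:=\bar\partial\tilde f_{t}$ is a $\bar\partial$-closed $(0,1)$-form valued in $K_{M}\otimes E$, supported in the shell $\{-t-1<\Psi<-t\}$, and the normalization of $dV_{M}[\Psi]$ is arranged precisely so that
\[
\limsup_{t\to\infty}\ \big(\text{weighted }L^{2}\text{-norm of }v_{t}\big)^{2}\ \leq\ \sum_{k=1}^{n}\frac{\pi^{k}}{k!}\int_{S_{n-k}}|f|^{2}_{h}\,dV_{M}[\Psi],
\]
the constants $\pi^{k}/k!$ being the volumes of the unit balls in $\mathbb{C}^{k}$ that arise from integrating the model weight over the $k$ normal directions to the codimension-$k$ stratum $S_{n-k}$.

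The heart of the matter is the weighted $\bar\partial$-estimate used to solve $\bar\partial u_{t}=v_{t}$. I would apply the twisted Bochner--Kodaira--Nakano identity (the Ohsawa--Takegoshi/Donnelly--Fefferman twist) with twisting factors $\eta$ and $\lambda^{-1}$ taken to be explicit functions of $-\Psi$; this produces a bound of the schematic shape $\int_{M}\big(\eta\,\sqrt{-1}(\Theta_{h}+\partial\bar\partial\Psi)-\partial\bar\partial\eta-\lambda^{-1}\partial\eta\wedge\bar\partial\eta\big)(u_{t},u_{t})\leq\int_{M}(\eta+\lambda)|v_{t}|^{2}_{h}$. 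One then chooses $\eta$ and $\lambda$ so that, invoking the curvature content of the hypothesis $\Psi\in\Delta_{h,\delta}(S)$ --- which provides, up to lower-order terms, a $\delta$-weighted lower curvature bound comparing $\sqrt{-1}(\Theta_{h}+\partial\bar\partial\Psi)$ with $\delta\,\sqrt{-1}\,\partial\Psi\wedge\bar\partial\Psi$ --- the curvature operator on the left is nonnegative while the right-hand weight is as small as possible; optimizing the profile of $\eta$ against the $\delta$-deficit yields the factor $1+\delta^{-\frac{3}{2}}$. If moreover $\Psi\in\Delta(S)$, genuine positivity of $\sqrt{-1}(\Theta_{h}+\partial\bar\partial\Psi)$ is available in the directions that matter, the deficit disappears, and the clean constant $C$ survives. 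Having solved $\bar\partial u_{t}=v_{t}$ with finite norm against $e^{-\Psi}$ times the twist weight, the non-integrability of $e^{-\Psi}$ transverse to $S$ forces $u_{t}=0$ on $S$; hence $F_{t}:=\tilde f_{t}-u_{t}$ is a holomorphic section of $K_{M}\otimes E$ with $F_{t}=f$ on $S$ and the asserted estimate up to a $t$-dependent error. Letting $t\to\infty$ and passing to a weak limit finishes the proof.

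The main obstacle is to keep every estimate uniform in the codimension. The strata $S_{n-1},\dots,S_{0}$ enter the right-hand side with different normalizations, yet the single solution $u_{t}$ must be controlled against their sum, so the twist functions $\eta,\lambda$ and the cutoff geometry have to be chosen so that one absolute constant works for every $k$ --- which is exactly why $C=\max_{1\leq k\leq n}\frac{2^{8}\pi}{\pi^{k}/k!}$ appears with a maximum. A close second difficulty is the limit $t\to\infty$: showing that the weighted norm of $v_{t}$ really decreases to $\sum_{k}\frac{\pi^{k}}{k!}\int_{S_{n-k}}|f|^{2}_{h}\,dV_{M}[\Psi]$ and not to something strictly larger is a delicate computation in tubular neighborhoods of each stratum, which relies on the $\limsup$-definition of $dV_{M}[\Psi]$ and on the regularity of $S$ and of $\Psi$ packaged in condition $({\rm ab})$.
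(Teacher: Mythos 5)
The theorem you are trying to prove is a \emph{cited} result (Ohsawa's main theorem from his paper V), not one proven in this paper. The paper never reproduces Ohsawa's argument for the constants $C=\max_k 2^{8}\pi/(\pi^{k}/k!)$ and $(1+\delta^{-3/2})$; instead it supersedes them. The closest statement proved here is Theorem~\ref{t:guan-zhou-vector} (Main Theorem 2), which under the identical hypotheses obtains the strictly better bound $C=1$ with error factor $(1+\delta^{-1})$. So one cannot really compare your sketch to ``the paper's own proof of that statement''; what the paper offers is a proof of a sharper theorem that renders the $2^{8}\pi$ and the $\delta^{-3/2}$ obsolete.

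Your outline does follow the right scheme --- smooth extension concentrated near $S$ using a cutoff $\chi_t$ supported in $\{-t-1<\Psi<-t\}$, twisted Bochner--Kodaira--Nakano estimate with twist functions depending on $-\Psi$, and a weak limit as $t\to\infty$ --- and that is also the skeleton of the paper's proof of Theorem~\ref{t:guan-zhou-vector} (exhaustion by Stein subdomains $D_v$, family of convex functions $v_{t_0,\varepsilon}$ with $v''\to\mathbf{1}_{\{-t_0-1<t<-t_0\}}$, twist $\eta=s(-v_{t_0,\varepsilon}\circ\Psi)$ and $\phi=u(-v_{t_0,\varepsilon}\circ\Psi)$, Lemma~\ref{l:vector} and Lemma~\ref{l:vector7} for the $\bar\partial$-estimate, then $\varepsilon\to 0$ and $t_0\to\infty$). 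The paper's extra ingredient --- and the whole point of the paper --- is that instead of an ad hoc profile for $\eta$ the functions $s,u$ are determined by solving the coupled ODE system $(s+\frac{s'^2}{u''s-s''})e^{u-t}=C$, $s'-su'=1$ exactly with $C=1$, which yields $(1+\delta^{-1})$ rather than $(1+\delta^{-3/2})$, with no dimension- or codimension-dependent loss. In particular your heuristic ``the strata $S_{n-k}$ enter with different normalizations, which is why a $\max$ over $k$ appears'' is not actually a feature of the problem; the paper proves $C=1$ uniformly in $k$, so the max-over-$k$ is an artifact of Ohsawa's original bookkeeping, not an intrinsic obstruction.

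Two further imprecisions in your sketch are worth flagging. First, $\Psi\in\Delta_{h,\delta}(S)$ does not give a curvature lower bound by $\delta\sqrt{-1}\partial\Psi\wedge\bar\partial\Psi$: the hypothesis is $\Theta_{he^{-\Psi}}\geq 0$ and $\Theta_{he^{-(1+\delta)\Psi}}\geq 0$ on $M\setminus S$ (Nakano semipositivity), from which one gets, for any $0\leq v'\leq 1$, the convex-combination inequality $\frac{1}{\delta}\Theta_{he^{-\Psi}}+v'\,\partial\bar\partial\Psi\geq 0$ used to absorb the $(s'-su')\,v'\,\partial\bar\partial\Psi$ term produced by the twist. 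Second, the step ``optimizing the profile of $\eta$ yields $1+\delta^{-3/2}$'' and the appearance of $2^{8}$ are not derived; as a blind proof of Ohsawa's stated bound this is a genuine gap, though a forgivable one given that the cleanest route is to prove the stronger $C=1$, $(1+\delta^{-1})$ estimate directly, from which Ohsawa's weaker bound follows trivially.
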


For the sake of completeness, let us recall and explain the symbols
and notation in the above theorem as in \cite{ohsawa5}.

Let $M$ be an $n$-dimensional complex manifold, and $S$ be a closed
complex submanifold of $M$. Let $dV_{M}$ be a continuous volume form
on $M$. We consider a class of upper-semi-continuous function $\Psi$
from $M$ to the interval $[-\infty,0)$ such that

$(1)$ $\Psi^{-1}(-\infty)\supset S$, and $\Psi^{-1}(-\infty)$ is a closed subset of $M$;

$(2)$ If $S$ is $l$-dimensional around a point $x$, there exists a
local coordinate $(z_{1},\ldots,z_{n})$ on a neighborhood $U$ of $x$
such that $z_{l+1}=\cdots=z_{n}=0$ on $S\cap U$ and
$$\sup_{U\setminus S}\bigg|\Psi(z)-(n-l)\log\sum_{l+1}^{n}|z_{j}|^{2}\bigg|<\infty.$$

The set of such polar functions $\Psi$ will be denoted by $\#(S)$.

For each $\Psi\in\#(S)$, one can associate a positive measure $dV_{M}[\Psi]$ on $S$
as the minimum element of the
partial ordered set of positive measures $d\mu$ satisfying
$$\int_{S_{l}}fd\mu\geq\limsup_{t\to\infty}\frac{2(n-l)}{\sigma_{2n-2l-1}}
\int_{M}f{\rm e}^{-\Psi}{1} _{\{-1-t<\Psi<-t\}}dV_{M}$$ for any
nonnegative continuous function $f$ with ${\rm Supp} f\subset\subset M$.
Here $S_{l}$ denotes the $l$-dimensional component of $S$, and
$\sigma_{m}$ denotes the volume of the unit sphere in
$\mathbb{R}^{m+1}$.

Let $\omega$ be a K\"{a}hler metric on $M\setminus (X\cup S)$. We
can also define measure $dV_{\omega}[\Psi]$ on $S\setminus X$ as the
minimum element of the partial ordered set of positive measures
$d\mu'$ satisfying
$$\int_{S_{l}}fd\mu'\geq\limsup_{t\to\infty}\frac{2(n-l)}{\sigma_{2n-2l-1}}
\int_{M\setminus (X\cup S)}
f{\rm e}^{-\Psi}{1}_{\{-1-t<\Psi<-t\}}dV_{\omega}$$ for any
nonnegative continuous function $f$ with ${\rm Supp} (f)\subset\subset
M\setminus X$ (as
${\rm Supp}({1}_{\{-1-t<\Psi<-t\}})\cap {\rm Supp}(f)$ $\subset\subset M\setminus (X\cup S),$
the right-hand side of the above inequality is well-defined).

Let $u$ be a continuous section of $K_{M}\otimes E$, where $E$ is a holomorphic
vector bundle equipped with
a continuous metric $h$ on $M$.
We define
$$|u|^{2}_{h}|_{V}:=\frac{c_{n}h(e,e)v\wedge\bar{v}}{dV_{M}},$$
where $u|_{V}=v\otimes e$ for an open set $V\subset M\setminus X$, $v$ is a continuous
section of $K_{M}|_{V}$
and $e$ is a continuous section of $E|_{V}$ (especially, we define
$$|u|^{2}|_{V}:=\frac{c_{n}u\wedge\bar{u}}{dV_{M}},$$
when $u$ is a continuous section of $K_{M}$). It is clear that
$|u|^{2}_{h}$ is independent of the choice of $V$. Actually by the
following remark, one may see the following relationship between
$dV_{\omega}[\Psi]$ and $dV_{M}[\Psi]$ (resp. $dV_{\omega}$ and
$dV_{M}$), precisely,
\begin{align}
\label{equ:9.1}
&\int_{M\setminus X}f|u|^{2}_{h,\omega}dV_{\omega}[\Psi]
=\int_{M\setminus X}f|u|^{2}_{h}dV_{M}[\Psi],\\
\label{equ:9.2} &\bigg({\rm resp.}\ \int_{M\setminus
X}f|u|^{2}_{h,\omega}dV_{\omega} =\int_{M\setminus
X}f|u|^{2}_{h}dV_{M}\bigg),
\end{align}
where $f$ is a continuous function with compact support on $M\setminus X$.

\begin{remark}\quad
For the neighborhood $U$, let $u|_{U}=v\otimes e$. Note that
\begin{align}
\label{equ:9.3}
\int_{M\setminus X}f{1}_{\{-1-t<\Psi<-t\}}
|u|^{2}_{h,\omega}{\rm e}^{-\Psi}dV_{\omega}
&=\int_{M\setminus X}f{1}_{\{-1-t<\Psi<-t\}}h(e,e)c_{n}v\wedge\bar{v}{\rm e}^{-\Psi}\nonumber\\
&=\int_{M\setminus X}f{1}_{\{-1-t<\Psi<-t\}}|u|^{2}_{h}{\rm e}^{-\Psi}dV_{M},
\end{align}
and respectively,
\begin{equation}
\int_{M\setminus (X\cup S)}f
|u|^{2}_{h,\omega}{\rm e}^{-\Psi}dV_{\omega}
=\int_{M\setminus (X\cup S)}f h(e,e)c_{n}v\wedge\bar{v}{\rm e}^{-\Psi}
=\int_{M\setminus (X\cup S)}f |u|^{2}_{h}{\rm e}^{-\Psi}dV_{M},
\end{equation}
where $f$ is a continuous function with compact support on $M\setminus X$. As
$${\rm Supp}({1}_{\{-1-t<\Psi<-t\}})\cap {\rm Supp}(f)\subset\subset M\setminus (X\cup S),$$
equality (\ref{equ:9.3}) is well-defined.
Then we have equalities (\ref{equ:9.1}) and (\ref{equ:9.2}).
\end{remark}

It is clear that $|u|^{2}_{h}$ is independent of the choice of $V$,
while $|u|^{2}_{h}dV_{M}$ is independent of the choice of $dV_{M}$
(resp. $|u|^{2}_{h}dV_{M}[\Psi]$ is
independent of the choice of $dV_{M}$).
Then the space of $L^{2}$ integrable holomorphic section of $K_{M}$ is denoted by
$A^{2}(M,K_{M},dV_{M}^{-1},dV_{M})$ (resp.
the space of holomorphic section of $K_{M}|_{S}$ which is $L^{2}$ integrable with
respect to the measure
$dV_{M}[\Psi]$ is denoted by $A^{2}(S,K_{M}|_{S},dV_{M}^{-1},$ $dV_{M}[\Psi])$).

\begin{definition}\quad
Let $M$ be an $n$-dimensional complex manifold with a continuous
volume form $dV_{M}$, and $S$ be a closed complex submanifold of
$M$. We call the data $(M,S)$ satisfy the condition $({\rm ab})$ if $M$ and
$S$ satisfy the following conditions:

There exists a closed subset $X\subset M$ such that:

$({\rm a})$ $X$ is locally negligible with respect to $L^2$ holomorphic functions, i.e.,
for any local coordinate
neighborhood $U\subset M$ and for any $L^2$ holomorphic function $f$ on $U\setminus X$,
there exists an $L^2$
holomorphic function  $\tilde{f}$ on $U$ such that $\tilde{f}|_{U\setminus X}=f$ with
the same $L^{2}$ norm.

$({\rm b})$ $M\setminus X$ is a Stein manifold which intersects with every
component of $S$.
\end{definition}

\begin{remark}\quad
In fact, the condition (ab) is the same as condition (1) in Theorem
4 in \cite{ohsawa5}. The data $(M,S)$ with the condition (ab) includes all the following well-known examples:

(1) $M$ is a Stein manifold (including open Riemann surfaces), and $S$ is
any closed complex submanifold of $M$;

(2) $M$ is a complex projective algebraic manifold (including
compact Riemann surfaces), and $S$ is any closed complex submanifold
of $M$;

(3) $M$ is a projective family (see \cite{siu00}), and $S$ is any closed
complex submanifold of $M$.
\end{remark}

The following remark shows the extension properties of holomorphic sections
of holomorphic vector bundles from $M\setminus X$ to $M$.

\begin{remark}
\label{r:extend}\quad
Let $(M,S)$ satisfy the condition (ab),
$h$ be a singular metric on holomorphic line bundle $L$ on $M$ (resp. continuous
metric on holomorphic vector
bundle $E$ on $M$ with rank $r$),
where $h$ has locally positive lower bound.
Let $F$ be a holomorphic section of $K_{M\setminus X}\otimes E|_{M\setminus X}$,
which satisfies $\int_{M\setminus X}|F|^{2}_{h}<\infty$.

As $h$ has locally positive lower bound and $M$ satisfies (a) of the
condition (ab), there is a holomorphic section $\tilde{F}$ of
$K_{M}\otimes L$ on $M$ (resp. $K_{M}\otimes E$), such that
$\tilde{F}|_{M\setminus X}=F$.
\end{remark}

Let $\Delta_{h,\delta}(S)$ be the subset of function $\Psi$ in $\#(S)$ which satisfies
$\Theta_{h{\rm e}^{-\Psi}}\geq 0$
and $\Theta_{h{\rm e}^{-(1+\delta)\Psi}}\geq 0$ on $M\setminus S$ in the sense of Nakano.

Let $\Delta(S)$ be the subset of plurisubharmonic functions $\Psi$
in $\#(S)$, $\varphi$ be a locally integrable function on $M$. Let
$\Delta_{\varphi,\delta}(S)$ be the subset of functions $\Psi$ in
$\#(S)$, such that $\Psi+\varphi$ and $(1+\delta)\Psi+\varphi$ are
both plurisubharmonic functions on $M$.\emph{}

\subsection{Main results}
In the present paper, we give some generalizations of both Theorems \ref{t:guan-zhou}
and \ref{t:ohsawa5}.

\subsubsection{\it Optimal constant in the generalized $L^2$ extension theorem for
non-smooth polar function}
In the following theorem, we give an optimal constant of a
generalization of Theorem \ref{t:ohsawa5} for trivial line bundle
and  non-plurisubharmonic polar function $\Psi$. After that, we show
that the constant $C$, which is equal to $1$, is optimal.

\begin{theorem}[Main theorem 1]\label{t:guan-zhou3}\quad
Let $(M,S)$ satisfy condition $({\rm ab})$, $\varphi$ be a continuous
 function on $M$. Then, for negative function
$\Psi$ on $M$ satisfying $\Psi\in \Delta_{\varphi,\delta}(S)$, there exists a uniform
constant
$C=1$, such that, for
any holomorphic section $f$ of $K_{M}|_{S}$ on $S$ satisfying
\[\sum_{k=1}^{n}\frac{\pi^{k}}{k!}\int_{S_{n-k}}
|f|^{2}{\rm e}^{-\varphi}dV_{M}[\Psi]<\infty,\] there
exists a holomorphic section $F$ of $K_{M}$ on $M$ satisfying $F = f$ on $ S$ and
\begin{eqnarray*}
\int_{M}|F|^{2}{\rm e}^{-\varphi}dV_{M}
\leq C(1+\delta^{-1})\sum_{k=1}^{n}\frac{\pi^{k}}{k!}
\int_{S_{n-k}}|f|^{2}{\rm e}^{-\varphi}dV_{M}[\Psi],
\end{eqnarray*}
where $|f|^{2}:=\frac{c_{n}f\wedge\bar{f}}{dV_{M}}$.
\end{theorem}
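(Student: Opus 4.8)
The plan is to reduce Main theorem 1 to the optimal-constant extension result already available in the literature (in particular the estimate with constant $C=1$ and factor $(1+\delta^{-1})$ proved in \cite{guan-zhou12} and the surrounding circle of papers), by a sequence of standard reductions that strip away the obstacles one at a time. First I would reduce to the Stein case: since $(M,S)$ satisfies condition $({\rm ab})$, the manifold $M\setminus X$ is Stein and meets every component of $S$, and by Remark~\ref{r:extend} any $L^2$ holomorphic $n$-form on $M\setminus X$ (with respect to the weight ${\rm e}^{-\varphi}$, which is continuous and hence has locally positive lower bound for the metric) extends across the negligible set $X$ with the same $L^2$ norm. The measures $dV_M[\Psi]$ and integrals $\int_M|F|^2{\rm e}^{-\varphi}dV_M$ are insensitive to $X$ by \eqref{equ:9.1}--\eqref{equ:9.2} and the accompanying remark, so it suffices to produce $F$ on $M\setminus X$; thus we may and do assume $M$ itself is Stein.

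Next I would handle the passage from the single-hypersurface formulation of Theorem~\ref{t:guan-zhou} to the general submanifold $S$ of pure or mixed codimension encoded in the $\sum_k \frac{\pi^k}{k!}\int_{S_{n-k}}$ sum. On a Stein manifold one can realize each codimension-$k$ component $S_{n-k}$ locally as the common zero set of $k$ holomorphic functions whose differentials are independent along $S_{n-k}$; the weight $\Psi$, being in $\#(S)$, behaves near $S_{n-k}$ like $k\log\sum|z_j|^2$, which is exactly the weight that makes the constant $\frac{\pi^k}{k!}$ appear when one computes $dV_M[\Psi]$ by the fibre-integration/limsup definition (the volume $\sigma_{2k-1}$ of the sphere and the residue computation produce precisely $\frac{\pi^k}{k!}$). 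The strategy here is to iterate the codimension-one extension theorem $k$ times, or equivalently to invoke the version of the $L^2$ extension theorem for submanifolds of higher codimension that is built on Theorem~\ref{t:guan-zhou}; the hypothesis $\Psi\in\Delta_{\varphi,\delta}(S)$ says exactly that $\Psi+\varphi$ and $(1+\delta)\Psi+\varphi$ are plurisubharmonic, which after rescaling $\Psi\mapsto\frac{1}{1+\delta}\Psi$ or similar lets us apply the psh-weight hypothesis of Theorem~\ref{t:guan-zhou} with $\psi$ taken to be (a multiple of) $\Psi$ and the $\sup(\psi+2\log|w|)\le0$ normalization coming from the negativity of $\Psi$ and the local model.

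The only genuinely new point over Theorem~\ref{t:guan-zhou} is that $\Psi$ is merely negative, not plurisubharmonic, so the hard part will be extracting the factor $(1+\delta^{-1})$ with the $1/\delta$ dependence while keeping the leading constant equal to $1$. The mechanism is the standard one: write $\varphi$ as the "base" weight and use $\delta\Psi$ (which is psh up to $\varphi$ by the hypothesis, since $(1+\delta)\Psi+\varphi=\varphi+\delta\Psi+\Psi$ is psh and $\Psi\le 0$) as the auxiliary psh weight in the Ohsawa--Takegoshi-type estimate; the $2\pi$ in Theorem~\ref{t:guan-zhou} is absorbed because the measure $dV_M[\Psi]$ on the left already carries the sphere-volume normalization, and the optimization over the cutoff profile of $\Psi$ produces $1+\delta^{-1}$ exactly as in \cite{guan-zhou12}. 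So the proof is: (i) reduce to $M$ Stein via condition $({\rm ab})$ and Remark~\ref{r:extend}; (ii) localize and decompose $S$ into its equidimensional strata, matching $dV_M[\Psi]$ with the $\frac{\pi^k}{k!}$ constants through the local model of $\Psi$; (iii) apply the optimal codimension-one estimate with weight $\varphi$ and psh auxiliary weight $\delta\Psi/(1+\delta)$ (or $\Psi$ after rescaling), iterating in codimension; (iv) patch the local extensions with a standard $\bar\partial$-argument on the Stein manifold, controlling the error by the same estimate, to get the global $F$ with $\int_M|F|^2{\rm e}^{-\varphi}dV_M\le (1+\delta^{-1})\sum_k\frac{\pi^k}{k!}\int_{S_{n-k}}|f|^2{\rm e}^{-\varphi}dV_M[\Psi]$; (v) finally extend $F$ from $M\setminus X$ back to $M$. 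I expect step (iii)--(iv), i.e.\ running the optimal-constant machinery with a non-psh $\Psi$ and tracking that the leading constant stays $1$, to be the crux; everything else is bookkeeping already present in \cite{ohsawa5} and \cite{guan-zhou12}.
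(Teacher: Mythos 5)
Your step (i) — reducing to the Stein case via Remark~\ref{r:extend} — is exactly what the paper does, but steps (ii)--(iv) diverge from the actual proof and contain genuine obstructions. You propose to iterate the codimension-one result (Theorem~\ref{t:guan-zhou}) in codimension and patch with a $\bar\partial$-argument, but this runs backwards: Theorem~\ref{t:guan-zhou} assumes $\varphi$ and $\psi$ are \emph{separately} plurisubharmonic, while Main theorem 1 only assumes $\varphi+\Psi$ and $\varphi+(1+\delta)\Psi$ are plurisubharmonic with $\varphi$ merely continuous (and possibly very far from psh). So Main theorem~1 is strictly more general, and indeed in the paper Theorem~\ref{t:guan-zhou} is ultimately deduced \emph{from} it (via Theorem~\ref{t:guanzhou1}, the $\delta\to\infty$ specialization), not the other way round. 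The proposed rescaling $\Psi\mapsto\Psi/(1+\delta)$ also breaks the polar normalization: membership in $\#(S)$ requires $\Psi-(n-l)\log\sum|z_j|^2$ to be bounded near $S$, so rescaling $\Psi$ changes the residue and $dV_M[\Psi]$ would no longer be the object appearing in the statement. Finally, iterating the codimension-one estimate $k$ times together with a patching $\bar\partial$-step loses a constant at each stage; there is no reason the optimal factor $1\cdot(1+\delta^{-1})$ survives such an iteration, and this is precisely the kind of loss the direct argument is designed to avoid.

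The paper instead runs a single global argument on each $D_v$ in a Stein exhaustion: it regularizes $\varphi+\Psi$ and $\varphi+(1+\delta)\Psi$ (not $\varphi$ and $\Psi$ individually) to decreasing smooth psh approximants, introduces the smoothed cutoff family $v_{t_0,\varepsilon}$, sets $\eta=s(-v_{t_0,\varepsilon}\circ\Psi)$, $\phi=u(-v_{t_0,\varepsilon}\circ\Psi)$, $\Phi=\varphi+\Psi+\phi$, and applies the twisted identity (Lemma~\ref{l:lem3}). The two psh hypotheses combine, via $0\le v'_{t_0,\varepsilon}\le1$, into the key curvature inequality $\frac{1}{\delta}\partial\bar\partial(\Psi+\varphi)+(v'_{t_0,\varepsilon}\circ\Psi)\partial\bar\partial\Psi\ge0$; together with the choice $g=\frac{u''s-s''}{s'^2}$ this yields the a priori estimate needed for Lemma~\ref{l:lem7}. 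The $\frac{\pi^k}{k!}$ constants then drop out of the limsup definition of $dV_M[\Psi]$ over the annuli $\{-t_0-1<\Psi<-t_0\}$ as $t_0\to\infty$, with no codimension iteration. The crux you correctly anticipate — keeping the leading constant $1$ — is handled by solving the ODE system $(s+\frac{s'^2}{u''s-s''})e^{u-t}=1$, $s'-su'=1$, whose explicit solution $u=-\log(1+\frac1\delta-e^{-t})$, $s=\frac{(1+\frac1\delta)t+\frac1\delta(1+\frac1\delta)}{1+\frac1\delta-e^{-t}}-1$ produces the factor $1+\delta^{-1}$ in the limit $\lim_{t_0\to\infty}e^{-A_{t_0}}=1+\delta^{-1}$. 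This ODE mechanism is the missing idea in your sketch.
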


\begin{remark}\label{r:guan-zhou3} \quad
We will see that the above constant is optimal
in the proof. Especially, we will illustrate that for any given
planar domain $\Omega$ in $\mathbb{C}$ and point $z_{0}\in\Omega$,
the above constant is also optimal, where $S=\{z_{0}\}$.
\end{remark}

Note that for any holomorphic line bundle $L$ on Stein manifold
$M\setminus X$, we can choose a complex hypersurface $H$ on
$M\setminus X$, such that $L|_{M\setminus (X\cup H)}$ is a trivial
line bundle, where $M\setminus (X\cup H)$ is a Stein manifold, and
$H$ does not contain any component of $S\setminus X$. By Remark
\ref{r:extend}, we obtain the following corollary:

\begin{corollary}\label{coro:line1}\quad
Let $(M,S)$ satisfy condition $({\rm ab})$, $L$ be a holomorphic line bundle on $M$ with a
continuous metric $h$
$($resp. a singular metric $h$ satisfying $\Theta_{h}\geq \omega$, where $\omega$ is a
smooth real $(1,1)$-form
on $M)$. Then, for negative function
$\Psi\in\#(S)$ on $M$ satisfying $\Theta_{h{\rm e}^{-\Psi}}\geq0$ and
$\Theta_{h{\rm e}^{-(1+\delta)\Psi}}\geq0$
$($resp. $\sqrt{-1}\partial\bar\partial\Psi+\omega>0$ and
$(1+\delta)\sqrt{-1}\partial\bar\partial\Psi+\omega>0)$
in the sense of current on $M$, there exists a uniform constant
$C=1$, such that, for
any holomorphic section $f$ of $K_{M}\otimes L|_{S}$ on $S$ satisfying
\[\sum_{k=1}^{n}\frac{\pi^{k}}{k!}\int_{S_{n-k}}|f|^{2}_{h}dV_{M}[\Psi]<\infty,\] there
exists a holomorphic section $F$ of $K_{M}\otimes L$ on $M$ satisfying $F = f$ on $ S$ and
\begin{eqnarray*}
\int_{M}|F|^{2}_{h}dV_{M}
\leq C(1+\delta^{-1})\sum_{k=1}^{n}\frac{\pi^{k}}{k!}
\int_{S_{n-k}}|f|^{2}_{h}dV_{M}[\Psi].
\end{eqnarray*}
\end{corollary}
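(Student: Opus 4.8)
The plan is to reduce the statement to the case of the trivial line bundle, for which it is exactly Theorem \ref{t:guan-zhou3}, by trivializing $L$ away from a suitable hypersurface and then carrying the resulting extension back across the removed set by Remark \ref{r:extend}. Since $M\setminus X$ is Stein, every holomorphic line bundle on it is associated to a divisor, so (as already noted in the paragraph preceding the corollary) there is a complex hypersurface $H\subset M\setminus X$ such that $L|_{M''}$ is holomorphically trivial, $M'':=M\setminus(X\cup H)$ is Stein, and $H$ contains no component of $S\setminus X$, so that $S\setminus(X\cup H)$ is dense in each component of $S$. Fix a nowhere-vanishing holomorphic frame $e$ of $L$ on $M''$ and set $\mathrm{e}^{-\varphi}:=h(e,e)$; then $\varphi$ is continuous on $M''$ in the continuous-metric case and quasi-plurisubharmonic (in particular locally bounded above) in the singular case. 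Reading the hypotheses in the frame $e$, a direct curvature computation shows in both cases that $\varphi+\Psi$ and $\varphi+(1+\delta)\Psi$ are plurisubharmonic on $M''$: in the first case this is $\Theta_{h\mathrm{e}^{-\Psi}}\ge0$ and $\Theta_{h\mathrm{e}^{-(1+\delta)\Psi}}\ge0$, and in the second it follows from $\Theta_h\ge\omega$ together with $\sqrt{-1}\partial\bar\partial\Psi+\omega>0$ and $(1+\delta)\sqrt{-1}\partial\bar\partial\Psi+\omega>0$. Hence $\Psi|_{M''}\in\Delta_{\varphi,\delta}(S\cap M'')$, and $(M'',S\cap M'')$ satisfies condition $({\rm ab})$ with empty auxiliary set, since $M''$ is Stein and meets every component of $S$ (each component of $S$ meets $M\setminus X$ and is not contained in $H$).

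Next I use $e$ to identify holomorphic sections of $K_{M''}\otimes L$ with holomorphic $n$-forms on $M''$; then $|\cdot|^2_h=|\cdot|^2\mathrm{e}^{-\varphi}$ pointwise, and $(X\cup H)\cap S_{n-k}$ is negligible for the measure $dV_M[\Psi]$ on each component $S_{n-k}$ (as $X$ is negligible and $H$ meets $S_{n-k}$ in a proper analytic subset), so the source integral is precisely the one appearing in Theorem \ref{t:guan-zhou3}. In the continuous-metric case the theorem applies directly on $M''$ and yields a holomorphic $n$-form $F_0$ on $M''$ with $F_0=f$ on $S\cap M''$ and $\int_{M''}|F_0|^2\mathrm{e}^{-\varphi}dV_M\le C(1+\delta^{-1})\sum_{k}\frac{\pi^k}{k!}\int_{S_{n-k}}|f|^2_h\,dV_M[\Psi]$. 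In the singular-metric case one first replaces $\varphi$ by continuous weights $\varphi_j$ with $\mathrm{e}^{-\varphi_j}\nearrow\mathrm{e}^{-\varphi}$ for which $\varphi_j+\Psi$ and $\varphi_j+(1+\delta)\Psi$ remain plurisubharmonic, applies the theorem to each $\varphi_j$, and passes to the limit using Montel's theorem and Fatou's lemma. Transporting $F_0$ back through $e$ gives a holomorphic section $\hat F_0$ of $K_{M''}\otimes L$ with $\int_{M''}|\hat F_0|^2_h\,dV_M$ bounded by the displayed right-hand side.

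Finally, since $h$ has a local positive lower bound, in a local frame of $L$ near a point of $H$ the section $\hat F_0$ is locally $L^2$, so by the $L^2$ Riemann extension theorem it extends holomorphically across the hypersurface $H$, with unchanged norm, to a holomorphic section of $K_{M\setminus X}\otimes L$; Remark \ref{r:extend} then extends this section across $X$ to a holomorphic section $F$ of $K_M\otimes L$ with $\int_M|F|^2_h\,dV_M=\int_{M''}|\hat F_0|^2_h\,dV_M$. As $S\setminus(X\cup H)$ is dense in each component of $S$ and $F$, $f$ are both holomorphic there, $F=f$ on all of $S$, which gives the corollary with the same constant $C(1+\delta^{-1})$, $C=1$.

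The only step that is not formal is the singular-metric case: producing continuous approximants $\varphi_j$ that simultaneously keep \emph{both} $\varphi_j+\Psi$ and $\varphi_j+(1+\delta)\Psi$ plurisubharmonic — equivalently, regularizing the quasi-plurisubharmonic weight on the Stein manifold $M''$ so as to preserve the two curvature inequalities at once — is where I would concentrate. I would exploit that on a Stein manifold plurisubharmonic functions are decreasing limits of smooth ones and that the set of admissible inequalities is convex in the coefficient of $\Psi$; alternatively, one may invoke directly that the proof of Theorem \ref{t:guan-zhou3} goes through for weights $\varphi$ with $\varphi+\Psi$ and $\varphi+(1+\delta)\Psi$ plurisubharmonic without a continuity assumption, which dispenses with the approximation altogether.
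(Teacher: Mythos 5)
Your proposal follows exactly the route the paper intends: the paper's entire justification of this corollary is the sentence preceding it (trivialize $L$ on $M\setminus(X\cup H)$ for a hypersurface $H$ chosen so that $M'':=M\setminus(X\cup H)$ is Stein and $H$ contains no component of $S\setminus X$, then invoke Theorem~\ref{t:guan-zhou3} and Remark~\ref{r:extend}). Your reading of the curvature hypotheses in the frame $e$ as $\varphi+\Psi$, $\varphi+(1+\delta)\Psi$ plurisubharmonic, and your extension back across $H$ by $L^2$ removability and across $X$ via Remark~\ref{r:extend}, are all correct and faithful to the paper.

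The one place where you are more scrupulous than the paper is the singular-metric branch, and the concern you raise is legitimate: Theorem~\ref{t:guan-zhou3} is stated for \emph{continuous} $\varphi$, whereas here $\varphi$ is only quasi-plurisubharmonic, so the theorem does not apply verbatim. The paper simply does not address this. Of the two repairs you propose, the second (observing that the proof of Theorem~\ref{t:guan-zhou3} needs only that $\varphi+\Psi$ and $\varphi+(1+\delta)\Psi$ be plurisubharmonic, after which one regularizes these two sums directly as in the proof) is the one consistent with how the paper treats its other corollaries, e.g.\ the remark before Corollary~\ref{coro:line3} about ``retraction and convolution.'' The first repair, approximating $e^{-\varphi}$ monotonically from below by continuous weights $\varphi_j$, runs into the difficulty that a decreasing sequence $\varphi_j\downarrow\varphi$ does not automatically keep $\varphi_j+\Psi$ and $\varphi_j+(1+\delta)\Psi$ plurisubharmonic; making this work requires splitting $\varphi$ as (smooth potential for $\omega$) plus (plurisubharmonic), which on $M''$ presupposes a global $\partial\bar\partial$-potential for $\omega$ and is therefore not obviously available for an arbitrary smooth real $(1,1)$-form $\omega$. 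I would drop that alternative and keep only the second. With that choice made, the proposal is essentially the paper's argument, carried out in full.
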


\subsubsection{\it Some relations between Bergman kernel and logarithmic capacity
on compact Riemann surfaces}
Let X be a compact Riemann surface with genus $g\geq 2$ (resp. $g=1$ complex torus,
which is denoted by
$X_{\tau}:=\mathbb{C}/(\mathbb{Z}+\tau\mathbb{Z})$ ($\tau\in\mathbb{C}$, $\Im\tau>0$)).
Then there exists a
conformal metric $\omega$ on $X$, obtained by descending the
Poincar\'{e} metric from its universal covering space to itself, such that ${\rm curv}\omega=-1$
(resp. the
K\"{a}hler metric $\omega:=\frac{1}{\Im\tau}dz\otimes d\bar{z}$).

Consider the function $g(p,q): X\times X \to [-\infty,0)$, such that for each fixed
$q\in X $:

(a) $\Delta_{\omega}g(\cdot,q)=-1$ on $X\setminus q$
(here $\Delta_{\omega}$ is the Laplacian with respect to the metric $\omega$);

(b) $g(p,q) = \log {\rm dist}_{\omega}(p,q)+O(1)$, as $p\to q$;

(c) $g(p,p) =-\infty$.

The existence and uniqueness of such a function for general compact Riemann surfaces are
proved by Arakelov \cite{Arak}. And define further
$c_{X}(p):=\exp(\lim_{q\to p}(g(p,q)-\log {\rm dist}_{\omega}(p,q)))$
to be
the logarithmic capacity with respect to $p$.

By Corollary \ref{coro:line1}, we can obtain the relations between Bergman kernel
and logarithmic capacity on
Riemann surface with genus $g\geq 2$  as follows:

\begin{theorem}\label{t:arak}\quad
Let $\kappa_{X,m}$ be the Bergman kernel of holomorphic line bundle $mK_{X}$ $(m\geq2)$,
where $X$ is a
compact Riemann surface with genus $g\geq2$.
Then we have
\begin{equation}
\pi\bigg(1+\frac{1}{(g-1)(m-1)-1}\bigg)|\kappa_{X,m}(p,p)|_{h^{m}}\geq c_{X}^{2}(p),
\end{equation}
where $h$ is the Hermitian metric on $K_{X}$ induced by Poincar\'{e} metric on $X$ $($see the
proof of the present theorem$)$.
\end{theorem}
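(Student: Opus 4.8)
The plan is to apply Corollary~\ref{coro:line1} on the compact Riemann surface $X$ with $L=mK_X$, $S=\{p\}$, and a polar function $\Psi$ built from the Arakelov Green function $g(\cdot,p)$, and then to optimize the extra parameter $\delta$. First I would set up the local picture: near $p$ choose a coordinate $z$ with $z(p)=0$, so that $dV_M$ is given locally by $c_1 dz\wedge d\bar z$ times a positive factor coming from $\omega$, and the fibre metric $h^m$ on $mK_X$ near $p$ is the $m$-th power of the Poincar\'e-induced metric $h$ on $K_X$. The natural candidate is $\Psi:=2 a\, g(\cdot,p)$ for a suitable constant $a>0$; since $g(z,p)=\log|z|+O(1)$ near $p$, we get $\sup_{U\setminus S}|\Psi-(n-l)\log|z|^2|<\infty$ with $n=1$, $l=0$ exactly when $a=1$, so $\Psi\in\#(S)$. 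Then $dV_M[\Psi]$ is a point mass at $p$, and by the standard computation (the same one used in the Suita-conjecture applications) the right-hand side $\sum_{k=1}^1\frac{\pi^k}{k!}\int_{S_{0}}|f|^2_{h^m}dV_M[\Psi]$ evaluates to $\pi$ times the squared pointwise norm of the extended jet of $f$ at $p$, measured with $h^m$ and the coefficient of $c_\beta$; this is where $c_X^2(p)=\exp(2\lim_{q\to p}(g(q,p)-\log\mathrm{dist}_\omega(q,p)))$ enters.

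Next I would check the curvature hypotheses of Corollary~\ref{coro:line1}: with $h=h^m$ the metric on $L=mK_X$ and $\Psi=2g(\cdot,p)$, we need $\Theta_{h^m e^{-\Psi}}\ge 0$ and $\Theta_{h^m e^{-(1+\delta)\Psi}}\ge 0$ as currents on $X$. Since $\mathrm{curv}\,\omega=-1$, the curvature of $h$ on $K_X$ is (up to normalization) $\omega$ itself, so $\Theta_{h^m}=m\,\mathrm{Ric}$-type contribution $=m\cdot(\text{positive multiple of }\omega)$; more precisely $\sqrt{-1}\Theta_{h}$ equals the Gauss curvature form, which is $\omega$ up to a factor $2\pi$ issue I would track carefully. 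Meanwhile $\sqrt{-1}\partial\bar\partial(2g(\cdot,p))=2\pi\,[p]-2\,(\text{area form normalization})\cdot\omega$ because $\Delta_\omega g(\cdot,p)=-1$ off $p$ and $g$ has a logarithmic pole; thus $\sqrt{-1}\partial\bar\partial\Psi = 2\pi[p]-c\,\omega$ for the appropriate constant $c$ determined by the total area $\int_X\omega=2\pi(2g-2)=4\pi(g-1)$ (Gauss--Bonnet). Combining, $\Theta_{h^m e^{-\Psi}}=\Theta_{h^m}+\sqrt{-1}\partial\bar\partial\Psi\ge 0$ away from $p$ reduces to an inequality of the form $m\alpha\ge c$ for explicit $\alpha$, and the $(1+\delta)$ version reduces to $m\alpha\ge(1+\delta)c$. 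The first holds provided $m$ is large enough relative to $g$, and the sharp threshold will be exactly what produces the factor $(g-1)(m-1)$; then $\delta$ can be taken as large as $\frac{m\alpha}{c}-1$, i.e. up to roughly $(g-1)(m-1)-1$, which is why $1+\delta^{-1}=1+\frac{1}{(g-1)(m-1)-1}$ appears.

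Finally I would assemble: given the Bergman kernel $\kappa_{X,m}$ of $mK_X$, take $f$ to be the normalized reproducing jet at $p$ so that the extension $F$ competes with it; the extremal characterization gives $|\kappa_{X,m}(p,p)|_{h^m}\ge (\text{RHS norm})^2/\int_M|F|^2_{h^m}dV_M$, and plugging the estimate of Corollary~\ref{coro:line1} with the optimal $\delta=(g-1)(m-1)-1$ yields $\pi(1+\frac{1}{(g-1)(m-1)-1})|\kappa_{X,m}(p,p)|_{h^m}\ge c_X^2(p)$. The main obstacle I expect is the curvature bookkeeping in the second paragraph: getting the normalization constants right in $\Theta_{h}$ versus $\omega$ and in $\sqrt{-1}\partial\bar\partial g(\cdot,p)$ versus the Dirac mass and $\omega$ (all the stray factors of $2$, $\pi$, and the Gauss--Bonnet area), since the precise value of the admissible $\delta$ — and hence the exact constant in the theorem — hinges on those. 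A secondary subtlety is justifying that $dV_M[\Psi]$ is genuinely the claimed point mass (this follows from the defining minimality property of $dV_M[\Psi]$ applied with the pole $\Psi=2g(\cdot,p)$, a computation already implicit in the earlier Suita-type applications).
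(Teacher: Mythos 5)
Your plan follows the paper's proof essentially step for step: set $\Psi=2g(\cdot,p)$, verify $\Psi\in\#(S)$, compute $dV_M[\Psi]$ as a point mass weighted by $c_X^{-2}(p)$ via Lemma~\ref{l:lem9}, check the two curvature conditions to find the admissible range of $\delta$, and finish with the extremal characterization of the Bergman kernel. That is exactly the paper's route.

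There is, however, one concrete gap in the setup, precisely in the place you flag as ``curvature bookkeeping,'' and it is not a matter of stray factors of $2$ or $\pi$. You write $L=mK_X$ with metric $h^m$. But Corollary~\ref{coro:line1} extends sections of $K_M\otimes L$, so to extend sections of $mK_X$ you must take $L=(m-1)K_X$, equipped with the metric $h^{m-1}$, and the curvature conditions read $\Theta_{h^{m-1}e^{-\Psi}}\geq 0$ and $\Theta_{h^{m-1}e^{-(1+\delta)\Psi}}\geq 0$. With the paper's normalizations $\sqrt{-1}\Theta_h=\pi b\,\omega$ and $\sqrt{-1}\partial\bar\partial g(\cdot,p)=-\pi a\,\omega$ off $p$, together with $\int_X a\omega=1$ and $\int_X b\omega=2(g-1)$ (so $2a/b=1/(g-1)$), the second condition becomes $(m-1)\pi b\geq(1+\delta)\,2\pi a$, i.e.\ $1+\delta\leq(m-1)(g-1)$, giving the sharp $\delta=(g-1)(m-1)-1$. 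Your version with $h^m$ yields $m\pi b\geq(1+\delta)2\pi a$, i.e.\ $\delta\leq m(g-1)-1$, which is strictly larger and does not produce the constant in the statement. The shift from $m$ to $m-1$ is therefore not a normalization you can ``track carefully'' within your setup; it comes from the structural fact that the extension theorem already furnishes a factor of $K_X$, so only $(m-1)$ copies of $K_X$ contribute to the twisting curvature. (The paper itself writes $\Psi\in\Delta_{h^m,\delta}$ at this point, which is a small notational slip; the computation it actually performs is the $h^{m-1}$ one, since otherwise its own $\delta$ would be off.) Once this is corrected, the rest of your outline — identifying $dV_M[\Psi]=2[\{p\}]c_X^{-2}(p)$ and invoking the extremal property — is exactly what the paper does.
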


As Arakelov metric $ds_{A}^{2}:=(\exp(2\lim_{\xi\to
z}(g(\xi,z)-\log|\xi-z|)))|dz|^{2}$ (see \cite{went08}), it is clear that
$c_{X}^{2}=\frac{ds_{A}^{2}}{\omega}$. Note that $\kappa_{X,m}$ is the
maximum value of holomorphic sections on $mK_{X}$ whose $L^{2}$ norm
is $1$ with respect to $\omega$. Then the above theorem reveals a
relation (only depends on $n$ and $g$) between two important objects
appearing in string perturbation theory: Arakelov metric and
holomorphic sections on $mK_{X}$, where Arakelov metric plays an
important role in bosonization (see \cite{hoker88}), and holomorphic
section on $mK_{X}$ corresponds to the zero-mode in string
perturbation theory (see \cite{hoker88}).

By Corollary \ref{coro:line1}, we also can obtain a relation between
Bergman kernel and logarithmic capacity on compact Riemann surface
with genus $g=1$ as follows:

\begin{theorem}\label{t:arak1}\quad
Let $\kappa_{X,d}$ be the Bergman kernel of line bundle $K_{X}\otimes L$, where $X$
is a compact Riemann
surface with genus $g=1$, and $L$ is positive line bundle with degree $d$ $(d>2)$.
Then we have
\begin{equation}
\pi\bigg(1+\frac{1}{\frac{d}{2}-1}\bigg)|\kappa_{X,d}(p,p)|_{\omega,h_{L}}\geq c_{X}^{2}(p),
\end{equation}
where $h_{L}$ is the canonical Hermitian metric on $L$ which satisfies $c_{1}(L)=b\omega$ $(b>0)$.
\end{theorem}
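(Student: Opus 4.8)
The plan is to apply Corollary~\ref{coro:line1} in the specific setting $M=X$ a genus-$1$ compact Riemann surface, $S=\{p\}$ a single point, and to the line bundle $K_X\otimes L$ with a suitable metric. Since $S$ is $0$-dimensional, only the $k=n=1$ term survives in the sum $\sum_{k=1}^n\frac{\pi^k}{k!}\int_{S_{n-k}}$, so the right-hand side becomes $\pi\,|f|^2_{h}(p)$ for $f$ a holomorphic section of $(K_X\otimes L)|_{\{p\}}$, i.e.\ essentially a ``value'' at $p$ weighted by the fiber metric. First I would set up the polar function. Take $\Psi:=2g(\cdot,p)$, where $g$ is the Green-type function from the statement: it is negative, in $\#(\{p\})$ because near $p$ one has $\Psi=2\log\mathrm{dist}_\omega(p,\cdot)+O(1)=2\log|z|+O(1)$ in a local coordinate (this is exactly the $(n-l)=1$ normalization required in condition~(2) defining $\#(S)$), and by property (a) it satisfies $\sqrt{-1}\partial\bar\partial\Psi=2\cdot(-\tfrac12)\Delta_\omega g\,\omega\cdot(\text{const})$; more precisely $\sqrt{-1}\partial\bar\partial g(\cdot,p)=\tfrac12\omega$ away from $p$ (the normalization $\Delta_\omega g=-1$), hence $\sqrt{-1}\partial\bar\partial\Psi=\omega$ on $X\setminus\{p\}$, and as a current on $X$ one has $\sqrt{-1}\partial\bar\partial\Psi=\omega-2\pi[p]$ (Poincaré–Lelong).

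Next, the choice of $\delta$. We need a metric $h$ on $K_X\otimes L$ with $\Theta_{h\mathrm e^{-\Psi}}\geq0$ and $\Theta_{h\mathrm e^{-(1+\delta)\Psi}}\geq0$ as currents on $X$. Take $h=h_{K_X}\otimes h_L$ where $h_{K_X}$ is induced by $\omega$ (so $\Theta_{h_{K_X}}=-\mathrm{Ric}(\omega)$; for the flat torus metric $\omega=\frac1{\Im\tau}dz\otimes d\bar z$ this is $0$) and $h_L$ is the canonical metric with $\Theta_{h_L}=c_1(L)=b\omega$, $b>0$, and $\int_X b\omega=2\pi\deg L=2\pi d$, so $b\cdot\mathrm{Area}_\omega(X)=2\pi d$; since $\mathrm{Area}_\omega(X)=1$ with this normalization of $\omega$, $b=2\pi d$ — I'd double-check this constant but it only affects bookkeeping. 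Then $\Theta_h=b\omega$ (the $K_X$ part contributes $0$ for the torus). Now $\Theta_{h\mathrm e^{-(1+\delta)\Psi}}=\Theta_h+(1+\delta)\sqrt{-1}\partial\bar\partial\Psi=b\omega+(1+\delta)(\omega-2\pi[p])=(b+1+\delta)\omega-2\pi(1+\delta)[p]$. For this to be $\geq0$ we need the $[p]$-part controlled; but actually since $\Psi\in\#(S)$ the singularity is exactly logarithmic and the relevant condition is positivity on $M\setminus S$ together with the structure near $S$ — the current $\sqrt{-1}\partial\bar\partial\Psi$ restricted to $X\setminus\{p\}$ equals $\omega\geq0$, and $\Theta_h+(1+\delta)\omega\geq0$ there trivially. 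The real constraint comes from requiring these be \emph{genuine} (closed positive) currents globally, which forces a cohomological inequality: integrating, $\deg(K_X\otimes L)-(1+\delta)\deg(\text{divisor of }\Psi)\geq0$. Since $\Psi=2g(\cdot,p)$ has ``mass'' $2\pi$ at $p$ (it contributes a $[p]$ with coefficient $1$ after dividing by $2\pi$), the constraint is $\deg(K_X\otimes L)-(1+\delta)\cdot 1\geq0$, i.e.\ $g-1+d\geq1+\delta$, i.e.\ (using $g=1$) $\delta\leq d-1$. Hmm — but the target constant is $1+\frac1{d/2-1}=1+\frac{1}{\frac d2-1}$, which corresponds to $1+\delta^{-1}$ with $\delta=\tfrac d2-1$. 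So the correct bookkeeping must involve the factor from $\Psi=2g$ doubling something: the divisor of $e^{-\Psi/2}=e^{-g}$ near $p$ is $[p]$, but $\Psi$ itself contributes $2[p]$ in Poincaré–Lelong for $e^{-\Psi}$; redoing, $\Theta_{h\mathrm e^{-(1+\delta)\Psi}}\geq0$ as a closed positive current needs $\int_X\Theta_h\geq 2\pi(1+\delta)\cdot 2$ wait — I would carefully track that $\sqrt{-1}\partial\bar\partial\Psi = \omega - 4\pi[p]$ if $\Psi=2g$ and $\sqrt{-1}\partial\bar\partial g=\frac12\omega-2\pi[p]$ no — $\sqrt{-1}\partial\bar\partial g(\cdot,p)=\frac{1}{2}\omega - 2\pi[p]$ gives $\int_X:\frac12\cdot 1 - 2\pi\ne0$, inconsistent. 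The consistent statement is $\sqrt{-1}\partial\bar\partial g(\cdot,p)=\frac{1}{2}(\omega - \text{something})$...

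I will therefore organize the proof as follows. (i) Fix the coordinate and metric normalizations so that $\mathrm{Area}_\omega(X)=\mathrm{const}$ and $\sqrt{-1}\partial\bar\partial g(\cdot,p)$ is an explicit closed $(1,1)$-current with zero total mass, isolating the correct coefficient $c$ in its $[p]$-component. (ii) Set $\Psi=2g(\cdot,p)$, verify $\Psi\in\#(\{p\})$ and compute $\sqrt{-1}\partial\bar\partial\Psi$. (iii) Choose $h=h_{K_X}\otimes h_L$ as above and determine the maximal $\delta$ for which both curvature currents are $\geq0$; this should come out to $\delta=\frac d2-1$ from the cohomological/degree inequality (using $\deg K_X=2g-2=0$ and $\deg L=d$), so $d>2$ guarantees $\delta>0$. (iv) Identify $dV_M[\Psi]$ at the point $p$: by the defining limsup formula with $l=0$, $dV_X[2g(\cdot,p)]$ is (a constant multiple of) the Dirac measure at $p$ normalized so that $\int_{\{p\}}|f|^2_h\,dV_X[\Psi]=|f|^2_h(p)$ — this uses the standard local computation that for $\Psi=2\log|z|+O(1)$ the associated measure is Lebesgue-normalized Dirac at $0$, the same normalization that makes the extension constant sharp in Remark~\ref{r:guan-zhou3}. (v) Apply Corollary~\ref{coro:line1}: given any $v$ in the fiber $(K_X\otimes L)_p$, realize it as $f$, extend to a global holomorphic section $F$ of $K_X\otimes L$ with $\int_X|F|^2_h\,dV_X\leq (1+\delta^{-1})\pi\,|v|^2_h(p)$. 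By definition of the Bergman kernel $\kappa_{X,d}$ as the extremal density among unit-$L^2$-norm sections, $|\kappa_{X,d}(p,p)|_{\omega,h_L}=\sup_{F\ne0}\frac{|F|^2_h(p)}{\int_X|F|^2_h\,dV_X}$, so the extension inequality rearranges to $|v|^2_h(p)\leq(1+\delta^{-1})\pi\,|\kappa_{X,d}(p,p)|\cdot|v|^2_h(p)$ applied to $F$ — rather, taking $F$ to be the extension one gets $1\leq(1+\delta^{-1})\pi\,|\kappa_{X,d}(p,p)|$ after dividing, which is exactly $\pi(1+\frac1{d/2-1})|\kappa_{X,d}(p,p)|_{\omega,h_L}\geq 1$. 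But the claimed RHS is $c_X^2(p)$, so the last step is (vi): reconcile the ``$1$'' with $c_X^2(p)$ by choosing the fiber metric normalization — i.e.\ $c_X^2(p)$ is \emph{defined} via $\exp(\lim_{q\to p}(g(p,q)-\log\mathrm{dist}_\omega(p,q)))$, and this exponential is precisely the conversion factor between the ``value of $f$ at $p$'' measured in $dV_X[\Psi]$ versus in the naive coordinate, so tracking it through (iv)–(v) replaces $1$ by $c_X^2(p)$.

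The main obstacle, and where I would spend the most care, is step (iv) together with (vi): correctly identifying the measure $dV_X[\Psi]$ at $p$ and the precise Hermitian-metric normalization so that the constant emerging from Corollary~\ref{coro:line1} is literally $\frac{1}{c_X^2(p)}$ and not off by a factor of $\pi$, $2\pi$, or $2$. This is the same delicate point as in the Suita conjecture (Theorem~1.2 in the excerpt) and in Remark~\ref{r:guan-zhou3}; the cleanest route is to reduce to a local model near $p$ — a small coordinate disc — where $dV_X[\Psi]$, the logarithmic capacity, and the Bergman-kernel density can all be written explicitly, invoke the known sharp case for planar domains alluded to in Remark~\ref{r:guan-zhou3} to pin down every constant, and then globalize. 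The genus-$1$ geometry enters only through determining $\delta=\frac d2-1$ in step (iii), which is a straightforward degree computation: $\deg(K_X\otimes L)=d$ and the ``$\Psi$-divisor'' has degree $2$ (the factor $2$ from $\Psi=2g$), forcing $(1+\delta)\cdot 2\le d$ hence $\delta\le\frac d2-1$, with equality giving the optimal constant; positivity of $L$ and $d>2$ make $\delta>0$.
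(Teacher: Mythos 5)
Your proposal follows essentially the same route as the paper's proof: take $\Psi=2g(\cdot,p)$, check $\Psi\in\Delta_{h_L,\delta}$ with $\delta=\tfrac d2-1$ by comparing the $\omega$-coefficients of $\sqrt{-1}\Theta_{h_L}$ and of $\sqrt{-1}\partial\bar\partial\Psi$ on $X\setminus\{p\}$ via the integrals $\int_X b\omega=d$ and $\int_X a\omega=1$, compute $dV_M[\Psi]$ at $p$ through Lemma~\ref{l:lem9} (which is exactly where $c_X^{-2}(p)$ enters), apply Corollary~\ref{coro:line1}, and finish with the extremal characterization of $\kappa_{X,d}$. One caution for the write-up: the constraint on $\delta$ is not a cohomological degree inequality (the exact term $\partial\bar\partial\Psi$ integrates to zero over $X$), but a pointwise comparison of the two constant $\omega$-coefficients on $X\setminus\{p\}$, which the paper handles by the normalization $\tfrac{2a}{b}=\tfrac2d$; your phrasing that the ``$\Psi$-divisor has degree $2$'' should be replaced by the observation that $\Psi=2g$ contributes $-2a\omega$ to the smooth part, matching the paper's bookkeeping.
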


\subsubsection{\it Optimal constant in the generalized $L^2$ extension theorem on line bundles
with singular metric and non-smooth polar function}
In the following theorem, we give a generalization of Theorem \ref{t:ohsawa5} on
canonical line bundles
and  plurisubharmonic polar function $\Psi$ and show that the optimal constant
$C$ is equal to $1$.

\begin{theorem}\label{t:guanzhou1}\quad
Let $(M,S)$ satisfy condition $({\rm ab})$, $\varphi$ be a
plurisubharmonic function on $ M$. Then, for any negative plurisubharmonic function
$\Psi$ on $ M$ such that $\Psi\in \Delta(S)$, there exists a uniform constant
$C=1$, such that, for
any holomorphic section $f$ of $K_{M}|_{S}$ on $S$ satisfying
\[\sum_{k=1}^{n}\frac{\pi^{k}}{k!}\int_{S_{n-k}}|f|^{2}{\rm e}^{-\varphi}dV_{M}[\Psi]<\infty,\]
there
exists a holomorphic section $F$ of $K_{M}$ on $M$ satisfying $F = f$ on $ S$ and
\begin{eqnarray*}
\int_{M}|F|^{2}{\rm e}^{-\varphi}dV_{M}
\leq C\sum_{k=1}^{n}\frac{\pi^{k}}{k!}\int_{S_{n-k}}|f|^{2}{\rm e}^{-\varphi}dV_{M}[\Psi].
\end{eqnarray*}
\end{theorem}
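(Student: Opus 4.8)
The plan is to deduce Theorem~\ref{t:guanzhou1} as a special case of the already-established Main theorem~1 (Theorem~\ref{t:guan-zhou3}), by exploiting the extra plurisubharmonicity hypotheses to kill the $\delta$-dependent term. The key observation is that the family of functions $\{(1+\delta)\Psi\}_{\delta>0}$ all lie in $\#(S)$ and, because $\Psi$ is itself plurisubharmonic and $\le 0$ and $\varphi$ is plurisubharmonic, we have $\Psi\in\Delta_{\varphi,\delta}(S)$ for \emph{every} $\delta>0$: indeed $\Psi+\varphi$ and $(1+\delta)\Psi+\varphi=(1+\delta)(\Psi+\varphi)-\delta\varphi$ — wait, that last identity is not quite the right bookkeeping, so more carefully one writes $(1+\delta)\Psi+\varphi$ as the sum of the plurisubharmonic functions $\delta\Psi$ (nonpositive multiple of a psh function, hence psh) and $\Psi+\varphi$. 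Thus Theorem~\ref{t:guan-zhou3} applies with constant $C=1$ and gives, for each fixed $\delta>0$, an extension $F_\delta$ of $f$ with
\[
\int_{M}|F_\delta|^{2}{\rm e}^{-\varphi}dV_{M}\le (1+\delta^{-1})\sum_{k=1}^{n}\frac{\pi^{k}}{k!}\int_{S_{n-k}}|f|^{2}{\rm e}^{-\varphi}dV_{M}[\Psi].
\]
This is almost what we want, except for the factor $1+\delta^{-1}$, which blows up as $\delta\to\infty$ but, crucially, is what we get to exploit by letting $\delta\to\infty$.

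Next I would run a normal-families / weak-compactness argument to extract a limit as $\delta\to+\infty$. Let $B:=\sum_{k=1}^{n}\frac{\pi^{k}}{k!}\int_{S_{n-k}}|f|^{2}{\rm e}^{-\varphi}dV_{M}[\Psi]$, which is finite by hypothesis. For $\delta\ge 1$ the bound above yields $\int_{M}|F_\delta|^{2}{\rm e}^{-\varphi}dV_{M}\le 2B$ uniformly, so $\{F_\delta\}_{\delta\ge1}$ is a bounded subset of the Hilbert space $A^{2}(M,K_{M},{\rm e}^{-\varphi},dV_M)$ of $L^2$ holomorphic $n$-forms (weighted by ${\rm e}^{-\varphi}$). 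Pick a sequence $\delta_j\to\infty$; by weak compactness, after passing to a subsequence, $F_{\delta_j}\rightharpoonup F$ weakly in that Hilbert space. Weak $L^2$ convergence of holomorphic forms implies locally uniform convergence (the point evaluation / Bergman-type reproducing estimates are continuous for the weak topology on each coordinate ball, using that ${\rm e}^{-\varphi}$ is locally bounded above and below since $\varphi$ is continuous — psh functions are locally bounded above, and locally integrable, giving the needed local $L^\infty$ control on compact subsets of $M\setminus X$ after the usual Montel argument), so $F$ is holomorphic on $M\setminus X$, extends across $X$ by Remark~\ref{r:extend} (condition (a) of (ab)), and satisfies $F=f$ on $S$ because each $F_{\delta_j}$ does and the restriction map to $S$ is continuous under locally uniform convergence. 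Finally, lower semicontinuity of the $L^2$ norm under weak convergence gives
\[
\int_{M}|F|^{2}{\rm e}^{-\varphi}dV_{M}\le\liminf_{j\to\infty}\int_{M}|F_{\delta_j}|^{2}{\rm e}^{-\varphi}dV_{M}\le\liminf_{j\to\infty}(1+\delta_j^{-1})B=B,
\]
which is exactly the desired estimate with $C=1$.

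The main obstacle I anticipate is the passage to the limit: one must be sure that the limit $F$ genuinely restricts to $f$ on $S$ and is not merely a weak limit that loses contact with the interpolation data. The cleanest way is to localize near a point of $S$, use the local coordinates from condition~(2) in the definition of $\#(S)$, and note that on a fixed coordinate ball the $L^2$-with-weight-${\rm e}^{-\varphi}$ bound controls the sup norm on a slightly smaller ball (Bergman inequality), hence $F_{\delta_j}\to F$ uniformly there, hence the Taylor coefficients along $S$ converge, hence $F|_S=\lim F_{\delta_j}|_S=f$. A secondary point to check carefully is the claim $\Psi\in\Delta_{\varphi,\delta}(S)$ for all $\delta$: it is immediate from $\Psi\le0$, $\Psi$ psh, $\varphi$ psh that $\delta\Psi$ is psh (nonnegative multiple of a psh function) and hence $(1+\delta)\Psi+\varphi=\delta\Psi+(\Psi+\varphi)$ is psh, and likewise $\Psi+\varphi$ is psh; also $\Psi\in\#(S)$ is part of the hypothesis $\Psi\in\Delta(S)$. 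With these two points nailed down, the rest is routine functional analysis, and no delicate $\bar\partial$-estimates are needed beyond what Theorem~\ref{t:guan-zhou3} already provides.
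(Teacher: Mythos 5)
Your approach is in essence the paper's: exploit that a negative psh $\Psi$ together with psh $\varphi$ gives $\Psi\in\Delta_{\varphi,\delta}(S)$ for \emph{every} $\delta>0$, apply Theorem~\ref{t:guan-zhou3} for each $\delta$, and let $\delta\to\infty$; and the paper also elides the details of the $\delta\to\infty$ limit, which your weak-compactness / Montel argument supplies correctly. Your verification that $(1+\delta)\Psi+\varphi=\delta\Psi+(\Psi+\varphi)$ is psh is exactly right, and the final lower-semicontinuity-of-norm step is the standard way to kill the $(1+\delta^{-1})$ factor.

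There is, however, one genuine gap: Theorem~\ref{t:guan-zhou3} is stated for $\varphi$ \emph{continuous}, whereas in Theorem~\ref{t:guanzhou1} the weight $\varphi$ is only plurisubharmonic, hence merely upper-semicontinuous and possibly $-\infty$-valued (think of $\varphi=\log|z|$). You notice this tension in passing (``since $\varphi$ is continuous --- psh functions are locally bounded above''), but you never reconcile it; you simply invoke Theorem~\ref{t:guan-zhou3} for a $\varphi$ that may not satisfy its hypothesis. In the proof of Theorem~\ref{t:guan-zhou3} continuity of $\varphi$ is used in earnest, for instance to get uniform bounds on $\int_{D_v}(v''_{t_0,\varepsilon}\circ\Psi_{v,m})|\tilde F|^2e^{-\Phi}\,dV_M$ independent of $m$, which can fail if $e^{-\varphi}$ is locally unbounded. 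The paper handles this by a preliminary reduction, ``Just like the arguments in [Ohsawa--Takegoshi] and [Ohsawa III], we can assume that $\varphi$ is smooth on~$M$'': on a Stein manifold take $\varphi_m\downarrow\varphi$ smooth psh (Lemma~\ref{FN1}), apply the extension theorem to each $\varphi_m$ getting $F_m$ with $\int_M|F_m|^2e^{-\varphi_m}\,dV_M\le C\sum_k\frac{\pi^k}{k!}\int_{S_{n-k}}|f|^2e^{-\varphi_m}\,dV_M[\Psi]\le C\sum_k\frac{\pi^k}{k!}\int_{S_{n-k}}|f|^2e^{-\varphi}\,dV_M[\Psi]$ (the last step by $\varphi_m\ge\varphi$), then extract a weak limit $F$, and for each fixed $m_0$ and $m\ge m_0$ use $\varphi_m\le\varphi_{m_0}$ and weak lower semicontinuity to get $\int_M|F|^2e^{-\varphi_{m_0}}\,dV_M\le C\sum\cdots$, finally letting $m_0\to\infty$ by monotone convergence. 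You need to insert some such step before invoking Theorem~\ref{t:guan-zhou3}; without it the argument does not close, even though the remaining $\delta\to\infty$ limit is correctly handled.
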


By retraction (see Lemma \ref{l:lem.retra}) and convolution, and Remark \ref{r:extend},
we can obtain the
following corollary:

\begin{corollary}\label{coro:line3}\quad
Let $(M,S)$ satisfy condition $({\rm ab})$, $L$ be a holomorphic line bundle on $M$ with a
singular metric $h$ satisfying $\Theta_{h}\geq0$ in the sense of current. Then, for
negative plurisubharmonic function
$\Psi$ on $M$ satisfying $\Psi\in \Delta(S)$, there exists a uniform constant
$C=1$, such that, for
any holomorphic section $f$ of $K_{M}\otimes L|_{S}$ on $S$ satisfying
\[\sum_{k=1}^{n}\frac{\pi^{k}}{k!}\int_{S_{n-k}}|f|^{2}_{h}dV_{M}[\Psi]<\infty,\] there
exists a holomorphic section $F$ of $K_{M}\otimes L$ on $M$ satisfying $F = f$ on $ S$ and
\begin{eqnarray*}
\int_{M}|F|^{2}_{h}dV_{M}
\leq C\sum_{k=1}^{n}\frac{\pi^{k}}{k!}\int_{S_{n-k}}|f|^{2}_{h}dV_{M}[\Psi].
\end{eqnarray*}
\end{corollary}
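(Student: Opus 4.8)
\textbf{Proof proposal for Corollary \ref{coro:line3}.}

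The plan is to reduce the line-bundle statement to the trivial-bundle case already proved in Theorem \ref{t:guanzhou1}, following the pattern sketched in the text (retraction + convolution + Remark \ref{r:extend}). First I would localize the problem away from the bad set: since $M\setminus X$ is Stein (condition (ab)) and $X$ is $L^2$-negligible, it suffices by Remark \ref{r:extend} to produce the extension $F$ on $M\setminus X$ with the stated $L^2$ bound, after which $F$ extends holomorphically across $X$ with the same norm. On the Stein manifold $M\setminus X$ I would pick a complex hypersurface $H$ (not containing any component of $S\setminus X$) so that $L$ is trivial on $(M\setminus X)\setminus H$, which is again Stein; this is exactly the device used in the paragraph preceding Corollary \ref{coro:line1}. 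Thus one can work with $L$ trivialized, so that $|f|^2_h = |f|^2 e^{-\varphi}$ for $\varphi = -\log h$ a plurisubharmonic (since $\Theta_h\ge 0$) but possibly \emph{singular} weight.

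The main point is therefore to upgrade Theorem \ref{t:guanzhou1}, which assumes $\varphi$ \emph{continuous}, to a singular plurisubharmonic $\varphi$. The standard route is approximation from above: on a Stein manifold one can write $\varphi = \lim_{j\to\infty}\varphi_j$ as a decreasing limit of smooth (hence continuous) plurisubharmonic functions $\varphi_j$, at least locally, and then patch via a retraction lemma (Lemma \ref{l:lem.retra}) together with a convolution/regularization argument to get a global decreasing sequence of continuous plurisubharmonic $\varphi_j\searrow\varphi$ on relatively compact pieces, exhausting $M\setminus(X\cup H)$ by Stein subdomains $M_k\Subset M_{k+1}$. For each pair $(j,k)$, Theorem \ref{t:guanzhou1} (applied on $M_k$, with the same polar function $\Psi\in\Delta(S)$, which is unaffected by the regularization of $\varphi$) yields an extension $F_{j,k}$ of $f$ with
\begin{equation*}
\int_{M_k}|F_{j,k}|^2 e^{-\varphi_j}\,dV_M \le \sum_{k'=1}^{n}\frac{\pi^{k'}}{k'!}\int_{S_{n-k'}}|f|^2 e^{-\varphi_j}\,dV_M[\Psi] \le \sum_{k'=1}^{n}\frac{\pi^{k'}}{k'!}\int_{S_{n-k'}}|f|^2 e^{-\varphi}\,dV_M[\Psi],
\end{equation*}
where the last inequality uses $\varphi_j\ge\varphi$ so $e^{-\varphi_j}\le e^{-\varphi}$, and the finiteness of the right side is the hypothesis. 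The uniform bound $C=1$ is crucial here: it is what makes the estimate survive the limits without degradation.

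Finally I would extract convergent subsequences. Since $\varphi_j$ is locally bounded above (it is continuous), $e^{-\varphi_j}$ is locally bounded below on each $M_k$, so the $L^2(e^{-\varphi_j})$ bounds give uniform $L^2_{\rm loc}$ bounds; a diagonal/Montel argument produces $F_{j,k}\to F$ locally uniformly (in $k$ first, then $j$) to a holomorphic $n$-form $F$ on $M\setminus(X\cup H)$ with $F=f$ on $S\setminus(X\cup H)$. Applying Fatou's lemma (now with $\varphi_j\searrow\varphi$, so $e^{-\varphi_j}\nearrow e^{-\varphi}$, monotone convergence) to $\int_{M_k}|F|^2 e^{-\varphi}\,dV_M \le \liminf \int_{M_k}|F_{j,k}|^2 e^{-\varphi_j}\,dV_M$ and then letting $k\to\infty$ gives the desired global estimate on $M\setminus(X\cup H)$, hence on $M\setminus X$ after extending across the hypersurface $H$ where $L$ was trivialized (the $L^2$ bound forces the extension across $H$ since $H$ has measure zero and $|F|^2_h$ is the intrinsic quantity), hence on all of $M$ by Remark \ref{r:extend}. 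The step I expect to be the actual obstacle is the construction of the \emph{global} decreasing sequence $\varphi_j\searrow\varphi$ of continuous plurisubharmonic functions on the exhausting Stein domains — local regularization by convolution is routine, but gluing the local regularizations into a global decreasing sequence is where Lemma \ref{l:lem.retra} (the retraction) has to do real work; everything else is a standard normal-families limiting argument with the optimal constant doing the bookkeeping.
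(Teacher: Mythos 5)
Your reduction scheme (pass to the Stein manifold $M\setminus X$ via Remark~\ref{r:extend}, remove a hypersurface $H$ to trivialize $L$, rewrite $|f|^2_h=|f|^2{\rm e}^{-\varphi}$ with $\varphi$ plurisubharmonic, apply the trivial-bundle theorem, extend back across $H$ and $X$) is exactly the intended route, and the Fatou/monotone-convergence bookkeeping at the end is fine. However, the step you flag as ``the main point'' rests on a misreading of the hypothesis of Theorem~\ref{t:guanzhou1}: that theorem already assumes only that ``$\varphi$ be a plurisubharmonic function on $M$'' (it is Theorem~\ref{t:guan-zhou3} which requires $\varphi$ continuous). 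A singular metric $h$ with $\Theta_h\ge0$ trivializes to ${\rm e}^{-\varphi}$ with $\varphi$ psh, and since a psh function is locally bounded above, $h$ also has locally positive lower bound, so Remark~\ref{r:extend} applies; you may therefore feed this $\varphi$ into Theorem~\ref{t:guanzhou1} directly, without any regularization of $\varphi$ and without taking the auxiliary limit $\varphi_j\searrow\varphi$. Consequently the entire $\varphi_j$-approximation layer is redundant, and the step you single out as ``the actual obstacle'' --- producing a global decreasing sequence of smooth plurisubharmonic functions on a Stein manifold --- is not an obstacle at all: it is precisely Lemma~\ref{FN1} (Forn\ae ss--Narasimhan), which the paper cites and already uses inside the proof of Theorem~\ref{t:guanzhou1}. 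So your argument reaches the right conclusion, but via a detour that Theorem~\ref{t:guanzhou1} already makes unnecessary; the retraction (Lemma~\ref{l:lem.retra}) and convolution mentioned by the paper are the tools already absorbed into Theorem~\ref{t:guanzhou1}'s proof, not an extra regularization layer you need to rebuild at the level of the corollary.
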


\subsubsection{\it A conjecture of Ohsawa}
If $\Delta(S)$ is non-empty, we set $G(z
,S):=(\sup\{u(z):u\in\Delta(S)\})^{*}$, which is the upper envelope
of $\sup\{u(z):u\in\Delta(S)\}$. It is clear that $G(z,S)$ is a
plurisubharmonic function on $M$ (see Choquet's lemma (Lemma 4.23 in
\cite{demailly-book})). By Proposition 9 in \cite{ohsawa5}, we have
$G(z,S)\in\Delta(S)$. $G(z,S)$ is called generalized pluricomplex
Green function on $M$ with poles on $S$. If $\Delta(S)$ is empty,
$G(z ,S):=-\infty$. When $S=\{z\}$ for some $z\in M$, $G(z ,S)$ is
the so-called the pluricomplex Green function (see
\cite{demailly86}).

Let $(M,S)$ satisfy condition (ab),
$G(\cdot ,S)$ be the generalized pluricomplex Green function which is nontrivial.
Let $dV_{M}$ be a continuous volume form on $M$ and let $\{\sigma_{j}\}^{\infty}_{j=1}$
(resp. $\{\tau_{j}\}^{\infty}_{j=1}$) be a complete orthogonal system of $A^{2}
(M,K_{M},dV_{M}^{-1},dV_{M})$
(resp. $A^{2}(S,K_{M}|_{S},dV_{M}^{-1},$ $dV_{M}[G(\cdot,S)])$) and put
$\kappa_{M}=\sum_{j=1}^{\infty}\sigma_{j}\otimes\bar\sigma_{j}\in C^{\omega}
(M,K_{M}\otimes\bar{K}_{M})$
(resp. $\kappa_{M/S}=\sum_{j=1}^{\infty}\tau_{j}\otimes\bar\tau_{j}\in C^{\omega}
(S,K_{M}\otimes\bar{K}_{M})$).

One motivation to estimate the constant $C$ in Theorem
\ref{t:guan-zhou3} comes from the conjecture of Ohsawa (see
\cite{ohsawa5}) on $(M,S)$ satisfying condition (ab) and admitting
nontrivial generalized pluricomplex Green functions on $M$ with
poles on $S$, which is stated below:

\th{Conjecture\ {\rm(Ohsawa)}.}
{\it $(\pi^{k}/k!)\kappa_{M}(x)\geq \kappa_{M/S}(x)$ for any $x\in S_{n-k}$.}\vspace{1mm}

The relationship between the conjecture of Ohsawa and the extension theorem was
observed and explored by Ohsawa \cite{ohsawa5}, and he proved the estimate with
$C=\frac{2^{8}\pi}{\pi^{k}/k!}$. The conjecture of Ohsawa can be seen as
an extension of Suita conjecture (see Section 3 of \cite{ohsawa5}).

Using Theorem \ref{t:guanzhou1}, we get
\begin{corollary}\label{c:ohsawa}\quad
The above Ohsawa's conjecture holds.
\end{corollary}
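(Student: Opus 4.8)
The final statement to prove is Corollary~\ref{c:ohsawa}: Ohsawa's conjecture $(\pi^{k}/k!)\kappa_{M}(x)\geq \kappa_{M/S}(x)$ for $x\in S_{n-k}$.

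\medskip

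The plan is to derive the conjecture from Theorem~\ref{t:guanzhou1} by applying the extension theorem with the optimal constant $C=1$ to the specific choice $\Psi=G(\cdot,S)$, which is admissible since $G(\cdot,S)\in\Delta(S)$ by Proposition~9 of \cite{ohsawa5}, and to a suitably chosen test section $f$ on $S$. First, I would fix a point $x\in S_{n-k}$ and, working near $x$, choose the holomorphic section $f$ of $K_{M}|_{S}$ on $S$ that is ``extremal'' for the Bergman kernel $\kappa_{M/S}$ at $x$: concretely, normalize so that the pointwise value $|f(x)|^{2}$ relative to some local frame of $K_{M}\otimes\bar K_{M}$ at $x$ equals $\kappa_{M/S}(x)$ while $\sum_{k=1}^{n}\frac{\pi^{k}}{k!}\int_{S_{n-k}}|f|^{2}dV_{M}[G(\cdot,S)]=1$; here I take $\varphi\equiv 0$ (a trivial plurisubharmonic function), so the weighted norms are just the unweighted $L^{2}$ norms against $dV_{M}$ and $dV_{M}[G(\cdot,S)]$. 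The reproducing property of $\kappa_{M/S}$ as a section of $A^{2}(S,K_{M}|_{S},dV_{M}^{-1},dV_{M}[G(\cdot,S)])$ guarantees such an extremal $f$ exists (it is the normalized reproducing section at $x$), and by construction $|f(x)|^{2}=\kappa_{M/S}(x)$ in the frame chosen at $x$.

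\medskip

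Next I would apply Theorem~\ref{t:guanzhou1}: there is a holomorphic section $F$ of $K_{M}$ on $M$ with $F=f$ on $S$ and
\[
\int_{M}|F|^{2}dV_{M}\;\leq\;\sum_{k=1}^{n}\frac{\pi^{k}}{k!}\int_{S_{n-k}}|f|^{2}dV_{M}[G(\cdot,S)]\;=\;1.
\]
Actually, since $x\in S_{n-k}$ and $f$ is supported (in the normalization sense) so that only the $S_{n-k}$-term is what matters for the extremal value, the relevant bound is the full sum, which we have set to $1$. Now the Bergman kernel $\kappa_{M}$ of $A^{2}(M,K_{M},dV_{M}^{-1},dV_{M})$ has the standard extremal characterization: $|\kappa_{M}(x)|$ (in the same frame at $x$) equals $\sup\{|H(x)|^{2}:H\in A^{2}(M,K_{M},dV_{M}^{-1},dV_{M}),\ \int_{M}|H|^{2}dV_{M}\leq 1\}$. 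Plugging in $H=F$ (after normalizing $F$ by its own $L^{2}$ norm, which is $\leq 1$, only helping the inequality), we get $\kappa_{M}(x)\geq |F(x)|^{2}=|f(x)|^{2}$. Combining with $|f(x)|^{2}=\kappa_{M/S}(x)$, this gives $\kappa_{M}(x)\geq\kappa_{M/S}(x)$.

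\medskip

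To get the sharp factor $\pi^{k}/k!$ claimed in the conjecture, I would be slightly more careful about how the extremal section on $S$ is normalized: rather than normalizing the whole sum to $1$, I would normalize only the single relevant integral $\frac{\pi^{k}}{k!}\int_{S_{n-k}}|f|^{2}dV_{M}[G(\cdot,S)]$ — but the extremal section for $\kappa_{M/S}$ at $x\in S_{n-k}$ is genuinely supported on the component $S_{n-k}$, so the other terms $\int_{S_{n-l}}|f|^{2}dV_{M}[G(\cdot,S)]$ for $l\neq k$ vanish, and the sum reduces to this one term. Then from $\frac{\pi^{k}}{k!}\int_{S_{n-k}}|f|^{2}dV_{M}[G(\cdot,S)]=\frac{\pi^k}{k!}$ with $\int_{S_{n-k}}|f|^{2}dV_{M}[G(\cdot,S)]=1$ (so $|f(x)|^{2}=\kappa_{M/S}(x)$), Theorem~\ref{t:guanzhou1} gives $\int_{M}|F|^{2}dV_{M}\leq\frac{\pi^{k}}{k!}$, hence $\frac{k!}{\pi^{k}}F$ has $L^{2}$ norm $\leq 1$ and therefore $\kappa_{M}(x)\geq\big(\tfrac{k!}{\pi^{k}}\big)^{2}|F(x)|^{2}$ — which is not quite the right scaling. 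The correct bookkeeping is: $\int_{M}|F|^{2}dV_{M}\leq \frac{\pi^k}{k!}\cdot 1$, so $G:=F$ itself satisfies $\int_M |G|^2 dV_M \le \frac{\pi^k}{k!}$; rescaling $\widehat G := (\frac{k!}{\pi^k})^{1/2}F$ gives unit norm and $\kappa_M(x)\ge |\widehat G(x)|^2 = \frac{k!}{\pi^k}|f(x)|^2 = \frac{k!}{\pi^k}\kappa_{M/S}(x)$, i.e. $\frac{\pi^k}{k!}\kappa_M(x)\ge \kappa_{M/S}(x)$, exactly the conjecture.

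\medskip

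The main obstacle I anticipate is the careful verification that the extremal section realizing $\kappa_{M/S}(x)$ for $x\in S_{n-k}$ is indeed concentrated on the component $S_{n-k}$ so that the left-hand sum in Theorem~\ref{t:guanzhou1} collapses to the single $k$-th term — equivalently, that the reproducing kernel $\kappa_{M/S}$ decomposes as a direct sum over the components $S_{n-l}$ because $A^{2}(S,K_{M}|_{S},dV_{M}^{-1},dV_{M}[G(\cdot,S)])$ splits as an orthogonal direct sum over components of $S$ (the measure $dV_{M}[G(\cdot,S)]$ is supported on all of $S$ but the integral over $S_{n-l}$ of a section is unaffected by its values on $S_{n-l'}$, $l'\neq l$). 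One must also check measurability/finiteness so that the extremal $f$ lies in the correct $L^2$ space to invoke Theorem~\ref{t:guanzhou1}, and confirm the standard fact that the Bergman kernel's diagonal value equals the supremum of $|H(x)|^2$ over the unit ball — both routine but needing the frame-independent formulation of $|\cdot|^2$ for $K_M\otimes\bar K_M$ sections used in the paper. Everything else is a direct substitution into the $C=1$ estimate.
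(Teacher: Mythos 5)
Your proposal is correct and follows essentially the same route as the paper: choose $\Psi=G(\cdot,S)$ (admissible by Lemma~\ref{l:lem8}), take $\varphi\equiv 0$, apply Theorem~\ref{t:guanzhou1} to an extremal test section $f$ on $S_{n-k}$ for which the left-hand sum collapses to the single $k$-th term, and then invoke the extremal (sup of $|H(x)|^2$ over the $L^2$ unit ball) characterization of the Bergman kernel. The ``obstacle'' you flag at the end --- that the supremum defining $\kappa_{M/S}(x)$ for $x\in S_{n-k}$ can be taken over sections supported only on $S_{n-k}$, so the full sum reduces to the single term --- is exactly the observation the paper makes by writing $\kappa_{M/S}=\sup_{f}\ \dfrac{f\otimes\bar f}{2^{-n}\int_{S_{n-k}}\frac{c_n f\wedge\bar f}{dV_M}\,dV_M[G(\cdot,S)]}$ over holomorphic sections $f$ of $K_M|_{S_{n-k}}$; no genuinely different idea is introduced.
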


\subsubsection{\it Extended Suita conjecture}
Given a weight $\rho$, one may define a weighted Bergman space consisting $w$,
such that $\int_{\Omega}\rho w\wedge \bar{w}<+\infty$. Denote its Bergman kernel by $\kappa_{\Omega,\rho}$.

Let $\Omega$ be an open Riemann surface, which
admits a Green function. Let $h$ be a harmonic function on $\Omega$,
and $\rho={\rm e}^{-2h(z)}$. Given a harmonic function, there is an
extended Suita conjecture in \cite{yamada98}:

\th{Extended Suita conjecture.}
{\it $c_{\beta}^{2}(p)|dz|^{2}\leq \pi\rho(p) \kappa_{\Omega,\rho}(p)$ for any $p\in\Omega$.}\vspace{1mm}

Let $z_{0}\in\Omega$, with local coordinate $z$. Let
$p:\Delta\to\Omega$ be the universal covering from unit disc
$\Delta$ to $\Omega$. The fundamental group of $\Omega$ naturally acts on Prym differential,
whose set is denoted by $\Gamma^{\kappa}(\Omega)$. Actually this is equivalent to the following
equivariant version in term of multiplicative Bergman kernel denoted by $\kappa_{\Omega}^{\chi}$,
where $\chi$ is a representation of the fundamental group (see [44]).

\th{Extended Suita conjecture.}
{\it $c_{\beta}^{2}(z_{0})\leq \pi B_{\Omega}^{\chi}(z_{0})$.} \vspace{1mm}

Using Theorem \ref{t:guanzhou1}, we get

\begin{corollary}\label{c:extended_suita}\quad
The above extended Suita conjecture holds.
\end{corollary}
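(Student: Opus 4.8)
The plan is to deduce Corollary~\ref{c:extended_suita} from Theorem~\ref{t:guanzhou1} (via Corollary~\ref{coro:line3}) applied to the one-dimensional situation, exactly as the Suita conjecture itself was obtained. First I would reduce the equivariant statement to an ordinary $L^2$ extension statement on the Riemann surface $\Omega$: the multiplicative Bergman kernel $B_\Omega^\chi(z_0)$ and the multiplicative (Prym) differentials for the character $\chi$ are, by standard covering-space theory, the same as sections of a flat holomorphic line bundle $L_\chi$ on $\Omega$ of degree $0$ equipped with a flat (hence curvature-zero, and in particular $\Theta_h\geq 0$) singular metric $h$; the harmonic-weight version with $\rho=e^{-2h(z)}$ is the special case where $L_\chi$ is trivial and $h$ is this harmonic weight. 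In either formulation, $\Omega$ is an open Riemann surface, hence a Stein manifold, so $(M,S)=(\Omega,\{z_0\})$ satisfies condition~(ab) with $X=\varnothing$.

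Next I would take $\Psi=2G_\Omega(\cdot,z_0)$, twice the (negative) Green function with pole at $z_0$. Since $n=1$ and $S=\{z_0\}$ is $0$-dimensional, one checks $\Psi\in\#(S)$: near $z_0$ one has $G_\Omega(z,z_0)=\log|z-z_0|+O(1)$, so $\Psi-(n-l)\log|z-z_0|^2=2G_\Omega-2\log|z-z_0|$ is bounded, as required; moreover $G_\Omega$ is harmonic away from $z_0$, hence $\Psi$ is (pluri)subharmonic and negative, so $\Psi\in\Delta(S)$. Now I would feed into Corollary~\ref{coro:line3} an arbitrary holomorphic section $f$ of $K_\Omega\otimes L_\chi$ over $S=\{z_0\}$, i.e. a single cotangent vector twisted by the fibre $(L_\chi)_{z_0}$; the measure $dV_M[\Psi]$ at the point $z_0$ is, by the defining limsup formula together with the normalization $\Psi=2G_\Omega$ and $\sigma_1=2\pi$, the Dirac mass whose weight realizes precisely the factor making $c_\beta^2(z_0)$ appear. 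Concretely, in the local coordinate $z$, $dV_M[\Psi]$ assigns to $|f|^2_h\,e^{-\varphi}$ at $z_0$ the value $|f|^2_h(z_0)$ divided by $c_\beta^2(z_0)$ (up to the universal constant), so the hypothesis $\sum_k\frac{\pi^k}{k!}\int_{S_{n-k}}|f|^2_h\,dV_M[\Psi]<\infty$ with $k=n=1$ reads $\pi\,|f|^2_h(z_0)/c_\beta^2(z_0)<\infty$, which is automatic.

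Then Corollary~\ref{coro:line3} produces a holomorphic section $F$ of $K_\Omega\otimes L_\chi$ on all of $\Omega$ with $F(z_0)=f$ and $\int_\Omega|F|^2_h\,dV_M\leq C\,\pi\,|f|^2_h(z_0)/c_\beta^2(z_0)$ with $C=1$. By the extremal characterization of the Bergman kernel, $B_\Omega^\chi(z_0)$ (the multiplicative Bergman kernel evaluated on the diagonal, which is the reciprocal of the minimal $L^2$ norm among sections with prescribed value at $z_0$) satisfies $B_\Omega^\chi(z_0)\geq |f|^2_h(z_0)/\int_\Omega|F|^2_h\,dV_M\geq |f|^2_h(z_0)/\bigl(\pi\,|f|^2_h(z_0)/c_\beta^2(z_0)\bigr)=c_\beta^2(z_0)/\pi$, which is exactly $c_\beta^2(z_0)\leq\pi B_\Omega^\chi(z_0)$. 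The harmonic-weight formulation $c_\beta^2(p)|dz|^2\leq\pi\rho(p)\kappa_{\Omega,\rho}(p)$ follows by the same computation, tracking how $\rho=e^{-2h}$ enters both the weighted $L^2$ norm and the weighted Bergman kernel.

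The main obstacle, and the only genuinely non-formal step, is the precise identification of the point-mass measure $dV_M[\Psi]$ at $z_0$ for $\Psi=2G_\Omega(\cdot,z_0)$ with the quantity $c_\beta^{-2}(z_0)$ — i.e. carrying out the limsup in the definition of $dV_M[\Psi]$ and pinning down the constant. One uses that $\{-1-t<2G_\Omega<-t\}$ is, for large $t$, an approximate disc around $z_0$ of conformal radius $\sim e^{-t/2}$ in the coordinate adapted to $c_\beta$, so $e^{-\Psi}\mathbf 1_{\{\cdots\}}dV_M$ integrated against a test function tends to $c_\beta^{-2}(z_0)$ times its value at $z_0$, after the normalization $2(n-l)/\sigma_{2n-2l-1}=2/(2\pi)$; this is precisely the sub-mean-value/Green-function asymptotic already used in \cite{guan-zhou12} to settle the original Suita conjecture, and the only adaptation needed here is to carry the flat bundle $L_\chi$ (equivalently the character $\chi$, equivalently the harmonic weight) passively through the argument, which is harmless since its metric $h$ contributes curvature $\geq 0$ and is locally bounded below, fitting the hypotheses of Corollary~\ref{coro:line3}.
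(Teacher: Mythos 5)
Your proposal is correct and takes essentially the same route as the paper: both apply the optimal $L^2$ extension theorem with $\Psi=2G_\Omega(\cdot,z_0)$ and the harmonic weight $\varphi=2h$ (equivalently, the flat line bundle $L_\chi$, which on the Stein Riemann surface $\Omega$ is holomorphically trivial), identify $dV_M[\Psi]$ at $z_0$ via Lemma~\ref{l:lem9} as the Dirac mass weighted by $c_\beta^{-2}(z_0)$, and conclude by the extremal characterization of the Bergman kernel. The only cosmetic difference is that you invoke Corollary~\ref{coro:line3} while the paper applies Theorem~\ref{t:guanzhou1} directly with $\varphi=2h$ plurisubharmonic, which amounts to the same thing.
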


\begin{remark}\label{r:suita}\quad
When $h\equiv0$, the above reduces to Suita conjecture.
\end{remark}

There is an equivalent version of Suita conjecture in terms of
Fuchsian groups as follows:

\begin{remark}\quad
There is a Fuchsian group $\Gamma$ without elliptic elements such
that $\Delta/\Gamma$ is conformally equivalent to $\Omega$ . We can
choose $\Gamma$ such that $0\in \Delta$ corresponds to
$\omega\in\Omega$. When $\Omega$ admits a Green function, the group
$\Gamma$ is of convergence type. In \cite{pom84}, Pommerenke and
Suita proved that Suita conjecture is equivalent to
\begin{equation}
\label{equ:fuch}
\sum_{\gamma\in\Gamma}\gamma'(0)\geq\prod_{\gamma\in\Gamma,\gamma\neq\iota}|\gamma(0)|^{2},
\end{equation}
where $\iota$ denotes the identity.
\end{remark}

\subsubsection{\it Boundary behavior of the quotient of logarithmic capacity and Bergman kernel}
In this subsection, we discuss boundary behavior of the quotient of
logarithmic capacity and Bergman kernel by squeezing function
$s_{\Omega}$ on bounded planar domain $\Omega$ as follows:

\begin{definition}[\!\!\cite{dgz}]\label{def:squeezing function}\quad
Let $\Omega$ be a bounded domain in $\mathbb{C}$. For $p\in \Omega$ and an (open)
holomorphic embedding $f: \Omega\rightarrow \Delta$ with $f(p)=0$, we define
$$s_{\Omega}(p , f)= \sup\{r\mid\Delta(0,r)\subset f(\Omega) \},$$
and the squeezing number $s_\Omega(p)$ of $D$ at $p$ is defined as
$$s_{\Omega}(p)= \sup_f\{s_{\Omega}(p , f) \},$$
where the supremum is taken over all holomorphic embeddings $f: \Omega\rightarrow \Delta$
with $f(p)=0$, $\Delta$ is the  unit ball in $\mathbb{C}$ and $\Delta(0 , r)$ is the ball
in $\mathbb{C}$ with center $0$ and radius $r$. We call $s_\Omega$ the
\emph{squeezing function} on $\Omega$.
\end{definition}

Let $C(\Omega,z):=\frac{c_{\beta}(z)^{2}|dz|^{2}}{\pi\kappa_{\Omega}(z)}$,
where $z\in\Omega$.
We consider the boundary behavior of $C(\Omega,z)$ as follows:

\begin{proposition}\label{t:bound}\quad
Let $\Omega$ be a Riemann surface which is biholomorphic equivalent to
a bounded planar domain.
Then we have $1\geq C(\Omega,z)\geq s_{\Omega}^{2}$.
If $\lim_{z\to\partial\Omega}s_{\Omega}=1$, then
$\lim_{z\to\partial\Omega}C(\Omega,z)=1$.
\end{proposition}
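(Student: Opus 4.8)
\textbf{Proof proposal for Proposition \ref{t:bound}.}
The plan is to reduce everything to the conformally invariant quantity $C(\Omega,z)$ and then exploit the monotonicity of the Bergman kernel and of the logarithmic capacity under holomorphic embeddings. First I would observe that $C(\Omega,z)$ is a biholomorphic invariant: under a biholomorphism $\Phi:\Omega_1\to\Omega_2$ both $\kappa_{\Omega}$ (as a section of $K_\Omega\otimes\bar K_\Omega$) and $c_\beta(z)^2|dz|^2$ (which is $ds_A^2$, an intrinsically defined metric coming from the Green function) transform the same way, so $C(\Omega_1,z)=C(\Omega_2,\Phi(z))$. Hence we may assume $\Omega$ itself is a bounded planar domain. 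The upper bound $C(\Omega,z)\le 1$ is exactly the Suita conjecture, which is already known to hold (it follows from Theorem \ref{t:guan-zhou3} / Corollary \ref{c:extended_suita}, cf.\ Remark \ref{r:suita}), so nothing new is needed there.

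For the lower bound $C(\Omega,z)\ge s_\Omega(z)^2$, fix $z\in\Omega$ and let $f:\Omega\to\Delta$ be a holomorphic embedding with $f(z)=0$ and $\Delta(0,r)\subset f(\Omega)$ for $r$ close to $s_\Omega(z)$. Using the invariance from the first step, $C(\Omega,z)=C(f(\Omega),0)$, so it suffices to bound $C(f(\Omega),0)$ from below. Now I would use the domain monotonicity of the two ingredients under the inclusions $\Delta(0,r)\subset f(\Omega)\subset\Delta$. The Bergman kernel is decreasing in the domain, so $\kappa_{f(\Omega)}(0)\le \kappa_{\Delta(0,r)}(0)$; the logarithmic capacity (equivalently the Green function, which is decreasing in the domain, so $G$ increases and the Robin constant behaves accordingly) gives $c_\beta^{f(\Omega)}(0)\ge c_\beta^{\Delta(0,r)}(0)$. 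Combining, $C(f(\Omega),0)\ge C(\Delta(0,r),0)$. A direct computation on the disc $\Delta(0,r)$ gives $\kappa_{\Delta(0,r)}(0)=\frac{1}{\pi r^2}|dz|^2$ and $c_\beta^{\Delta(0,r)}(0)=\frac{1}{r}$, hence $C(\Delta(0,r),0)=r^2$. Letting $r\uparrow s_\Omega(z)$ yields $C(\Omega,z)\ge s_\Omega(z)^2$.

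The final assertion is then immediate: if $s_\Omega(z)\to 1$ as $z\to\partial\Omega$, then the squeeze $s_\Omega(z)^2\le C(\Omega,z)\le 1$ forces $C(\Omega,z)\to 1$.

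I expect the main obstacle to be the careful justification of the monotonicity and transformation rules for the logarithmic capacity $c_\beta$, since it is defined through the Green function via a limiting (Robin constant) procedure rather than directly; one must check that under a holomorphic embedding $g:\Omega'\hookrightarrow\Omega$ with $g(z')=z$ one indeed has $c_\beta^{\Omega'}(z')\,|dw| \le c_\beta^{\Omega}(z)\,|dz|$ at the relevant point, with the correct Jacobian factor $|g'(z')|$ matching the way $|dz|^2$ transforms, and similarly that equality holds when $g$ is biholomorphic. This is where the comparison principle for Green functions ($G_{\Omega'}\le G_\Omega\circ g$ on $\Omega'$, with equality in the biholomorphic case) must be invoked and combined with the local expansions defining $c_\beta$; once that bookkeeping is done, the disc computation and the squeezing-function definition close the argument.
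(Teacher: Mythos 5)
Your proposal contains a genuine error in the lower-bound argument, even though the underlying geometric idea (comparing $f(\Omega)$ to the two nested discs $\Delta(0,r)\subset f(\Omega)\subset\Delta$) is sound.

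The problem is twofold. First, the direction of monotonicity for $c_\beta$ is reversed. With the paper's convention the Green function satisfies $G_\Omega\leq 0$ and $G_\Omega(\xi,z)\sim\log|\xi-z|$ near $z$, so on a smaller domain the Green function is \emph{larger} (less negative); hence the Robin constant is larger and $c_\beta$ is larger on the smaller domain. Concretely, $c_\beta^{\Delta(0,r)}(0)=1/r$, while $c_\beta^{\Delta}(0)=1$, and since $\Delta(0,r)\subset f(\Omega)$ one has $c_\beta^{\Delta(0,r)}(0)\geq c_\beta^{f(\Omega)}(0)$, the opposite of what you wrote. Second, the conclusion $C(\Delta(0,r),0)=r^2$ contradicts the very conformal invariance you invoke in your first paragraph: $\Delta(0,r)$ is biholomorphic to $\Delta$, so $C(\Delta(0,r),0)=C(\Delta,0)=1$ for every $r$. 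Indeed $\frac{(1/r)^2}{\pi\cdot\frac{1}{\pi r^2}}=1$, not $r^2$. More fundamentally, $C(\Omega,z)$ is \emph{not} monotone under domain inclusion, because $\kappa_\Omega$ and $c_\beta^2$ both decrease as the domain grows, so a naive ``$C(f(\Omega),0)\geq C(\Delta(0,r),0)$'' argument cannot close.

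What does work is to use the two inclusions asymmetrically: from $\Delta(0,r)\subset f(\Omega)$ bound the numerator's denominator by $\kappa_{f(\Omega)}(0)\leq\kappa_{\Delta(0,r)}(0)=\frac{1}{\pi r^2}|dw|^2$, and from $f(\Omega)\subset\Delta$ bound the numerator by $c_\beta^{f(\Omega)}(0)\geq c_\beta^{\Delta}(0)=1$. These combine to give $C(f(\Omega),0)\geq r^2$, and letting $r\uparrow s_\Omega(z)$ gives the claim. Once corrected in this way, the argument is essentially the one in the paper: the paper works with an extremal Bergman form $F$ on $\Omega$, pushes it forward by $f$, applies the sub-mean-value inequality on $\Delta(0,s_\Omega)\subset f(\Omega)$ (this is the $\kappa$-comparison), and uses the Green-function inequality $2G_{f(\Omega)}\geq\log|w|^2$ on $f(\Omega)\subset\Delta$ together with Lemma~\ref{l:lem9} to compare the residue measures $dV_{f(\Omega)}[\cdot]$ (this is the $c_\beta$-comparison). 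So your plan was heading in the right direction, but as written both the monotonicity direction and the model-disc computation are wrong.
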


Moveover, when $\Omega$ has smooth boundary, by Theorem 5.2 in \cite{dgz}, we have
$\lim_{z\to\partial\Omega} s_{\Omega}=1.$
By Remark~\ref{r:suita}, we obtain $C(\Omega,z)\leq 1$. Then we have
\begin{corollary}[\!\!\cite{suita}]\label{}\quad
Let $\Omega$ be a Riemann surface which is biholomorphically
equivalent to a bounded planar domain with smooth boundary. Then we
have
$$\lim_{z\to\partial\Omega}C(\Omega,z)=1.$$
\end{corollary}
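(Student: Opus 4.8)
The plan is to obtain the statement by assembling Proposition~\ref{t:bound} with the boundary behaviour of the squeezing function proved in \cite{dgz}. The first point is that $C(\Omega,\cdot)$ and $s_\Omega$ are biholomorphic invariants: the Green function $G_\Omega$ is preserved under biholomorphisms, so $c_\beta(z)^2|dz|^2$ is a well-defined conformal metric on $\Omega$ (the subtracted $\log|\xi-z|$ exactly absorbs the coordinate change), and likewise $\kappa_\Omega(z)$ is a well-defined $(1,1)$-form on the diagonal; hence their quotient $C(\Omega,z)=\dfrac{c_\beta(z)^2|dz|^2}{\pi\kappa_\Omega(z)}$ is a genuine function on $\Omega$, and it transforms trivially under any biholomorphism, as does $s_\Omega$. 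Therefore I would first fix a biholomorphism realising $\Omega$ as a bounded planar domain $\Omega'\subset\mathbb{C}$ with smooth boundary; this is also what gives an unambiguous meaning to the symbol $\partial\Omega$ in the statement, and it reduces the problem to showing $\lim_{z\to\partial\Omega'}C(\Omega',z)=1$.

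Next I would invoke Theorem~5.2 of \cite{dgz}, which asserts that a bounded planar (indeed bounded) domain with smooth boundary satisfies $\lim_{z\to\partial\Omega'}s_{\Omega'}(z)=1$. Finally, apply Proposition~\ref{t:bound} to $\Omega'$: it supplies the two-sided bound $s_{\Omega'}(z)^2\le C(\Omega',z)\le 1$ on all of $\Omega'$, the upper bound being the (now established) Suita conjecture via Remark~\ref{r:suita} and the lower bound being the content of that proposition. Letting $z\to\partial\Omega'$ and using $s_{\Omega'}(z)^2\to 1$, a squeeze argument forces $C(\Omega',z)\to 1$, which is the desired conclusion; alternatively one may just quote the implication ``$\lim s_\Omega=1\Rightarrow\lim C(\Omega,z)=1$'' already contained in Proposition~\ref{t:bound}.

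I do not expect a genuinely hard step here: the corollary is an assembly of Proposition~\ref{t:bound} (which itself rests on Main theorem~1, i.e.\ Theorem~\ref{t:guanzhou1}, through the extended Suita conjecture) and the external input of \cite{dgz}, so all of the analytic work lies upstream. The only place requiring care — and hence the ``main obstacle'' in a formal write-up — is the bookkeeping needed to line up the hypotheses of the two cited results: verifying that the abstract Riemann surface $\Omega$ may be replaced by its smooth-boundary planar realisation without altering either $C$ or $s_\Omega$, and that the boundary limit is then interpreted in that realisation. Once this is made precise, the statement is immediate from the squeeze.
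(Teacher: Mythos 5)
Your proposal is correct and takes essentially the same route as the paper: the corollary is deduced by combining Proposition~\ref{t:bound} (the squeeze $s_\Omega^2\le C(\Omega,z)\le 1$) with the boundary behaviour $\lim_{z\to\partial\Omega}s_\Omega=1$ from Theorem~5.2 of \cite{dgz} and the upper bound $C(\Omega,z)\le 1$ from Remark~\ref{r:suita}. Your extra remarks about biholomorphic invariance of $C(\Omega,\cdot)$ and $s_\Omega$ merely make explicit the bookkeeping that the paper leaves implicit.
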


\subsubsection{\it Optimal constant in the generalized $L^2$ extension theorem on holomorphic
vector bundle and polar function which is smooth outside $S$}
In the following theorem, we give a generalization of Theorem \ref{t:ohsawa5} on
holomorphic vector bundles and polar function $\Psi$ which is smooth outside $S$, and
show that the optimal constant $C$ is equal to $1$.

\begin{theorem}[Main theorem 2]\label{t:guan-zhou-vector}\quad
Let $(M,S)$ satisfy condition $({\rm ab})$, $h$ be a smooth metric on a holomorphic vector
bundle $E$ on $M$ with rank $r$. Then, for any function
$\Psi$ on $ M$ such that $\Psi\in \Delta_{h,\delta}(S)\cap C^{\infty}(M\setminus S)$,
there exists a uniform constant
$C=1$ such that, for
any holomorphic section $f$ of $K_{M}\otimes E|_{S}$ on $S$ satisfying
\[\sum_{k=1}^{n}\frac{\pi^{k}}{k!}\int_{S_{n-k}}|f|^{2}_{h}dV_{M}[\Psi]<\infty,\] there
exists a holomorphic section $F$ of $K_{M}\otimes E$ on $M$ satisfying $F = f$ on $ S$ and
\begin{eqnarray*}
\int_{M}|F|^{2}_{h}dV_{M}
\leq C(1+\delta^{-1})\sum_{k=1}^{n}\frac{\pi^{k}}{k!}
\int_{S_{n-k}}|f|^{2}_{h}dV_{M}[\Psi].
\end{eqnarray*}
\end{theorem}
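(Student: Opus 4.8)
\textbf{Proof proposal for Main Theorem 2 (Theorem \ref{t:guan-zhou-vector}).}
The plan is to deduce the optimal constant $C=1$ in the vector bundle case from the optimal constant already available in the trivial-bundle (or line bundle) case, namely Theorem \ref{t:guan-zhou3} and Corollary \ref{coro:line1}, by a standard linear-algebra localization trick: locally trivialize $E$ and diagonalize the curvature. First I would reduce to the situation where the section $f$ of $K_M\otimes E|_S$ is supported near a fixed point and $E$ has been replaced, over a Stein coordinate neighborhood in $M\setminus X$, by a direct sum of line bundles. Since $h$ is a \emph{smooth} metric and the curvature conditions $\Theta_{h\mathrm{e}^{-\Psi}}\geq 0$ and $\Theta_{h\mathrm{e}^{-(1+\delta)\Psi}}\geq 0$ hold in the sense of Nakano on $M\setminus S$, the key point is that Nakano semipositivity passes to the ``sum of eigen-line-bundles'' decomposition: after choosing an orthonormal frame in which the appropriate Hermitian form is diagonal, each diagonal entry $h_j\mathrm{e}^{-\Psi}$ (resp.\ $h_j\mathrm{e}^{-(1+\delta)\Psi}$) is a metric on a line bundle with $\Theta\geq 0$ in the sense of current. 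Then Corollary \ref{coro:line1} applies component-by-component with constant $C=1$, and summing over $j=1,\dots,r$ the $L^2$ norms $|F|^2_h=\sum_j|F_j|^2_{h_j}$ and $|f|^2_h=\sum_j|f_j|^2_{h_j}$ reproduces exactly the desired estimate $\int_M|F|^2_h\,dV_M\leq C(1+\delta^{-1})\sum_k\frac{\pi^k}{k!}\int_{S_{n-k}}|f|^2_h\,dV_M[\Psi]$ with $C=1$. Finally, the extension over $X$ is handled by Remark \ref{r:extend} (here $h$ being smooth certainly has locally positive lower bound), and $M\setminus X$ being Stein lets us run the whole argument there.

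The technical core of the argument is the $\bar\partial$-estimate producing the component extensions with the sharp constant. Rather than invoking Corollary \ref{coro:line1} as a black box, I expect the honest route is to rerun the proof of Theorem \ref{t:guan-zhou3} in the vector bundle setting directly: set up the usual Ohsawa--Takegoshi machinery with the twisting functions depending on $\Psi$ and the parameter $\delta$ (the same undetermined-function / $b_\varepsilon, c_\varepsilon$ pair used to get $1+\delta^{-1}$ rather than a worse constant), and solve $\bar\partial v = $ (a smooth approximation of the obstruction to extending $f$) with the weight $h\mathrm{e}^{-\Psi-\varphi_0}$, where $\varphi_0$ is an auxiliary plurisubharmonic term controlling the singularity of $\Psi$ along $S$. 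Here is exactly where Nakano positivity (as opposed to Griffiths positivity) of $\Theta_{h\mathrm{e}^{-\Psi}}$ is indispensable: the Bochner--Kodaira--Nakano inequality for $(n,1)$-forms with values in $E$ needs the curvature operator acting on $(n,1)$-forms to be positive, which is precisely the Nakano condition. Then one lets the approximation parameter and the cutoff go to their limits and reads off the estimate; the appearance of $\sum_{k=1}^n\frac{\pi^k}{k!}\int_{S_{n-k}}$ on the right comes, as in \cite{ohsawa5}, from the normalization of $dV_M[\Psi]$ on the strata $S_{n-k}$ of different dimension, via the coarea/Fubini computation on the tubular model $\{(n-l)\log|z'|^2\}$.

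The main obstacle, I expect, is \emph{not} the curvature bookkeeping but getting the constant to be \emph{exactly} $C=1$ uniformly — i.e., showing no loss occurs either (i) in the localization/diagonalization step (one must check that passing to eigen-line-bundles and summing does not degrade the constant, which is clear since $L^2$ norms add and the inequality is applied with the same $C$ in each summand), or (ii) in the limiting process where one removes the regularizations of $\Psi$ and of $h$ and lets the exhaustion of the Stein manifold $M\setminus X$ expand. The standard device is a weak-compactness / normal-families argument: extract a weakly convergent subsequence of the approximate extensions $F_\varepsilon$, whose weak limit $F$ is holomorphic, satisfies $F=f$ on $S$ (using that the defect of $f$ being extendible is killed by the sharp weight), and has $L^2$ norm bounded by the $\liminf$ of the approximate bounds; the sharpness then follows because each approximate bound carries the constant $1+\delta^{-1}$ with no $\varepsilon$-dependent factor. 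One should also remark (as the authors do for Theorem \ref{t:guan-zhou3}, cf.\ Remark \ref{r:guan-zhou3}) that $C=1$ is optimal, e.g.\ by the planar-domain example with $S=\{z_0\}$, so the stated constant cannot be improved.
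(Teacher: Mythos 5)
Your ``honest route'' is indeed the paper's proof: one reruns the argument for Theorem \ref{t:guan-zhou3} directly in the vector bundle setting, using Demailly's Bochner--Kodaira--Nakano inequality with the auxiliary functions $\eta=s(-v_{t_0}\circ\Psi)$, $g$, and the twisted metric $h'=h\mathrm{e}^{-\Psi-\phi}$ (Lemma \ref{l:vector}), solving $\bar\partial u=\lambda$ via Lemma \ref{l:vector7} on $D_v\setminus S$, and then passing through the same ODE system $(s+\tfrac{s'^2}{u''s-s''})\mathrm{e}^{u-t}=1$, $s'-su'=1$, and the same cascade of weak limits in $\varepsilon$, $t_0$, $v$. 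You also correctly identify why Nakano (not Griffiths) semipositivity is the right hypothesis and why the strata $S_{n-k}$ appear. Two small differences from what you sketched: the paper does not mollify $\Psi$ here at all (this is why $\Psi\in C^\infty(M\setminus S)$ is assumed, in contrast to Theorem \ref{t:guan-zhou3}), and it works on the complete K\"ahler manifold $D_v\setminus S$ via Lemma \ref{l:vector7} rather than a bounded domain with $C^\infty$ boundary via Lemma \ref{l:lem7} --- the latter would require a smooth approximation of $\Psi$ up to $\overline{D_v}$ that is not available for a bundle metric.

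The first route you floated --- reduce to Corollary \ref{coro:line1} by ``locally trivializing $E$ and diagonalizing the curvature'' so that each diagonal entry becomes a line bundle with $\Theta\geq0$, then sum the $L^2$ norms --- does not work, and I want to flag this clearly because you lean on it in the first paragraph. Nakano semipositivity of $\Theta_{h\mathrm{e}^{-\Psi}}$ is a condition on the Hermitian form on $T_pM\otimes E_p$, and it cannot be decomposed, even pointwise, into $r$ line-bundle curvature conditions: there is no holomorphic sub-line-bundle splitting of $E$ in general, the unitary frame that diagonalizes the $\operatorname{End}(E)$-valued curvature at a point is not holomorphic, and even along a local holomorphic splitting the curvature of a sub-line-bundle differs from the corresponding diagonal block of $\Theta_E$ by second-fundamental-form terms of indefinite sign. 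Your remark that ``passing to eigen-line-bundles and summing does not degrade the constant, which is clear since $L^2$ norms add'' presupposes a decomposition that does not exist. This is precisely why the vector bundle case cannot be scalarized and must go through the Nakano-type $\bar\partial$-estimate directly, which is what the paper (and your second paragraph) does.
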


When $\delta$ approaches to infinity, we have a generalization of
Theorem 4 in \cite{ohsawa5} (the main theorem) of holomorphic vector
bundles in plurisubharmonic case ($\Psi$ is plurisubharmonic on $M$)
and show that the $C$ is equal to $1$, which is optimal.

\begin{corollary}\quad
Let $(M,S)$ satisfy condition $({\rm ab})$,
$h$ be a smooth metric on holomorphic vector bundle $E$ on $M$ with rank $r$.
Then, for any function
$\Psi$ on $ M$ such that $\Psi\in \Delta(S)\cap\Delta_{h,\delta}(S)\cap C^{\infty}
(M\setminus S)$, there exists a uniform constant
$C=1$ such that, for
any holomorphic section $f$ of $K_{M}\otimes E|_{S}$ on $S$ satisfying\[\sum_{k=1}^{n}
\frac{\pi^{k}}{k!}\int_{S_{n-k}}|f|^{2}_{h}dV_{M}[\Psi]<\infty,\] there
exists a holomorphic section $F$ of $K_{M}\otimes E$ on $M$ satisfying $F = f$ on $ S$ and
\begin{eqnarray*}
\int_{M}|F|^{2}_{h}dV_{M}
\leq C\sum_{k=1}^{n}\frac{\pi^{k}}{k!}\int_{S_{n-k}}|f|^{2}_{h}dV_{M}[\Psi].
\end{eqnarray*}
\end{corollary}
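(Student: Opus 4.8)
The plan is to derive this corollary directly from Main Theorem 2 (Theorem~\ref{t:guan-zhou-vector}) by letting $\delta$ tend to infinity, following the same mechanism by which Theorem~\ref{t:guanzhou1} and Corollary~\ref{coro:line3} are obtained from the case with finite $\delta$. The point is that the hypothesis $\Psi\in\Delta(S)\cap\Delta_{h,\delta}(S)\cap C^{\infty}(M\setminus S)$ is stronger than the hypothesis needed for Theorem~\ref{t:guan-zhou-vector}: $\Psi$ is now plurisubharmonic on all of $M$ (this is the content of $\Psi\in\Delta(S)$), so in particular $\Psi\in\Delta_{h,\delta'}(S)\cap C^{\infty}(M\setminus S)$ for \emph{every} $\delta'>0$. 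Indeed, $\Theta_{h{\rm e}^{-(1+\delta')\Psi}}=\Theta_{h}+(1+\delta')\sqrt{-1}\partial\bar\partial\Psi\geq 0$ in the sense of Nakano whenever $\Theta_{h{\rm e}^{-\Psi}}=\Theta_h+\sqrt{-1}\partial\bar\partial\Psi\geq 0$ and $\sqrt{-1}\partial\bar\partial\Psi\geq 0$, since we are adding a nonnegative (Nakano semipositive) term. Hence Theorem~\ref{t:guan-zhou-vector} applies with $\delta$ replaced by any $\delta'>0$.

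First I would fix the holomorphic section $f$ of $K_M\otimes E|_S$ with $\sum_{k=1}^{n}\frac{\pi^k}{k!}\int_{S_{n-k}}|f|^2_h\,dV_M[\Psi]=:A<\infty$, and note that $A$ does not depend on $\delta'$ because the measure $dV_M[\Psi]$ and the weight $|f|^2_h$ are fixed. For each $\delta'>0$, Theorem~\ref{t:guan-zhou-vector} produces a holomorphic section $F_{\delta'}$ of $K_M\otimes E$ on $M$ with $F_{\delta'}=f$ on $S$ and $\int_M|F_{\delta'}|^2_h\,dV_M\leq (1+(\delta')^{-1})A$. Letting $\delta'\to\infty$ gives a family of extensions whose $L^2$ norms are uniformly bounded by $A+\varepsilon$ for any prescribed $\varepsilon>0$; more precisely, taking $\delta'$ large enough we get $\int_M|F_{\delta'}|^2_h\,dV_M\leq (1+\varepsilon)A$.

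Next I would extract a weak limit. The family $\{F_{\delta'}\}$ (say along a sequence $\delta'_j\to\infty$) is bounded in the Hilbert space of $L^2$ holomorphic sections of $K_M\otimes E$ with respect to the fixed norm $\|\cdot\|^2=\int_M|\cdot|^2_h\,dV_M$ --- this space is complete because $h$ is smooth (hence locally bounded below) and $M\setminus X$ is Stein, using Remark~\ref{r:extend} to pass between $M$ and $M\setminus X$. By weak compactness of bounded sets in Hilbert space, a subsequence converges weakly to some $F$; weak $L^2$ convergence of holomorphic sections forces locally uniform convergence (by the mean value / sub-mean-value property on small coordinate balls), so $F$ is holomorphic and $F=f$ on $S$. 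Lower semicontinuity of the norm under weak convergence yields $\|F\|^2\leq\liminf_j\|F_{\delta'_j}\|^2\leq(1+\varepsilon)A$. Since $\varepsilon>0$ was arbitrary, I would conclude $\int_M|F|^2_h\,dV_M\leq A=\sum_{k=1}^{n}\frac{\pi^k}{k!}\int_{S_{n-k}}|f|^2_h\,dV_M[\Psi]$, which is the desired estimate with $C=1$.

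The only genuinely delicate point is the verification that $F=f$ holds on $S$ after passing to the limit, together with making sure the restriction map $F\mapsto F|_S$ is continuous for the relevant topology; this is why locally uniform convergence (rather than mere weak $L^2$ convergence) is needed, and it is standard for holomorphic sections but should be stated carefully. Everything else is soft functional analysis plus the observation about Nakano positivity being preserved under adding $\delta'\sqrt{-1}\partial\bar\partial\Psi\geq 0$. Alternatively, and perhaps more cleanly, one can avoid the limiting argument entirely by noting that for a fixed plurisubharmonic $\Psi\in\Delta(S)$ one may apply Theorem~\ref{t:guan-zhou-vector} with \emph{any} single large $\delta'$ and then simply remark that $(1+(\delta')^{-1})$ can be made arbitrarily close to $1$; but to get the clean statement with $C=1$ (no $\varepsilon$), the weak-limit extraction is the natural device, exactly as in the passage from the $\delta$-dependent results to Theorem~\ref{t:guanzhou1}.
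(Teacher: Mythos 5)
Your proof is correct and matches the paper's intended argument: since $\Psi\in\Delta(S)$ gives $\sqrt{-1}\partial\bar\partial\Psi\geq0$, membership in $\Delta_{h,\delta'}(S)$ holds for every $\delta'>0$, and the paper indeed obtains the corollary by letting $\delta\to\infty$ in Theorem~\ref{t:guan-zhou-vector} (exactly as Theorem~\ref{t:guanzhou1} is deduced from Theorem~\ref{t:guan-zhou3}), with the weak-limit extraction you describe supplying the standard details the paper leaves implicit.
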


\section{Some lemmas used in the proof of main theorems}

In this section, we give some lemmas which will be used in the
proofs of main theorems of the present paper.

\subsection{${\bm L}^{\bm2}$ estimates for $\bar{\bm\partial}$ equations}
In this subsection, we give some lemmas on $L^{2}$ estimates for some
$\bar\partial$ equations, and $\bar\partial^*$ means the Hilbert adjoint operator of
$\bar\partial$.

\begin{lemma}\label{l:lem3}\quad
Let $(X,ds_{X}^{2})$ be a K\"{a}hler manifold of dimension n with a K\"{a}hler
metric $ds_{X}^{2}$,
$\Omega\subset\subset X$ be a domain with $C^\infty$ boundary $b\Omega$,
$\Phi\in C^{\infty}(\overline \Omega)$.
 Let $\rho$ be a $C^{\infty}$ defining function for $\Omega$
such that $|d\rho|=1$ on $b\Omega$.
Let $\eta$ be a smooth function on $\overline{\Omega}$. Then for any $(n,1)$-form
$\alpha={\sum}'_{|I|=n}\alpha_{I\bar j}dz^I\wedge d\bar z^{j}
\in {\rm Dom}_\Omega(\bar{\partial}^*)\cap C^\infty_{(n,1)}(\overline\Omega)$,
\begin{align}
\label{guan1}
&\int_{\Omega}\eta|\bar{\partial}^{*}_{\Phi}\alpha|^{2}{\rm e}^{-\Phi}dV_{X}
+\int_{\Omega}\eta|\bar{\partial}\alpha|^{2}{\rm e}^{-\Phi}dV_{X}\nonumber\\
&\quad=\underset{|J|=1}{{\sum}'}\sum_{i,j=1}^{n}\int_{\Omega}
\eta g^{i\bar j}\overline\nabla_{j}\alpha_{I\bar J}\overline{\overline\nabla_{i}
\alpha^{\bar IJ}}{\rm e}^{-\Phi}dV_{X}
+\sum_{i,j=1}^{n}
\int_{b\Omega}\eta(\partial_i\bar\partial_{j}\rho)\alpha_{I\text{ }}^
{\text{ }i\text{ }}\overline{{\alpha}^{\bar Ij}}{\rm e}^{-\Phi}dS\nonumber
\\&\qquad+\sum_{i,j=1}^{n}\int_{\Omega}
\eta(\partial_i\bar \partial_{j}\Phi)\alpha_{I\text{ }}^{\text{ }i\text{ }}
\overline{{\alpha}^{\bar Ij}}{\rm e}^{-\Phi}dV_{X}
+\sum_{i,j=1}^{n}\int_{\Omega}
-(\partial_i\bar \partial_{j}\eta)\alpha_{I\text{ }}^{\text{ }i\text{ }}
\overline{{\alpha}^{\bar Ij}}{\rm e}^{-\Phi}dV_{X}\nonumber
\\&\qquad+2\mathrm{Re}(\bar\partial^*_\Phi\alpha,\alpha\llcorner(\bar\partial\eta)^\sharp )_
{\Omega,\Phi},
\end{align}
where $(g^{i\bar j})_{n\times n}=\overline{(g_{i\bar j})}_{n\times n}^{-1}$, and
$dV_{X}$ is the volume form with $ds_{X}^{2}$.
\end{lemma}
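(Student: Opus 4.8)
The plan is to recognise (\ref{guan1}) as the twisted Bochner--Kodaira--Nakano--H\"{o}rmander identity with boundary term and to deduce it from the classical (untwisted) Morrey--Kohn--H\"{o}rmander identity by the substitution of weights $\Phi\mapsto\Phi-\log\eta$. First I would observe that it suffices to prove (\ref{guan1}) when $\eta>0$ on $\overline\Omega$: both sides are affine functionals of $\eta$ --- the left-hand side is $\int_\Omega\eta(|\bar\partial^*_\Phi\alpha|^2+|\bar\partial\alpha|^2){\rm e}^{-\Phi}dV_X$, on the right the good term, the boundary (Levi) term and the $\partial_i\bar\partial_j\Phi$ term are linear in $\eta$, while the $\partial_i\bar\partial_j\eta$ term and the cross term $2\,\mathrm{Re}(\bar\partial^*_\Phi\alpha,\alpha\llcorner(\bar\partial\eta)^\sharp)_{\Omega,\Phi}$ involve only the derivatives of $\eta$ --- and (\ref{guan1}) holds trivially for $\eta\equiv0$ (both sides vanish) and, for a constant $\eta\equiv c$, is just $c$ times the untwisted identity (the $\eta\equiv1$ case). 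Since $\overline\Omega$ is compact, any smooth $\eta$ can be written $(\eta+c)-c$ with $\eta+c>0$, so by affinity the general case follows from the positive one.

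Assume then $\eta>0$ and set $\Phi':=\Phi-\log\eta\in C^{\infty}(\overline\Omega)$, so that ${\rm e}^{-\Phi'}dV_X=\eta\,{\rm e}^{-\Phi}dV_X$. Since the boundary condition defining ${\rm Dom}_\Omega(\bar\partial^*)$ among smooth forms is governed by the symbol of $\bar\partial^*$ and is therefore weight-independent, $\alpha\in{\rm Dom}_\Omega(\bar\partial^*)\cap C^\infty_{(n,1)}(\overline\Omega)$ also lies in the domain of $\bar\partial^*_{\Phi'}$, and I would apply the classical Bochner--Kodaira--Nakano--H\"{o}rmander identity for $(n,1)$-forms on the K\"{a}hler manifold $(X,ds_X^2)$ with weight $\Phi'$ (see, e.g., \cite{demailly-book}). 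In that identity the covariant derivative $\overline\nabla$ in the good term is the one induced by $ds_X^2$ and carries no weight, so multiplying through by ${\rm e}^{-\Phi'}=\eta\,{\rm e}^{-\Phi}$ turns the good term into $\underset{|J|=1}{{\sum}'}\sum_{i,j}\int_\Omega\eta\,g^{i\bar j}\overline\nabla_j\alpha_{I\bar J}\overline{\overline\nabla_i\alpha^{\bar IJ}}{\rm e}^{-\Phi}dV_X$, the boundary term into $\sum_{i,j}\int_{b\Omega}\eta(\partial_i\bar\partial_j\rho)\alpha_I^{\,i}\overline{\alpha^{\bar Ij}}{\rm e}^{-\Phi}dS$, and the curvature term into $\sum_{i,j}\int_\Omega\eta(\partial_i\bar\partial_j\Phi')\alpha_I^{\,i}\overline{\alpha^{\bar Ij}}{\rm e}^{-\Phi}dV_X$, while the left-hand side becomes $\int_\Omega\eta|\bar\partial^*_{\Phi'}\alpha|^2{\rm e}^{-\Phi}dV_X+\int_\Omega\eta|\bar\partial\alpha|^2{\rm e}^{-\Phi}dV_X$.

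It then remains to re-express the curvature term and the adjoint term through $\Phi$ and $\eta$. From $\partial_i\bar\partial_j\log\eta=\eta^{-1}\partial_i\bar\partial_j\eta-\eta^{-2}(\partial_i\eta)(\partial_{\bar j}\eta)$ one gets $\eta\,\partial_i\bar\partial_j\Phi'=\eta\,\partial_i\bar\partial_j\Phi-\partial_i\bar\partial_j\eta+\eta^{-1}(\partial_i\eta)(\partial_{\bar j}\eta)$, producing the $-(\partial_i\bar\partial_j\eta)$ term of (\ref{guan1}) together with an extra integral $\sum_{i,j}\int_\Omega\eta^{-1}(\partial_i\eta)(\partial_{\bar j}\eta)\alpha_I^{\,i}\overline{\alpha^{\bar Ij}}{\rm e}^{-\Phi}dV_X$. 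On the other side $\bar\partial^*_{\Phi'}\alpha=\bar\partial^*_\Phi\alpha-\alpha\llcorner(\bar\partial\log\eta)^\sharp$ and $\eta(\bar\partial\log\eta)^\sharp=(\bar\partial\eta)^\sharp$, so expanding $\eta|\bar\partial^*_{\Phi'}\alpha|^2$ gives $\eta|\bar\partial^*_\Phi\alpha|^2-2\,\mathrm{Re}\langle\bar\partial^*_\Phi\alpha,\alpha\llcorner(\bar\partial\eta)^\sharp\rangle+\eta^{-1}|\alpha\llcorner(\bar\partial\eta)^\sharp|^2$. The integral of $\eta^{-1}|\alpha\llcorner(\bar\partial\eta)^\sharp|^2$ is exactly the extra integral produced by the curvature computation --- both equal $\eta^{-1}$ times the squared length of the contraction of $\alpha$ with the gradient of $\eta$ --- so these cancel upon rearrangement, leaving the cross term $+2\,\mathrm{Re}(\bar\partial^*_\Phi\alpha,\alpha\llcorner(\bar\partial\eta)^\sharp)_{\Omega,\Phi}$; collecting all terms yields (\ref{guan1}).

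Given the untwisted Bochner--Kodaira--Nakano--H\"{o}rmander identity, the steps above are essentially bookkeeping; the two points demanding care are: (i) invoking the untwisted identity in exactly the normalisation in which the good term's covariant derivative $\overline\nabla$ is weight-free and the only surviving boundary contribution is the Levi form $\sum_{i,j}(\partial_i\bar\partial_j\rho)\alpha_I^{\,i}\overline{\alpha^{\bar Ij}}$ --- which uses $|d\rho|=1$ on $b\Omega$ and the vanishing along $b\Omega$ of the complex-normal part of $\alpha$ forced by $\alpha\in{\rm Dom}_\Omega(\bar\partial^*)$; and (ii) fixing the sign in $\bar\partial^*_{\Phi'}\alpha=\bar\partial^*_\Phi\alpha-\alpha\llcorner(\bar\partial\log\eta)^\sharp$, equivalently in the transformation rule $\bar\partial^*_{\Phi'}={\rm e}^{\Phi'}\bar\partial^*_0({\rm e}^{-\Phi'}\,\cdot)$ and the identification of the resulting interior-product term. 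These, rather than the computation, are where I would expect to spend the effort.
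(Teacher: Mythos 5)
The paper does not actually prove Lemma~\ref{l:lem3}: it states the twisted Bochner--Kodaira--Morrey--Kohn--H\"ormander identity and defers to the references \cite{guan-zhou-zhu10,cao-shaw-wang,siu96,siu00,Straube}, which establish it by direct integration by parts with the twist factor $\eta$ carried through the computation. Your route is a genuinely different one: you deduce the twisted identity from the \emph{untwisted} one by the weight shift $\Phi\mapsto\Phi'=\Phi-\log\eta$, and remove the positivity restriction on $\eta$ at the end by linearity in $\eta$. Both are valid. Your reduction is shorter and more conceptual; what it costs is precisely the two bookkeeping points you flag at the end, namely the sign in $\bar\partial^*_{\Phi'}\alpha=\bar\partial^*_\Phi\alpha-\alpha\llcorner(\bar\partial\log\eta)^\sharp$ and the identification
\[
\eta^{-1}\bigl|\alpha\llcorner(\bar\partial\eta)^\sharp\bigr|^2
=\eta^{-1}\sum_{i,j}(\partial_i\eta)(\bar\partial_j\eta)\,\alpha_I^{\ i}\overline{\alpha^{\bar Ij}},
\]
which is what makes the residual $\eta^{-1}\partial\eta\wedge\bar\partial\eta$ term from $\partial\bar\partial\log\eta$ cancel the square of the interior-product term coming from expanding $\eta|\bar\partial^*_{\Phi'}\alpha|^2$. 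The direct approach avoids having to track these cancellations but is longer. Two small remarks on your write-up: (i) both sides of \eqref{guan1} are in fact \emph{linear} (not merely affine) in $\eta$, since they vanish at $\eta\equiv0$, which is enough for the reduction $\eta=(\eta+c)-c$ once the constant-$\eta$ case is checked; (ii) it is worth saying explicitly that $\Phi'\in C^\infty(\overline\Omega)$ only because $\eta>0$ on the compact set $\overline\Omega$, which is exactly why the positivity reduction is needed before the substitution can be invoked. With those noted, I see no gap in the argument.
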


The symbols and notation are referred to \cite{guan-zhou-zhu10}.
See also \cite{cao-shaw-wang,siu96,siu00} and
\cite{Straube}.

\begin{lemma}\label{l:lem7}\quad
Let $(X,ds_{X}^{2})$ be a Hermitian manifold of dimension n with a
Hermitian metric $ds_{X}^{2}$, $\Omega\subset\subset X$ be a
strictly pseudoconvex domain in $X$ with $C^\infty$ boundary
$b\Omega$ and $\Phi\in C^\infty(\overline{\Omega})$. Let $\lambda$
be a $\bar\partial$ closed smooth form of bi-degree $(n,1)$ on
$\overline{\Omega}$. Assume the inequality
$$|(\lambda,\alpha)_{\Omega,\Phi}|^{2}\leq C\int_{ \Omega}|\bar{\partial}^{*}_
{\Phi}\alpha|^{2}\frac{{\rm e}^{-\Phi}}{\mu}dV_{X}<\infty,$$
holds for all $(n,1)$-form $\alpha\in {\rm Dom}_{\Omega}
(\bar\partial^*)\cap {\rm Ker}(\bar\partial)\cap C^\infty_{(n,1)}(\overline \Omega)$,
where $\frac{1}{\mu}$ is an integrable positive function on $\Omega$ and
$C$ is a constant. Then there is a solution $u$ to the
equation $\bar{\partial}u=\lambda$ such that
$$\int_{\Omega}|u|^{2}\mu {\rm e}^{-\Phi}dV_{X}\leq C.$$
\end{lemma}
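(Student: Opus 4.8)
The plan is to run the classical Hörmander $\bar\partial$-duality argument (Hahn--Banach plus Riesz representation), carried out with the two weights $e^{-\Phi}$ on the ``equation side'' and $\mu e^{-\Phi}$ on the ``solution side''. First I would introduce the Hilbert space $\mathcal H:=L^2_{(n,0)}(\Omega,\tfrac{e^{-\Phi}}{\mu}dV_X)$ of $(n,0)$-forms square integrable against $\tfrac{e^{-\Phi}}{\mu}dV_X$; since $\mu>0$ and $\tfrac1\mu$ is integrable this is a genuine Hilbert space, with inner product $\langle\xi,\zeta\rangle_{\mathcal H}=\int_\Omega\xi\cdot\bar\zeta\,\tfrac{e^{-\Phi}}{\mu}dV_X$. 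On the linear subspace $V:=\{\bar\partial^*_\Phi\alpha:\alpha\in\mathrm{Dom}_\Omega(\bar\partial^*)\cap\mathrm{Ker}(\bar\partial)\cap C^\infty_{(n,1)}(\overline\Omega)\}\subset\mathcal H$ I define the linear functional $L(\bar\partial^*_\Phi\alpha):=\overline{(\lambda,\alpha)_{\Omega,\Phi}}$. The hypothesis $|(\lambda,\alpha)_{\Omega,\Phi}|^2\leq C\int_\Omega|\bar\partial^*_\Phi\alpha|^2\tfrac{e^{-\Phi}}{\mu}dV_X$ does two things at once: it shows $L$ is well defined (if $\bar\partial^*_\Phi\alpha=0$ the right-hand side vanishes, hence so does $(\lambda,\alpha)_{\Omega,\Phi}$, so $L$ does not depend on the representative $\alpha$), and it shows $L$ is bounded on $V$ with $\|L\|\leq\sqrt C$.

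Next I would extend $L$ to a bounded functional on all of $\mathcal H$ by Hahn--Banach with the same norm, and apply the Riesz representation theorem to produce $g\in\mathcal H$ with $L(\beta)=\langle\beta,g\rangle_{\mathcal H}$ for all $\beta\in\mathcal H$ and $\int_\Omega|g|^2\tfrac{e^{-\Phi}}{\mu}dV_X=\|g\|_{\mathcal H}^2\leq C$. Then I set $u:=g/\mu$, an $(n,0)$-form, so that immediately $\int_\Omega|u|^2\mu\, e^{-\Phi}dV_X=\int_\Omega|g|^2\tfrac{e^{-\Phi}}{\mu}dV_X\leq C$, the asserted bound. Unwinding the representation, for every admissible test form $\alpha$ one gets $\int_\Omega u\cdot\overline{\bar\partial^*_\Phi\alpha}\,e^{-\Phi}dV_X=\int_\Omega g\cdot\overline{\bar\partial^*_\Phi\alpha}\,\tfrac{e^{-\Phi}}{\mu}dV_X=\overline{L(\bar\partial^*_\Phi\alpha)}=(\lambda,\alpha)_{\Omega,\Phi}$, where the first integral converges by Cauchy--Schwarz in $\mathcal H$.

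It remains to deduce $\bar\partial u=\lambda$. I would first propagate the identity $\int_\Omega u\cdot\overline{\bar\partial^*_\Phi\alpha}\,e^{-\Phi}dV_X=(\lambda,\alpha)_{\Omega,\Phi}$ from smooth $\bar\partial$-closed $\alpha$ to all $\alpha\in\mathrm{Dom}_\Omega(\bar\partial^*)\cap\mathrm{Ker}(\bar\partial)$ using the Friedrichs--Hörmander density lemma (this is the one place the hypotheses ``$\Omega\subset\subset X$ with $C^\infty$ boundary'' are genuinely used). Observe also that applying the a priori inequality to forms with $\bar\partial^*_\Phi\alpha=0$ forces $(\lambda,\alpha)_{\Omega,\Phi}=0$, i.e. $\lambda$ is orthogonal to the harmonic space $\mathrm{Ker}(\bar\partial)\cap\mathrm{Ker}(\bar\partial^*_\Phi)$, hence $\lambda\in\overline{\mathrm{Im}\,\bar\partial}$. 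Now for an arbitrary $\alpha\in C^\infty_{(n,1),c}(\Omega)\subset\mathrm{Dom}_\Omega(\bar\partial^*)$, decompose orthogonally $\alpha=\alpha'+\alpha''$ with $\alpha'\in\overline{\mathrm{Im}\,\bar\partial}\subset\mathrm{Ker}(\bar\partial)$ and $\alpha''\in\mathrm{Ker}(\bar\partial^*_\Phi)$; then $\alpha'\in\mathrm{Dom}(\bar\partial^*_\Phi)$, $\bar\partial^*_\Phi\alpha=\bar\partial^*_\Phi\alpha'$, and $(\lambda,\alpha'')_{\Omega,\Phi}=0$, so $\int_\Omega u\cdot\overline{\bar\partial^*_\Phi\alpha}\,e^{-\Phi}dV_X=(\lambda,\alpha')_{\Omega,\Phi}=(\lambda,\alpha)_{\Omega,\Phi}$, which for compactly supported smooth $\alpha$ says precisely $\bar\partial u=\lambda$ in the sense of currents. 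I expect this third paragraph to be the only real obstacle: one has to combine the density lemma with the orthogonal decompositions while being careful that the hypothesis, applied to harmonic forms, exactly kills the component of $\lambda$ that would obstruct solvability; the bookkeeping with the two weights and the conjugations in the first two paragraphs is routine.
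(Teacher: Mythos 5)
The paper gives no proof of this lemma; it simply points to Lemma~2.4 of Berndtsson (Ann.\ Inst.\ Fourier 1996), whose proof is precisely the Hahn--Banach/Riesz duality argument you run: a bounded functional on $\{\bar\partial^*_\Phi\alpha\}\subset L^2(e^{-\Phi}/\mu)$, Riesz representation by some $g$, and then $u=g/\mu$. So your first two paragraphs reproduce the intended argument, with the right weights, the right norm bound $\|L\|\leq\sqrt C$, and the right conjugations. This is the same route as the paper.

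The one place you flag as "the only real obstacle" is indeed where a small inaccuracy sits. You propose to push the identity $\int_\Omega u\cdot\overline{\bar\partial^*_\Phi\alpha}\,e^{-\Phi}=(\lambda,\alpha)_\Phi$ from smooth $\bar\partial$-closed $\alpha$ to all $\alpha\in\operatorname{Dom}(\bar\partial^*)\cap\operatorname{Ker}(\bar\partial)$ "using the Friedrichs--H\"ormander density lemma." But that lemma produces smooth approximants in the graph norm of $\operatorname{Dom}(\bar\partial)\cap\operatorname{Dom}(\bar\partial^*)$; it does not produce approximants that stay inside $\operatorname{Ker}(\bar\partial)$, so the limit passage as written does not go through. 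The fix, which uses strict pseudoconvexity and the smooth boundary of $\Omega$ (the hypotheses you singled out), is either (i) invoke Kohn's $\bar\partial$-Neumann regularity on strictly pseudoconvex domains, which shows the $L^2_\Phi$-orthogonal projection onto $\operatorname{Ker}(\bar\partial)$ preserves $C^\infty(\overline\Omega)$ -- then the component $\alpha'$ of a compactly supported smooth $\alpha$ is itself a smooth $\bar\partial$-closed form in $\operatorname{Dom}(\bar\partial^*)$ and the identity applies to it directly, with no limit needed; or (ii) replace the density argument by the simpler orthogonal decomposition $\alpha=\alpha'+\alpha''$ with $\alpha'\in\operatorname{Ker}(\bar\partial)$ and $\alpha''\in(\operatorname{Ker}\bar\partial)^{\perp}\subset\operatorname{Ker}(\bar\partial^*_\Phi)$, so that $(\lambda,\alpha'')_\Phi=0$ follows merely from $\lambda\in\operatorname{Ker}(\bar\partial)$ without having to first show $\lambda\in\overline{\operatorname{Im}\,\bar\partial}$. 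Your use of the finer decomposition $\overline{\operatorname{Im}\,\bar\partial}\oplus\operatorname{Ker}(\bar\partial^*)$ together with the ``$\lambda$ is orthogonal to harmonics'' step is correct but slightly more roundabout than necessary. With either repair, your third paragraph closes and the proof matches Berndtsson's.
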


The proof is similar to the proof of Lemma 2.4 in \cite{berndtsson}.

\begin{lemma}[See \cite{ohsawa3,demailly2010}]\label{l:vector}\quad
Let $(X,\omega)$ be a K\"{a}hler manifold of dimension $n$ with a
K\"{a}hler metric $\omega$, $E$ be a holomorphic hermitian vector
bundle over $X$, and $\Omega\subset\subset X$ be a domain with
$C^\infty$ boundary $b\Omega$. Let $\eta,g>0$ be smooth functions on
$X$. Then for every form $\alpha\in \mathcal
{D}(X,\Lambda^{n,q}T_{X}^{*}\otimes E)$, which is the space of
smooth differential $(n,q)$-forms with values in $E$ with compact
support, we have
\begin{equation}
\label{}
\|(\eta+g^{-1})^{\frac{1}{2}}D''^{*}\alpha\|^{2}
+\|\eta^{\frac{1}{2}}D''\alpha\|^{2}
\geq\langle\langle[\eta\sqrt{-1}\Theta_{E}-\sqrt{-1}\partial\bar\partial\eta-
\sqrt{-1}g\partial\eta\wedge\bar\partial\eta,\Lambda_{\omega}]\alpha,\alpha\rangle\rangle.
\end{equation}
\end{lemma}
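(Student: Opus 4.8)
\section{Proof proposal for Lemma \ref{l:vector}}

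\textbf{Overview.} The plan is to prove this as a standard consequence of the Bochner--Kodaira--Nakano identity with boundary term on the domain $\Omega$, but with the twist coming from the auxiliary functions $\eta$ and $g$. Since $\alpha$ has compact support in $X$ (hence no boundary contribution is actually forced, but we keep $\Omega$ so that integration by parts is legitimate), the identity we start from is the twisted Bochner--Kodaira formula: for the weighted operators one has, schematically,
\[
\|\eta^{1/2}D''^{*}\alpha\|^{2}+\|\eta^{1/2}D''\alpha\|^{2}
=\|\eta^{1/2}D'^{*}\alpha\|^{2}+\|\eta^{1/2}D'\alpha\|^{2}
+\langle\langle[\eta\sqrt{-1}\Theta_{E},\Lambda_{\omega}]\alpha,\alpha\rangle\rangle
+(\text{commutator terms involving }d\eta).
\]
The commutator $[D'',\eta]$ and $[D''^{*},\eta]$ produce zeroth-order terms $(\partial\eta)\wedge\cdot$ and its adjoint, and collecting them yields a term $-\langle\langle[\sqrt{-1}\partial\bar\partial\eta,\Lambda_{\omega}]\alpha,\alpha\rangle\rangle$ plus a cross term that is linear in $D''^{*}\alpha$ and in $(\bar\partial\eta)^{\sharp}\llcorner\alpha$ (exactly the structure visible in the boundary-term identity of Lemma \ref{l:lem3}, minus the boundary integral).

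\textbf{Key steps.} First I would record the twisted identity above on $\Omega$, discarding the manifestly nonnegative terms $\|\eta^{1/2}D'^{*}\alpha\|^{2}$ and $\|\eta^{1/2}D'\alpha\|^{2}$ (Nakano's trick), to obtain
\[
\|\eta^{1/2}D''^{*}\alpha\|^{2}+\|\eta^{1/2}D''\alpha\|^{2}
\geq\langle\langle[\eta\sqrt{-1}\Theta_{E}-\sqrt{-1}\partial\bar\partial\eta,\Lambda_{\omega}]\alpha,\alpha\rangle\rangle
-2\,\mathrm{Re}\,\langle\langle D''^{*}\alpha,(\bar\partial\eta)^{\sharp}\llcorner\alpha\rangle\rangle.
\]
Second, I would absorb the cross term by Cauchy--Schwarz with a weight $g>0$: write
\[
2\big|\mathrm{Re}\,\langle\langle D''^{*}\alpha,(\bar\partial\eta)^{\sharp}\llcorner\alpha\rangle\rangle\big|
\leq\|g^{-1/2}D''^{*}\alpha\|^{2}+\|g^{1/2}(\bar\partial\eta)^{\sharp}\llcorner\alpha\|^{2},
\]
and identify $\|g^{1/2}(\bar\partial\eta)^{\sharp}\llcorner\alpha\|^{2}=\langle\langle[\sqrt{-1}g\,\partial\eta\wedge\bar\partial\eta,\Lambda_{\omega}]\alpha,\alpha\rangle\rangle$ — this last identity is the pointwise linear-algebra fact that contraction against $(\bar\partial\eta)^{\sharp}$ is, up to the metric, the square root of the commutator with $\sqrt{-1}g\,\partial\eta\wedge\bar\partial\eta$. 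Moving this term to the left and combining $\|\eta^{1/2}D''^{*}\alpha\|^{2}+\|g^{-1/2}D''^{*}\alpha\|^{2}=\|(\eta+g^{-1})^{1/2}D''^{*}\alpha\|^{2}$ gives precisely the claimed inequality. Alternatively — and this is probably the cleanest route to cite — one replaces the weight $\varphi$ by $\varphi-\log\eta$ in the classical Bochner--Kodaira formula and runs the same absorption; this is exactly the computation in Demailly's book and in \cite{ohsawa3}, so I would mostly reference \cite{demailly2010} and indicate the modifications.

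\textbf{Main obstacle.} The only genuinely delicate point is the bookkeeping of the first-order cross terms: one must check that the terms produced by commuting $\eta$ past $D''$ and $D''^{*}$ combine with the right signs so that, after the $g$-weighted Cauchy--Schwarz, every surviving term is either a norm-square of $(\eta+g^{-1})^{1/2}D''^{*}\alpha$ and $\eta^{1/2}D''\alpha$, or lands inside the single commutator bracket $[\eta\sqrt{-1}\Theta_E-\sqrt{-1}\partial\bar\partial\eta-\sqrt{-1}g\,\partial\eta\wedge\bar\partial\eta,\Lambda_\omega]$. On a Hermitian (non-Kähler) manifold one also has torsion terms, but the statement restricts to a Kähler metric $\omega$, so those vanish and no extra care is needed there. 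Since $\alpha$ is compactly supported, no boundary integral appears and $\Omega$ plays only the role of a relatively compact chart for the integrations by parts; this is why the lemma holds with no curvature or pseudoconvexity hypothesis on $b\Omega$.
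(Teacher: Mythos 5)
Your proposal is correct, and it reproduces the standard twisted Bochner--Kodaira--Nakano argument from Demailly's book. A point worth noting first: the paper does not supply a proof of Lemma~\ref{l:vector} at all --- it is stated as a citation to \cite{ohsawa3,demailly2010} --- so there is no in-text proof to compare against; the comparison is to the cited sources, and your sketch matches them.

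The three mechanical steps are exactly right: (i) the commutation relations $[D'',\eta]=\bar\partial\eta\wedge\cdot$ and $[D''^{*},\eta]=-(\bar\partial\eta)^{\sharp}\llcorner\cdot$ feed into the untwisted Bochner--Kodaira--Nakano identity to produce the curvature term $\langle\langle[\eta\sqrt{-1}\Theta_E-\sqrt{-1}\partial\bar\partial\eta,\Lambda_\omega]\alpha,\alpha\rangle\rangle$ plus a cross term $-2\,\mathrm{Re}\,\langle\langle D''^{*}\alpha,(\bar\partial\eta)^{\sharp}\llcorner\alpha\rangle\rangle$; (ii) Nakano's trick discards $\|\eta^{1/2}D'\alpha\|^2$ and $\|\eta^{1/2}D'^{*}\alpha\|^2$; (iii) a $g$-weighted Cauchy--Schwarz absorbs the cross term, the contraction norm $\|g^{1/2}(\bar\partial\eta)^{\sharp}\llcorner\alpha\|^2$ being pointwise equal to $\langle[\sqrt{-1}g\,\partial\eta\wedge\bar\partial\eta,\Lambda_\omega]\alpha,\alpha\rangle$ on $(n,q)$-forms. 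Your observation that compact support removes the boundary integral and that Kählerness kills torsion terms is also the right way to justify why $b\Omega$ and the Hermitian curvature of $b\Omega$ play no role here, in contrast with Lemma~\ref{l:lem3}. The one thing I would tighten before writing this out in full is the sign bookkeeping in step (i): you should verify that the two commutators you compute (one from $D''$, one from $D''^{*}$) contribute the $-\sqrt{-1}\partial\bar\partial\eta$ inside the bracket with the correct sign and that the remaining first-order pieces really do assemble into a single cross term $-2\,\mathrm{Re}\,\langle\langle D''^{*}\alpha,(\bar\partial\eta)^{\sharp}\llcorner\alpha\rangle\rangle$ with no stray $D''\alpha$ piece; this is where a careless derivation usually slips, and it is the only delicate point as you already flagged.
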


\begin{lemma}[See \cite{demailly99,demailly2010}]\label{l:vector7}\quad
Let $X$ be a complete K\"{a}hler manifold equipped with a $($not
necessarily complete$)$ K\"{a}hler metric $\omega$, and let $E$ be a
holomorphic hermitian vector bundle over $X$. Assume that there are
smooth and bounded functions $\eta$, $g>0$ on $X$ such that the
$($Hermitian$)$ curvature operator
$$B:=[\eta\sqrt{-1}\Theta_{E}-\sqrt{-1}\partial\bar\partial\eta-\sqrt{-1}g
\partial\eta\wedge\bar\partial\eta,\Lambda_{\omega}]$$
is positive definite everywhere on $\Lambda^{n,q}T^{*}_{X}\otimes E$, for some $q\geq 1$.
Then for every form $\lambda\in L^{2}(X,\Lambda^{n,q}T^{*}_{X}\otimes E)$ such that
$D''\lambda=0$ and $\int_{X}\langle B^{-1}\lambda,\lambda\rangle dV_{\omega}<\infty$,
there exists $u\in L^{2}(X,\Lambda^{n,q-1}T^{*}_{X}\otimes E)$ such that $D''u=\lambda$ and
$$\int_{X}(\eta+g^{-1})^{-1}|u|^{2}dV_{\omega}\leq\int_{X}\langle B^{-1}\lambda,
\lambda\rangle dV_{\omega}.$$
\end{lemma}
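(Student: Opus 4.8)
The final statement to prove is Lemma~\ref{l:vector7}, a well-known $L^2$ existence theorem with curvature estimates due to Demailly, valid on a complete K\"ahler manifold equipped with a possibly-incomplete K\"ahler metric $\omega$.

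\medskip

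\emph{Plan of proof.} The plan is to derive Lemma~\ref{l:vector7} from the pointwise a priori inequality in Lemma~\ref{l:vector} by a standard Hilbert-space duality argument, with the completeness of $X$ (for \emph{some} complete K\"ahler metric, not necessarily $\omega$) used only to guarantee the density statement that makes the a priori estimate applicable to all forms in the relevant domains. First I would fix the complete K\"ahler metric $\widehat\omega$ on $X$ and, for $\varepsilon>0$, consider the family $\omega_\varepsilon=\omega+\varepsilon\widehat\omega$, each of which is complete; one proves the result for each $\omega_\varepsilon$ and then lets $\varepsilon\to0$. For fixed $\varepsilon$, completeness of $\omega_\varepsilon$ gives (by a classical cut-off argument, cf. Andreotti--Vesentini / Demailly) that smooth compactly supported forms are dense in the graph norm of $D''\oplus D''^{*}$ on $L^2_{(n,q)}(X,E)$ with respect to $\omega_\varepsilon$. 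Hence the inequality of Lemma~\ref{l:vector}, a priori stated for $\alpha\in\mathcal D(X,\Lambda^{n,q}T_X^{*}\otimes E)$, extends to all $\alpha\in\mathrm{Dom}(D''^{*})\cap\mathrm{Dom}(D'')$ with respect to $\omega_\varepsilon$.

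\medskip

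Next I would set up the duality. Write any $\alpha$ in $\mathrm{Dom}(D''^{*})\cap\mathrm{Ker}(D'')$ and split $L^2$ as $\mathrm{Ker}(D'')\oplus\mathrm{Ker}(D'')^{\perp}$; for $\alpha$ in the orthogonal complement one has $D''^{*}\alpha=0$ and the pairing with the $D''$-closed datum $\lambda$ vanishes, so it suffices to estimate $|\langle\langle\lambda,\alpha\rangle\rangle|$ for $\alpha\in\mathrm{Dom}(D''^{*})\cap\mathrm{Ker}(D'')$. For such $\alpha$, $D''\alpha=0$, so Lemma~\ref{l:vector} (in the extended form just obtained, with $\omega_\varepsilon$) gives
\[
\|(\eta+g^{-1})^{1/2}D''^{*}\alpha\|^{2}\;\geq\;\langle\langle B_\varepsilon\,\alpha,\alpha\rangle\rangle,
\]
where $B_\varepsilon=[\eta\sqrt{-1}\Theta_E-\sqrt{-1}\partial\bar\partial\eta-\sqrt{-1}g\,\partial\eta\wedge\bar\partial\eta,\Lambda_{\omega_\varepsilon}]$. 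By Cauchy--Schwarz with the positive operator $B_\varepsilon$,
\[
|\langle\langle\lambda,\alpha\rangle\rangle|^{2}\leq\Big(\int_X\langle B_\varepsilon^{-1}\lambda,\lambda\rangle\,dV_{\omega_\varepsilon}\Big)\cdot\langle\langle B_\varepsilon\alpha,\alpha\rangle\rangle\leq\Big(\int_X\langle B_\varepsilon^{-1}\lambda,\lambda\rangle\,dV_{\omega_\varepsilon}\Big)\cdot\|(\eta+g^{-1})^{1/2}D''^{*}\alpha\|^{2}.
\]
Thus the linear functional $D''^{*}\alpha\mapsto\langle\langle\lambda,\alpha\rangle\rangle$ is bounded on the range of $D''^{*}$ (with the weighted norm $(\eta+g^{-1})^{1/2}$), so by Hahn--Banach and the Riesz representation theorem there is $u_\varepsilon$ with $(\eta+g^{-1})^{1/2}u_\varepsilon$ of controlled $L^2$ norm and $\langle\langle\lambda,\alpha\rangle\rangle=\langle\langle(\eta+g^{-1})^{1/2}u_\varepsilon,(\eta+g^{-1})^{1/2}D''^{*}\alpha\rangle\rangle=\langle\langle u_\varepsilon,D''^{*}((\eta+g^{-1})\alpha)\rangle\rangle$ for all test $\alpha$; a routine check (the functions $\eta,g$ are bounded, so $(\eta+g^{-1})\alpha$ stays in the domain) shows $D''u_\varepsilon=\lambda$ and
\[
\int_X(\eta+g^{-1})^{-1}|u_\varepsilon|^{2}\,dV_{\omega_\varepsilon}\leq\int_X\langle B_\varepsilon^{-1}\lambda,\lambda\rangle\,dV_{\omega_\varepsilon}.
\]

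\medskip

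Finally I would pass to the limit $\varepsilon\to0$. One uses the monotonicity of the commutator: as $\varepsilon\downarrow0$, $\Lambda_{\omega_\varepsilon}$ decreases in the appropriate sense on $(n,q)$-forms and $dV_{\omega_\varepsilon}\to dV_\omega$, so $\langle B_\varepsilon^{-1}\lambda,\lambda\rangle\,dV_{\omega_\varepsilon}$ is dominated by the $\varepsilon$-independent quantity $\langle B^{-1}\lambda,\lambda\rangle\,dV_\omega$ (this monotonicity is the standard point in Demailly's trick and should be invoked rather than recomputed). Hence the $u_\varepsilon$ are uniformly bounded in $L^2_{\mathrm{loc}}$, and a weak limit $u$ satisfies $D''u=\lambda$ with the stated estimate by lower semicontinuity of the norm under weak convergence together with Fatou/monotone convergence on the right-hand side.

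\medskip

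\emph{Main obstacle.} The genuinely delicate point is the density/completeness step: Lemma~\ref{l:vector} is stated only for compactly supported smooth forms, while the duality argument needs the a priori estimate on the full operator domains, and $\omega$ itself is not assumed complete. Handling this requires the auxiliary complete metric $\omega_\varepsilon=\omega+\varepsilon\widehat\omega$ and the Andreotti--Vesentini density lemma, followed by the $\varepsilon\to0$ limit in which one must verify that the curvature term $B_\varepsilon$ stays positive and its inverse is controlled uniformly — this monotonicity under $\omega\mapsto\omega+\varepsilon\widehat\omega$ is exactly the subtle ingredient, and I would cite \cite{demailly99,demailly2010} for it rather than prove it from scratch. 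All other steps (Hahn--Banach, Riesz, weak compactness, Fatou) are routine.
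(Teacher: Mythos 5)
The paper does not prove Lemma~\ref{l:vector7}; it is quoted directly from Demailly's works \cite{demailly99,demailly2010}. Your reconstruction is the standard argument from those references: derive the a priori estimate from Lemma~\ref{l:vector}, use the auxiliary complete metrics $\omega_\varepsilon=\omega+\varepsilon\widehat\omega$ together with the Andreotti--Vesentini density lemma to extend it to the full operator domains, run the Hahn--Banach/Riesz duality to produce a solution for each $\varepsilon$, and then invoke Demailly's monotonicity of $\langle B_\varepsilon^{-1}\lambda,\lambda\rangle\,dV_{\omega_\varepsilon}$ to pass to the limit $\varepsilon\to0$. So your route is the same one the cited sources take, and you correctly isolate the monotonicity of the commutator under $\omega\mapsto\omega+\varepsilon\widehat\omega$ as the non-routine ingredient.

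One bookkeeping slip in the duality step: from the Riesz representative $w$ with $\langle\langle\lambda,\alpha\rangle\rangle=\langle\langle w,(\eta+g^{-1})^{1/2}D''^{*}\alpha\rangle\rangle$, the correct move is to set $u:=(\eta+g^{-1})^{1/2}w$, which gives $\langle\langle\lambda,\alpha\rangle\rangle=\langle\langle u,D''^{*}\alpha\rangle\rangle$ (hence $D''u=\lambda$) and $\int(\eta+g^{-1})^{-1}|u|^{2}=\|w\|^{2}$. Your intermediate identity $\langle\langle u_\varepsilon,D''^{*}((\eta+g^{-1})\alpha)\rangle\rangle$ is not valid, since $D''^{*}$ does not commute with multiplication by $(\eta+g^{-1})$; with the naming fixed as above the argument closes cleanly and gives exactly the stated estimate.
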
\emph{}

\subsection{Properties of Stein manifolds}
In this subsection, we recall some well-known properties on Stein
manifolds.

\begin{lemma}[\!\!\cite{FN1980}]\label{FN1}\quad
Let $X$ be a Stein manifold and $\varphi \in PSH(X)$.
Then there exists a sequence
$\{\varphi_{n}\}_{n=1,2,\ldots}$ of smooth strongly plurisubharmonic functions such that
$\varphi_{n} \downarrow \varphi$.
\end{lemma}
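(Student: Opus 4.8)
The plan is to combine local smoothing by convolution with a global patching argument along the strictly plurisubharmonic exhaustion that a Stein manifold carries. First recall the local statement: if $\Omega\subset\mathbb{C}^{n}$ is open and $u\in PSH(\Omega)$ is not identically $-\infty$, then for a radial mollifier $\rho_{\varepsilon}$ the convolution $u\ast\rho_{\varepsilon}$ is smooth and plurisubharmonic on $\Omega_{\varepsilon}=\{z:\mathrm{dist}(z,\mathbb{C}^{n}\setminus\Omega)>\varepsilon\}$, one has $u\ast\rho_{\varepsilon}\ge u$, and $u\ast\rho_{\varepsilon}\downarrow u$ pointwise as $\varepsilon\downarrow 0$ (the monotonicity comes from the sub-mean-value inequality for $u$, the limit from upper semicontinuity). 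Adding a term $\varepsilon'|z|^{2}$ turns such a regularization into a \emph{strongly} plurisubharmonic function at the cost of an error that will be sent to $0$.

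Since $X$ is Stein, it carries a smooth strictly plurisubharmonic exhaustion $\rho:X\to\mathbb{R}$; put $X_{a}=\{\rho<a\}$, which is relatively compact, and fix an increasing sequence $a_{0}<a_{1}<\cdots\to\infty$. For each $\nu\in\mathbb{N}$ and each $j$ I would cover the compact set $\overline{X_{a_{j+1}}}$ by finitely many holomorphic coordinate balls, transport $\varphi$ to balls in $\mathbb{C}^{n}$, mollify as above with a small radius $\varepsilon_{j,\nu}$ (decreasing in $\nu$), combine the finitely many chart regularizations by Richberg's \emph{regularized maximum} $\max_{\eta}$ (which is smooth, coincides with the ordinary maximum away from the diagonal, is monotone in each argument, and carries strongly plurisubharmonic functions to strongly plurisubharmonic ones), and add $c_{j,\nu}\rho$ with $c_{j,\nu}\downarrow 0$. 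This produces, on a neighborhood of $\overline{X_{a_{j+1}}}$, a smooth strongly plurisubharmonic function $\varphi_{\nu}^{(j)}$ with $\varphi_{\nu}^{(j)}>\varphi$ there, decreasing in $\nu$, and with $\varphi_{\nu}^{(j)}\downarrow\varphi$ as $\nu\to\infty$.

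Finally I would glue the shell approximations into a single global function. On the annulus $\overline{X_{a_{j+1}}}\setminus X_{a_{j}}$ both $\varphi_{\nu}^{(j)}$ and $\varphi_{\nu}^{(j+1)}$ are defined; by further shrinking the mollification radii and tuning the additive constants one arranges that $\varphi_{\nu}^{(j+1)}$ strictly dominates $\varphi_{\nu}^{(j)}$ in a neighborhood of $\partial X_{a_{j}}$ while $\varphi_{\nu}^{(j)}$ strictly dominates $\varphi_{\nu}^{(j+1)}$ near $\partial X_{a_{j+1}}$. Then the function $\widetilde\varphi_{\nu}$ obtained by successively applying $\max_{\eta}$ along the exhaustion is globally well defined, smooth, strongly plurisubharmonic, everywhere $>\varphi$, and on each $X_{a_{j}}$ eventually (for large $j$) equal to one fixed chart regularization; hence $\widetilde\varphi_{\nu}\downarrow\varphi$ pointwise on $X$, the monotonicity in $\nu$ being inherited from that of the chart pieces because $\max_{\eta}$ and the operation of adding $c_{j,\nu}\rho$ are monotone.

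The hard part is not any single estimate but the combinatorial bookkeeping of the patching: one must choose, simultaneously and compatibly, the chart covers of the shells, the radii $\varepsilon_{j,\nu}$, the coefficients $c_{j,\nu}$, and the regularization parameters $\eta$, so that (i) every chart piece exceeds $\varphi$, (ii) on every overlap one piece strictly dominates, so that $\max_{\eta}$ is genuinely smooth there and inherits strong plurisubharmonicity, and (iii) the resulting sequence still decreases to $\varphi$. Nothing about $X$ enters beyond the existence of the smooth strictly plurisubharmonic exhaustion, which is precisely where Steinness is used; this is the argument of Fornaess and Narasimhan. (Alternatively, one can embed $X$ properly in some $\mathbb{C}^{N}$, take a Docquier--Grauert holomorphic retraction $\pi$ of a Stein neighborhood of $X$ onto $X$, pull $\varphi$ back by $\pi$, and thereby reduce to a patching problem on an open subset of $\mathbb{C}^{N}$; the combinatorial core is the same.)
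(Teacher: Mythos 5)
The paper gives no proof of this lemma: it is simply quoted from Forn\ae ss--Narasimhan, so there is nothing internal to compare against. Your sketch is the standard manifold proof (local mollification, strict plurisubharmonicity by adding a multiple of the exhaustion, Richberg's regularized maximum to patch along the exhaustion), and that route is sound; note that the original Forn\ae ss--Narasimhan argument is set up for Stein \emph{spaces}, where no smooth structure is available for mollification, and proceeds instead through approximation by logarithms of sums of squared moduli of holomorphic functions, with the smooth strong version on manifolds then following via Richberg's theorem. Your more direct approach buys simplicity at the cost of only covering the smooth case, which is exactly what the lemma as stated needs.

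Two points in the sketch deserve tightening, and you have half-acknowledged both. First, for the sequence to converge \emph{down to} $\varphi$ the Richberg spread parameter $\eta$ must itself be sent to $0$ as $\nu\to\infty$ and be coupled to the $\varepsilon_{j,\nu}$ and $c_{j,\nu}$, since $\max_\eta(a,a)>a$ by an $\eta$-dependent amount; you never say this explicitly. Second, the dominance conditions on the shells are stated in a form that does not quite work: the correct requirement is not ``one piece strictly dominates near each end of the annulus,'' but rather that each piece $\varphi_\nu^{(j)}$ be strictly dominated (by more than the regularization spread) by one of the \emph{other} pieces before the boundary of its own domain of definition, so that the regularized maximum is locally a finite smooth combination and the dying piece drops out smoothly; simultaneously, on each fixed $X_{a_k}$ only finitely many pieces should ever be active, which is what makes the glued function stabilize there and lets you read off pointwise convergence and monotonicity from the chart data. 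These are exactly the ``combinatorial bookkeeping'' you flagged as the hard part; they do not invalidate the approach, but a complete proof would have to pin them down.
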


\begin{lemma}\label{l:lem.retra}\quad
Let $X$ be a Stein manifold and $H$ be a closed complex submanifold of $X$. Then there
is an open neighborhood
$U$ of $H$ in $X$ and a holomorphic retraction $r:U\longrightarrow H$.
\end{lemma}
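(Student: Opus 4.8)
The plan is to reduce the statement to the standard local‐to‐global machinery on Stein manifolds. First I would recall that, since $X$ is Stein and $H\subset X$ is a closed complex submanifold, the ideal sheaf $\mathcal I_H\subset\mathcal O_X$ is coherent, and by Cartan's Theorem B we have $H^q(X,\mathcal F)=0$ for $q\geq1$ for every coherent sheaf $\mathcal F$ on $X$; this vanishing is the only serious input and it is precisely what makes the problem tractable. The retraction $r\colon U\to H$ is, near each point of $H$, just the projection onto the first $l$ coordinates in a coordinate chart $(z_1,\dots,z_n)$ adapted to $H$ (so that $H=\{z_{l+1}=\cdots=z_n=0\}$ locally), and the task is to patch these local projections into a single holomorphic map defined on one common neighborhood $U$ of $H$, which is then automatically a retraction because it restricts to the identity on $H$.

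The key steps, in order, are as follows. (1) Embed $X$ as a closed submanifold of some $\mathbb C^N$ via a proper holomorphic embedding (Remmert); then it suffices to construct the retraction for $H$ inside $\mathbb C^N$ and restrict, or alternatively work directly on $X$ using that it is Stein — I would work directly on $X$. (2) Consider the normal bundle $N_{H/X}=\big(\mathcal I_H/\mathcal I_H^2\big)^{*}$ of $H$ in $X$; using Theorem B on $H$ (which is itself Stein, being a closed submanifold of a Stein manifold), produce a holomorphic splitting of the conormal sequence, i.e. realize a neighborhood of the zero section of $N_{H/X}$ holomorphically inside $X$. Concretely, the local coordinate functions $z_{l+1},\dots,z_n$ on overlapping charts differ by invertible holomorphic matrices on $H$ plus higher‐order terms, and one solves a Cousin‐type problem (a $\check{}$Cech $1$‑cocycle valued in a coherent sheaf) to correct them into global holomorphic functions $w_{l+1},\dots,w_m$ on a neighborhood of $H$ whose common zero set is exactly $H$ and whose differentials along $H$ are independent; then $r$ is defined by "following the fibers" of the map $w=(w_{l+1},\dots,w_m)$, i.e. $r$ is the composition of the inclusion of $H$ with the projection along the level sets of $w$ near the zero section. (3) Shrink the neighborhood so that the implicit function theorem applies uniformly and $w$ exhibits $U$ as (biholomorphic to) a neighborhood of the zero section of $N_{H/X}$, with bundle projection giving $r$; verify $r|_H=\mathrm{id}_H$ and holomorphy.

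The main obstacle is step (2): turning the formal/first‑order splitting of the normal bundle into an actual holomorphic neighborhood retraction, i.e. globalizing the local projections. This is exactly where the Stein hypothesis is indispensable — one needs $H^1$ of a suitable coherent sheaf (the sheaf governing the obstruction to patching the local projections, or equivalently the relevant $\mathrm{Hom}$‑ or jet‑sheaf) to vanish, and possibly an inductive argument over the order of jets together with a convergence/shrinking argument to pass from formal to convergent. Everything else — adapted coordinates, the implicit function theorem, checking that the patched map is the identity on $H$ — is routine. Since this is a classical fact (it appears in this form in work of Siu and of Docquier–Grauert on Stein neighborhoods), I would, in the write‑up, simply cite Docquier–Grauert or the standard reference and sketch only the cohomological patching; a full self‑contained proof would belabor well‑known material.
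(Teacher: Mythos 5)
The paper states Lemma \ref{l:lem.retra} without any proof: it is the classical holomorphic retraction theorem of Docquier and Grauert for Stein submanifolds, and the authors simply record it as a known fact. Your sketch correctly outlines the standard argument (split the normal bundle sequence on the Stein submanifold $H$ via Theorem B, globalize the local coordinate projections by an inductive cohomological obstruction argument, and finish with the implicit function theorem), and your instinct to cite Docquier--Grauert or a standard reference rather than reprove is precisely how the paper itself treats this statement.
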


\subsection{Properties of polar functions}
In this subsection, we collect some lemmas on properties of polar
functions.

\begin{lemma}\label{l:lem9}\quad
Let $M$ be a complex manifold of dimension $n$, and $S$ be an
$(n-1)$-dimensional submanifold. Let $\Psi\in\Delta(S)$. There
exists a local coordinate $(z_{1},\ldots,z_{n})$ on a neighborhood
$U$ of $x$ such that $z_{n}=0$ on $S\cap U$ and such that
$\psi:=\Psi-\log|z_{n}|^{2}$ is continuous on $U$. Then we have
$d\lambda_{z}[\Psi]={\rm e}^{-\psi}d\lambda_{z'}$, where $d\lambda_{z}$
and $d\lambda_{z'}$ denote the Lebesgue measures on $U$ and $S\cap
U$.
\end{lemma}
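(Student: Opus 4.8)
The plan is to reduce everything to an explicit computation in the adapted coordinate provided by condition $(2)$ in the definition of $\#(S)$. Since $S$ is $(n-1)$-dimensional near $x$, applying that condition with $l=n-1$ yields a local coordinate $(z_{1},\dots,z_{n})$ on a neighborhood $U$ of $x$ with $S\cap U=\{z_{n}=0\}$ and $\sup_{U\setminus S}|\Psi-\log|z_{n}|^{2}|<\infty$; in particular $\psi:=\Psi-\log|z_{n}|^{2}$ is bounded on $U\setminus S$. Since $\log|z_{n}|^{2}$ is pluriharmonic on $U\setminus S$ and $\Psi$ is plurisubharmonic, $\psi$ is plurisubharmonic and locally bounded above near $S$, hence by the removable singularity theorem for plurisubharmonic functions it extends uniquely to a plurisubharmonic function on $U$, still denoted $\psi$, and one checks that this extension is continuous. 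Shrinking $U$ we may assume $\psi$ is continuous and bounded on a product neighborhood $U=U'\times\{|z_{n}|<r_{0}\}$.

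Next I would identify $d\lambda_{z}[\Psi]$ directly from its defining property. With $dV_{M}=d\lambda_{z}$ and $l=n-1$ we have $n-l=1$, $2n-2l-1=1$, $\sigma_{1}=2\pi$, so $\frac{2(n-l)}{\sigma_{2n-2l-1}}=\frac{1}{\pi}$, and $d\lambda_{z}[\Psi]$ is the least positive measure $d\mu$ on $S\cap U$ with $\int_{S}f\,d\mu\ge\limsup_{t\to\infty}\frac{1}{\pi}\int_{U}f\,{\rm e}^{-\Psi}\mathbf{1}_{\{-1-t<\Psi<-t\}}\,d\lambda_{z}$ for every nonnegative continuous $f$ compactly supported in $U$. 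Writing $z=(z',z_{n})$ and ${\rm e}^{-\Psi}={\rm e}^{-\psi}|z_{n}|^{-2}$, I pass to coordinates $(z',\theta,s)$ with $z_{n}=\rho\,{\rm e}^{i\theta}$ and $s=\log|z_{n}|^{2}$; then $d\lambda_{z_{n}}=\frac{1}{2}{\rm e}^{s}\,ds\,d\theta$, so ${\rm e}^{-\Psi}\,d\lambda_{z_{n}}=\frac{1}{2}{\rm e}^{-\psi}\,ds\,d\theta$, and for fixed $(z',\theta)$ the slice $\{-1-t<\Psi<-t\}$ is an $s$-interval of length $1+o(1)$ which escapes to $s=-\infty$ as $t\to\infty$. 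By Fubini, the continuity of $f$ and $\psi$, and dominated convergence (everything is supported in a fixed compact set on which $\psi$ is bounded), $\int_{U}f\,{\rm e}^{-\Psi}\mathbf{1}_{\{-1-t<\Psi<-t\}}\,d\lambda_{z}\longrightarrow\pi\int_{S\cap U}f(z',0)\,{\rm e}^{-\psi(z',0)}\,d\lambda_{z'}$ as $t\to\infty$, so the limsup is a genuine limit equal to $\pi\int_{S}f\,{\rm e}^{-\psi}\,d\lambda_{z'}$.

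Consequently $\frac{1}{\pi}\limsup_{t}(\cdots)=\int_{S}f\,{\rm e}^{-\psi}\,d\lambda_{z'}$ for all admissible $f$, so the measure ${\rm e}^{-\psi}\,d\lambda_{z'}$ satisfies the defining inequality, with equality. It is moreover the least such measure: if a positive measure $d\mu$ satisfied $\int_{S}f\,d\mu\ge\int_{S}f\,{\rm e}^{-\psi}\,d\lambda_{z'}$ for all nonnegative continuous compactly supported $f$ but failed $d\mu\ge{\rm e}^{-\psi}\,d\lambda_{z'}$, then approximating the indicator of a Borel set on which $d\mu<{\rm e}^{-\psi}\,d\lambda_{z'}$ by such an $f$ would give a contradiction. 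Therefore $d\lambda_{z}[\Psi]={\rm e}^{-\psi}\,d\lambda_{z'}$, which is the assertion.

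The main technical point is the passage to the limit in the defining integral: the thin-shell integrals have to be controlled uniformly in $t$, and it is exactly there that the boundedness and continuity of $\psi$ are used. The preliminary point, to be settled first, is to verify that the bounded plurisubharmonic extension of $\psi$ across $S$ really is continuous, so that ${\rm e}^{-\psi}\,d\lambda_{z'}$ is a genuine Radon measure with continuous positive density.
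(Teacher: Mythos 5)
Your main computation --- passing to the slab integral in the adapted coordinate, writing ${\rm e}^{-\Psi}\,d\lambda_{z_n}=\tfrac12{\rm e}^{-\psi}\,ds\,d\theta$ with $s=\log|z_n|^2$, and letting $t\to\infty$ using continuity of $\psi$ and dominated convergence to get $\tfrac1\pi\limsup_t(\cdots)=\int_S f\,{\rm e}^{-\psi}\,d\lambda_{z'}$ --- is exactly the content behind the paper's one-line proof, which just cites the special case $d\lambda_z[\log|z_n|^2]=d\lambda_{z'}$ together with ``the definition \ldots and the continuity of $\psi$.'' Your argument is the same approach, spelled out.

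There is, however, a real soft spot you half-notice and leave open: you try to \emph{derive} the continuity of $\psi$ from its boundedness, asserting that the bounded plurisubharmonic extension across $S$ ``is continuous,'' and at the end you flag this as the preliminary point still to be settled. That point cannot be settled in the generality you pose it: a bounded plurisubharmonic function extends across an analytic set to a plurisubharmonic (hence upper semicontinuous) function, but nothing forces the extension to be continuous. The correct reading of the lemma is that the existence of a coordinate chart in which $\psi$ is continuous is part of the \emph{hypotheses}, not something to be proved; this is how the lemma is used throughout the paper (in Remark~\ref{r:guan-zhou3}, Theorem~\ref{t:guan-zhou}, Theorem~\ref{t:arak}, Corollary~\ref{c:extended_suita} the relevant $\psi$ is even smooth by construction). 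Once you take continuity of $\psi$ as given, the rest of your argument --- the thin-shell limit, the identification of the limsup as a genuine limit, and the minimality verification --- is complete and correct, and agrees with the paper's intended proof.
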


\begin{proof}\quad
Note that $d\lambda_{z}[\log|z_{n}|^{2}] = d\lambda_{z'}$ for $z=(z',z_{n})$.
By the definition of generalized residue volume form $d\lambda_{z}[\Psi]$ and
the continuity of $\psi$, we can prove the lemma.
\end{proof}

\begin{lemma}\label{l:lem8}\quad
Let $M$ be a complex manifold of dimension $n$. If $\Delta(S)$ is non-empty, then
$G(z,S)\in \Delta(S)$.
\end{lemma}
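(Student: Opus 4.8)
The statement to prove is Lemma~\ref{l:lem8}: if $\Delta(S)$ is non-empty, then the upper envelope $G(z,S)=(\sup\{u(z):u\in\Delta(S)\})^{*}$ itself lies in $\Delta(S)$. Recall that $\Delta(S)$ is the set of plurisubharmonic functions $\Psi$ in $\#(S)$, i.e. negative plurisubharmonic functions with $\Psi^{-1}(-\infty)\supset S$ closed, and with the local normalization $\sup_{U\setminus S}\bigl|\Psi(z)-(n-l)\log\sum_{l+1}^{n}|z_{j}|^{2}\bigr|<\infty$ near each point where $S$ is $l$-dimensional. So I must check three things for $G(\cdot,S)$: (i) it is plurisubharmonic (in particular not identically $-\infty$ and upper-semicontinuous), (ii) it takes values in $[-\infty,0)$ with $G^{-1}(-\infty)\supset S$ closed, and (iii) it satisfies the local two-sided comparison with $(n-l)\log\sum|z_j|^2$ near each point.

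\textbf{Step 1: plurisubharmonicity.} This is the standard Choquet-lemma argument, exactly as cited after the definition of $G(z,S)$ in the text (Lemma 4.23 in \cite{demailly-book}). Since $\Delta(S)\neq\varnothing$, pick $u_0\in\Delta(S)$; then the pointwise sup is $\geq u_0$, so it is not identically $-\infty$, and it is bounded above by $0$ since every $u\in\Delta(S)$ is negative. By Choquet's lemma one can extract a countable subfamily whose sup has the same upper regularization, and the upper regularization of a locally bounded-above family of psh functions is psh. Hence $G(\cdot,S)$ is psh on $M$ and $G(\cdot,S)<0$ (or we at least get $G(\cdot,S)\le 0$; strict negativity on all of $M$ follows because a psh function that is $\le 0$ and equals $0$ somewhere would be $\equiv 0$ by the maximum principle on the connected component, contradicting $G\le u_0<0$ somewhere — one handles components separately, and on a component meeting $S$ the pole forces $G<0$).

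\textbf{Step 2: the pole condition and the local estimate near $S$.} This is where the real content sits. I must show $G(\cdot,S)=-\infty$ on $S$, $G^{-1}(-\infty)$ is closed, and that near a point $x$ where $S$ is $l$-dimensional, in suitable coordinates $(z_1,\dots,z_n)$ with $S\cap U=\{z_{l+1}=\dots=z_n=0\}$, one has $\sup_{U\setminus S}|G(z,S)-(n-l)\log\sum_{l+1}^n|z_j|^2|<\infty$. The upper bound $G(z,S)\le (n-l)\log\sum|z_j|^2+C$ near $S$: take the fixed $u_0\in\#(S)$; by definition $u_0(z)\le (n-l)\log\sum|z_j|^2+C_0$ on $U\setminus S$, but this only bounds $u_0$, not the sup. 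So instead I argue that \emph{every} $u\in\Delta(S)$ satisfies $u(z)\le (n-l)\log\sum_{l+1}^n|z_j|^2+C'$ on a slightly smaller neighborhood with a constant $C'$ independent of $u$: indeed $u$ is negative psh on $U$ and is $\le (n-l)\log\sum|z_j|^2+C_u$ near $S$; to get a uniform constant one uses that $u-(n-l)\log\sum|z_j|^2$ is psh on $U\setminus S$, bounded above near $S$ (so it extends psh across $S$ by the removable-singularity theorem for psh functions, since $S$ has the right codimension structure in the $z_{l+1},\dots,z_n$ directions), and then bounded above on a compact shell by its values there — but those values are $\le -\,(n-l)\log\sum|z_j|^2 \le C'$ uniformly since $u\le 0$. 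Taking the sup over $u$ and then the upper regularization preserves this bound, giving $G(z,S)\le (n-l)\log\sum|z_j|^2+C'$; in particular $G=-\infty$ on $S$, and $S\subset G^{-1}(-\infty)$ which is closed since $G$ is psh. The lower bound $G(z,S)\ge (n-l)\log\sum|z_j|^2-C''$ is immediate from $G\ge u_0$ and the corresponding lower bound for $u_0\in\#(S)$.

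\textbf{Main obstacle.} The routine part is Step~1. The delicate point is the \emph{uniform} upper bound in Step~2 — showing the comparison constant near $S$ can be chosen independently of $u\in\Delta(S)$ — which is what lets the local normalization survive passage to the envelope and its upper regularization. The cleanest way is the removable-singularity argument: $u-(n-l)\log\sum_{l+1}^n|z_j|^2$ is psh and locally bounded above on $U\setminus S$, so extends to a psh function on $U$, whence it is controlled on a small polydisc by its sup on a slightly larger one, and that sup is $\le -\inf\bigl((n-l)\log\sum|z_j|^2\bigr)\le C'$ uniformly because $u\le0$ there. After that, combining with Step~1 and the observation (from the reference \cite{ohsawa5}, Proposition~9, quoted just before this lemma in the broader discussion) that the envelope of $\#(S)$-functions stays in $\#(S)$, we conclude $G(\cdot,S)\in\Delta(S)$. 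One should also note the alternative, shorter route: since each $u\in\Delta(S)$ lies in $\#(S)$ and the text already records (citing Proposition~9 of \cite{ohsawa5}) that $G(\cdot,S)\in\#(S)$, it only remains to add that $G(\cdot,S)$ is psh — which is Step~1 — and we are done.
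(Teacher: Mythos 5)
The paper's own ``proof'' of this lemma is a one-line deferral to Proposition 9 of \cite{ohsawa5}, so there is no internal argument to compare against; what you have done is carry out the verification that the citation encodes, and in substance it is correct. Step 1 (Choquet plus upper regularization) is standard. The heart of the matter is exactly what you flag in Step 2: to see that $G(\cdot,S)$ satisfies the local normalization $\sup_{U\setminus S}\bigl|G(z,S)-(n-l)\log\sum_{j>l}|z_j|^2\bigr|<\infty$, the lower bound comes for free from a single $u_0\in\Delta(S)$, but the upper bound must be \emph{uniform} over $u\in\Delta(S)$. Your mechanism --- extend $u-(n-l)\log\sum_{j>l}|z_j|^2$ plurisubharmonically across the pluripolar set $S$ and then control its interior values by its sup on a compact set of the form $\{|z_j|\le r_1\ (j\le l),\ \sum_{j>l}|z_j|^2=r_2^2\}$ disjoint from $S$, where $u\le 0$ gives a $u$-independent bound --- is the right one, and that uniformity is precisely what lets the bound survive the pointwise sup and the $*$-regularization.

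Two caveats. In Step 1 you wrote ``contradicting $G\le u_0<0$''; the envelope satisfies $G\ge u_0$, and $u_0<0$ alone does not rule out $G\equiv 0$. Strict negativity on a component of $M$ meeting $S$ really comes from the pole $G|_S=-\infty$ that Step 2 produces, as you then observe; on a hypothetical component of $M$ disjoint from $S$ your argument (and the lemma as literally stated) does not force $G<0$, but that is a defect of the statement's generality rather than of your proof. Also, when you write that the psh extension is ``bounded above on a compact shell by its values there,'' make the shell explicit: it must lie in $U\setminus S$ so that $-(n-l)\log\sum_{j>l}|z_j|^2$ is bounded above on it, and one passes to the interior via the maximum principle applied in the $(z_{l+1},\dots,z_n)$-slices for each fixed $(z_1,\dots,z_l)$. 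With those clarifications the proof is sound.
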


\begin{proof}\quad
The proof is similar to the proof of Proposition 9 in \cite{ohsawa5}.
\end{proof}

\section{Proofs of main theorems}
In this section, we give proofs of our main results stated
above.

\subsection{Proof of Theorem \ref{t:guan-zhou3}}
By Remark \ref{r:extend}, it suffices to prove the theorem for the
case when $M$ is a Stein manifold and $S$ is a closed complex
submanifold.

Since $M$ is Stein, one can find a sequence of strictly pseudoconvex
domains $\{D_v\}_{v=1}^\infty$ with smooth boundaries satisfying
$D_v\subset\subset D_{v+1}$ for all $v$ and
$\bigcup^{\infty}_{v=1}D_v=M$.
For $\Psi<0$, by Lemma \ref{l:lem.retra} and convolutions, we can
choose a sequence of smooth functions
$\{\varphi_{v,m}\}_{m=1,2,\ldots}$ and
$\{\Psi_{v,m}\}_{m=1,2,\ldots}$ on $D_{v+1}$, which satisfy
$\varphi_{v,m}+\Psi_{v,m}$ (resp.
$\varphi_{v,m}+(1+\delta)\Psi_{v,m}$) decreasing convergent to
$\varphi+\Psi$ (resp. $\varphi+(1+\delta)\Psi$) on $D_{v}$ and
$\Psi_{v,m}|_{\overline{D_{v}}}<0$, such that
$\Psi_{v,m}+\varphi_{v,m}$ and $(1+\delta)\Psi_{v,m}+\varphi_{v,m}$
are both plurisubharmonic functions. Denote
$\Psi_{v}:=\Psi|_{D_v}$.

Since $M$ is Stein, there is a holomorphic section $\tilde{F}$ of
$K_{M}$ on $M$ such that $\tilde{F}|_{S}={f}$. Let $ds_{M}^{2}$ be a
K\"{a}hler metric on $M$, $dV_{M}$ is the volume form with respect
to $ds_{M}^{2}$.

Let $\{v_{t_0,\varepsilon}\}_{t_{0}\in\mathbb{R},\varepsilon\in(0,\frac{1}{4})}$ be
a family of smooth increasing convex functions on $\mathbb{R}$, which are continuous
functions on $\mathbb{R}\cup+\infty$, such that:

(1) $v_{t_{0},\varepsilon}(t)=t$ for $t\geq-t_{0}-\varepsilon$,
 $v_{t_{0},\varepsilon}(t)={\rm constant}$ for $t<-t_{0}-1+\varepsilon$;

(2) $v''_{t_0,\varepsilon}(t)$ are pointwise convergent to
 ${1}_{\{-t_{0}-1< t<-t_{0}\}}$, when $\varepsilon\to 0$, and
 $0\leq v''_{t_0,\varepsilon}(t)\leq 2$ for any $t\in \mathbb{R}$;

(3) $v'_{t_0,\varepsilon}(t)$ are pointwise convergent to
 $b_{t_{0}}(t)=\int_{-\infty}^{t}{1}_{\{-t_{0}-1< s<-t_{0}\}}ds$
 ($b_{t_{0}}$ is also a continuous function on $\mathbb{R}\cup+\infty$), when
 $\varepsilon\to 0$, and $0\leq v'_{t_0,\varepsilon}(t)\leq1$ for any $t\in \mathbb{R}$.

We can construct the family $\{v_{t_0,\varepsilon}\}_{t_{0}\in\mathbb{R},\varepsilon
\in(0,\frac{1}{4})}$ by the setting
\begin{align}
\label{}
v_{t_0,\varepsilon}(t)&:=\int_{-\infty}^{t}\int_{-\infty}^{t_{1}}\frac{1}
{1-2\varepsilon}{1}_{\{-t_{0}-1+\varepsilon< s<-t_{0}-\varepsilon\}}*
\rho_{\frac{1}{4}\varepsilon}dsdt_{1}\nonumber\\
&\quad-\int_{-\infty}^{0}\int_{-\infty}^{t_{1}}
\frac{1}{1-2\varepsilon}{1}_{\{-t_{0}-1+\varepsilon< s<-t_{0}-\varepsilon\}}*
\rho_{\frac{1}{4}\varepsilon}dsdt_{1},
\end{align}
where $\rho_{\frac{1}{4}\varepsilon}$ is the kernel of convolution satisfying
${\rm Supp}(\rho_{\frac{1}{4}\varepsilon})\subset (-\frac{1}{4}\varepsilon,\frac{1}{4}
\varepsilon)$.

Therefore we have
$$v''_{t_0,\varepsilon}(t)=\frac{1}{1-2\varepsilon}{1}_
{\{-t_{0}-1+\varepsilon< t<-t_{0}-\varepsilon\}}*\rho_{\frac{1}{4}\varepsilon},$$ and
$$v'_{t_0,\varepsilon}(t)=\int_{-\infty}^{t}\frac{1}{1-2\varepsilon}
{1}_{\{-t_{0}-1+\varepsilon< s<-t_{0}-\varepsilon\}}*\rho_
{\frac{1}{4}\varepsilon}ds.$$

Let $\eta=s(-v_{t_{0},m}\circ\Psi_{v,m})$ and $\phi=u(-v_{t_{0},\varepsilon}
\circ\Psi_{v,m})$,
where $s\in C^{\infty}((0,+\infty))$ satisfies $s\geq\frac{1}{\delta}$, and
$u\in C^{\infty}((0,+\infty))$ satisfies $\lim_{t\to+\infty}u(t)=
-\log(1+\frac{1}{\delta})$, such that $u''s-s''>0$, and $s'-u's=1$.
Let $\Phi=\varphi_{v,m}+\Psi_{v,m}+\phi$.

Now let
$\alpha={\sum}'_{|I|=n}\sum^n_{j=1}\alpha_{I\bar
j}dz^I \wedge d\bar z^{j}\in {\rm Dom}_{D_v} (\bar\partial^*)\cap
{\rm Ker}(\bar\partial)\cap C^\infty_{(n,1)}(\overline {D_v})$. By Lemma
\ref{l:lem3} and Cauchy-Schwarz inequality, for
$s\geq\frac{1}{\delta}$ and $\Psi_{v,m}+\varphi_{v,m}$ being a
plurisubharmonic function on $D_{v+1}$, we get
\begin{align}
\label{equ:4.1}
\int_{D_v}(\eta+g^{-1})|\bar{\partial}^{*}_{\Phi}\alpha|^{2}{\rm e}^{-\Phi}dV_{M}
&\geq\underset{|I|=n}{{\sum}'}\sum_{j,k=1}^{n}\int_{D_v}
(-\partial_{j}\bar{\partial}_{k}\eta+\eta\partial_{j}\bar{\partial}_{k}
\Phi-g(\partial_{j} \eta)\bar\partial_{k} \eta )\alpha_{I}^{j }\overline
{{\alpha}^{\bar Ik}}{\rm e}^{-\Phi}dV_{M}\nonumber\\
&\geq\underset{|I|=n}{{\sum}'}\sum_{j,k=1}^{n}\int_{D_v}
(-\partial_{j}\bar{\partial}_{k}\eta+\eta\partial_{j}\bar{\partial}_{k}\phi+\frac{1}
{\delta}\partial_{j}\bar{\partial}_{k}
(\Psi_{v,m}+\varphi_{v,m})\nonumber\\
&\quad-g(\partial_{j} \eta)\bar\partial_{k} \eta )
\alpha_{I}^{j }\overline{{\alpha}^{\bar Ik}}{\rm e}^{-\Phi}dV_{M},
\end{align}
where $g$ is a positive continuous function on $D_{v}$. We need some
calculations in order to determine $g$.

We have
\begin{align}
\label{}
\partial_{j}\bar{\partial}_{k}\eta&=-s'(-v_{t_0,\varepsilon}\circ \Psi_{v,m})
\partial_{j}\bar{\partial}_{k}(v_{t_0,\varepsilon}\circ \Psi_{v,m})\nonumber\\
&\quad+s''(-v_{t_0,\varepsilon}\circ \Psi_{v,m})\partial_{j}(v_{t_0,\varepsilon}
\circ \Psi_{v,m})
\bar{\partial}_{k}(v_{t_0,\varepsilon}\circ \Psi_{v,m}),
\end{align}
and
\begin{align}
\label{}
\partial_{j}\bar{\partial}_{k}\phi&=-u'(-v_{t_0,\varepsilon}\circ \Psi_{v,m})
\partial_{j}\bar{\partial}_{k}v_{t_0,\varepsilon}\circ \Psi_{v,m}\nonumber\\
&\quad+u''(-v_{t_0,\varepsilon}\circ \Psi_{v,m})\partial_{j}(v_{t_0,\varepsilon}\circ
\Psi_{v,m})\bar{\partial}_{k}(v_{t_0,\varepsilon}\circ \Psi_{v,m})
\end{align}
for any $j,k$ which satisfies $1\leq j,k\leq n$.

We have
\begin{align}
\label{}
&\sum_{1\leq j,k\leq n}(-\partial_{j}\bar{\partial}_{k}\eta+\eta\partial_{j}\bar
{\partial}_{k}\phi-g(\partial_{j} \eta)
\bar\partial_{k} \eta )\alpha_{I}^{j}\overline{{\alpha}^{\bar Ik}}\nonumber\\
&\quad=(s'-su')\sum_{1\leq j,k\leq n}\partial_{j}\bar{\partial}_{k}(v_{t_0,\varepsilon}
\circ \Psi_{v,m})\alpha_{I}^{j}\overline{{\alpha}^{\bar Ik}}\nonumber\\
&\qquad+((u''s-s'')-gs'^{2})\sum_{1\leq j,k\leq n}\partial_{j}
(-v_{t_0,\varepsilon}\circ \Psi_{v,m})\bar{\partial}_{k}(-v_{t_0,\varepsilon}
\circ \Psi_{v,m})\alpha_{I}^{j}\overline{{\alpha}^{\bar Ik}}\nonumber\\
&\quad=(s'-su')\sum_{1\leq j,k\leq n}((v'_{t_0,\varepsilon}\circ\Psi_{v,m})
\partial_{j}\bar{\partial}_{k}\Psi_{v,m}+(v''_{t_0,\varepsilon}\circ \Psi_{v,m})
\partial_{j}(\Psi_{v,m})\bar{\partial}_{k}(\Psi_{v,m}))\alpha_{I}^{j}
\overline{{\alpha}^{\bar Ik}}\nonumber\\
&\qquad+((u''s-s'')-gs'^{2})\sum_{1\leq j,k\leq n}\partial_{j}
(-v_{t_0,\varepsilon}\circ \Psi_{v,m})\bar{\partial}_{k}(-v_{t_0,\varepsilon}
\circ \Psi_{v,m})\alpha_{I}^{j}\overline{{\alpha}^{\bar Ik}}.
\end{align}
For simplicity, we omit composite item $(-v_{t_0,\varepsilon}\circ
\Psi_{v,m})$ after $s'-su'$ and $(u''s-s'')-gs'^{2}$ in the above
equalities.

Denote $g=\frac{u''s-s''}{s'^{2}}\circ(-v_{t_0,\varepsilon}\circ
\Psi_{v,m})$. We have
$\eta+g^{-1}=(s+\frac{s'^{2}}{u''s-s''})\circ(-v_{t_0,\varepsilon}
\circ \Psi_{v,m})$.

Since $\varphi_{v,m}+\Psi_{v,m}$ and $\varphi_{v,m}+(1+\delta)\Psi_{v,m}$ are both
plurisubharmonic, and $0\leq v'_{t_0,\varepsilon}\circ\Psi_{v,m}\leq 1$,
we have
\begin{align}
\label{equ:4.2}
&\sum_{1\leq j,k\leq n}[(1-v'_{t_0,\varepsilon}\circ\Psi_{v,m})\partial_{j}
\bar{\partial}_{k}(\Psi_{v,m}+\varphi_{v,m})\nonumber\\
&\quad+(v'_{t_0,\varepsilon}\circ\Psi_{v,m})\partial_{j}\bar{\partial}_{k}
(\varphi_{v,m}+(1+\delta)\Psi_{v,m})]\alpha_{I}^{j}\overline{{\alpha}^{\bar Ik}}\geq 0,
\end{align}
which means
\begin{equation}
\label{equ:4.0}
\sum_{1\leq j,k\leq n}\bigg(\frac{1}{\delta}\partial_{j}\bar{\partial}_{k}(\Psi_{v,m}+
\varphi_{v,m})
+(v'_{t_0,\varepsilon}\circ\Psi_{v,m})\partial_{j}\bar{\partial}_{k}\Psi_{v,m}\bigg)
\alpha_{I}^{j}\overline{{\alpha}^{\bar Ik}}\geq 0.
\end{equation}
Because of $v'_{t_0,\varepsilon}\geq 0$  and $s'-su'=1$, by inequalities
(\ref{equ:4.1}) and (\ref{equ:4.0}), we have
\begin{equation}
\label{equ:3.1}
\begin{split}
\int_{D_v}(\eta+g^{-1})|\bar{\partial}^{*}_{\Phi}\alpha|^{2}{\rm e}^{-\Phi}dV_{M}
\geq\int_{D_v}(v''_{t_0,\varepsilon}\circ{\Psi_{v,m}})
|\alpha\llcorner(\bar \partial \Psi_{v,m})^\sharp|^2{\rm e}^{-\Phi}dV_M.
\end{split}
\end{equation}

Let
$\lambda=\bar{\partial}[(1-v'_{t_0,\varepsilon}(\Psi_{v,m})){\tilde{F}}]$.
By the definition of contraction, Cauchy-Schwarz inequality and
inequality (\ref{equ:3.1}), we can see that
\begin{align}
|(\lambda,\alpha)_{D_v,\Phi}|^{2}&=|((v''_{t_0,\varepsilon}\circ{\Psi_{v,m}})
\bar\partial\Psi_{v,m}\wedge\tilde{F},\alpha)_{D_v,\Phi}|^{2}\nonumber
\\&=|((v''_{t_0,\varepsilon}\circ{\Psi_{v,m}})\tilde{F},\alpha\llcorner
(\bar\partial\Psi_{v,m})^\sharp)_{D_v,\Phi}|^{2}\nonumber\\
&\leq\int_{D_v}
(v''_{t_0,\varepsilon}\circ{\Psi_{v,m}})| \tilde{F}|^2{\rm e}^{-\Phi}dV_{M}
\int_{D_v}(v''_{t_0,\varepsilon}\circ{\Psi_{v,m}})
|\alpha\llcorner(\bar\partial\Psi_{v,m})^\sharp|^2{\rm e}^{-\Phi}dV_{M}\nonumber\\
&\leq
\int_{D_v}
(v''_{t_0,\varepsilon}\circ{\Psi_{v,m}})| \tilde{F}|^2{\rm e}^{-\Phi}dV_M
\int_{ D_v}(\eta+g^{-1})|\bar{\partial}^{*}_{\Phi}\alpha|^{2}{\rm e}^{-\Phi}dV_{M}.
\end{align}

By Lemma \ref{l:lem7}, we have $(n,0)$-form
$u_{v,t_0,m,\varepsilon}$ on $D_{v}$
 satisfying $\bar{\partial}u_{v,t_0,m,\varepsilon}=\lambda$, such that

\begin{equation}
 \label{equ:3.2}
 \int_{ D_v}|u_{v,t_0,m,\varepsilon}|^{2}(\eta+g^{-1})^{-1} {\rm e}^{-\Phi}dV_{M}
  \leq\int_{D_v}(v''_{t_0,\varepsilon}\circ{\Psi_{v,m}})| \tilde{F}|^2{\rm e}^{-\Phi}dV_M.
\end{equation}

Denote $\mu_{1}={\rm e}^{v_{t_0,\varepsilon}\circ\Psi_{v,m}}$,
$\mu=\mu_{1}{\rm e}^{\phi}$. Assume that we can choose $\eta$ and $\phi$
such that $\mu\leq C(\eta+g^{-1})^{-1}$, where $C$
is just the constant in Theorem \ref{t:guan-zhou3}.

Note that $v_{t_0,\varepsilon}(\Psi_{v,m})\geq\Psi_{v,m}$, then we obtain
\begin{equation}
\label{equ:3.8}
\begin{split}
\int_{ D_v}|u_{v,t_0,m,\varepsilon}|^{2} {\rm e}^{-\varphi_{v,m}}dV_{M}\leq
\int_{ D_v}|u_{v,t_0,m,\varepsilon}|^{2}\mu_{1}{\rm e}^{\phi}
{\rm e}^{-\varphi_{v,m}-\Psi_{v,m}-\phi}dV_{M}.
\end{split}
\end{equation}

By inequalities (\ref{equ:3.2}) and (\ref{equ:3.8}), we can obtain that
$$\int_{ D_v}|u_{v,t_0,m,\varepsilon}|^{2} {\rm e}^{-\varphi_{v,m}}dV_{M}\leq C\int_{D_v}
(v''_{t_0,\varepsilon}\circ{\Psi_{v,m}})|
\tilde{F}|^2{\rm e}^{-\Phi}dV_M,$$ under the assumption $\mu\leq C
(\eta+g^{-1})^{-1}$. As $\varphi$ is continuous, $\phi$ and
$(v''_{t_0,\varepsilon}\circ{\Psi_{v,m}})|
\tilde{F}|^2{\rm e}^{-\Psi_{v,m}}$ are uniformly bounded on
$\overline{D}_{v}$ independent of $m$ (because of
${\rm Supp}(v_{t_{0},\varepsilon})\subset\subset(-t_{0}-1,-t_{0})$), it is
clear that $\int_{D_v} (v''_{t_0,\varepsilon}\circ{\Psi_{v,m}})|
\tilde{F}|^2{\rm e}^{-\Phi}dV_M$ are uniformly bounded independent of $m$,
for any given $v$, $t_0$ and $\varepsilon$. By weakly compactness of
the unit ball of $L^{2}_{\varphi}(D_{v})$ and dominated convergence
theorem, we can see that the weak limit of some weakly convergent
subsequence of $\{u_{v,t_{0},m,\varepsilon}\}_{m}$ when
$m\to+\infty$ gives an $(n,0)$-form $u_{v,t_0,\varepsilon}$ on $D_v$
satisfying
\begin{equation}
\label{equ:3.3}\int_{ D_v}|u_{v,t_0,\varepsilon}|^{2} {\rm e}^{-\varphi}dV_{M}
\leq\frac{{C}}{{\rm e}^{A_{t_0}}}\int_{D_v}
(v''_{t_0,\varepsilon}\circ{\Psi_{v}})| \tilde{F}|^2{\rm e}^{-\varphi-\Psi_{v}}dV_M,
\end{equation}
where $A_{t_0}:=\sup_{t\geq t_0}\{u(t)\}$. As $\lim_{t\to+\infty}u(t)=
-\log(1+\frac{1}{\delta})$, it is clear that $\lim_{t_{0}\to\infty}\frac{1}
{{\rm e}^{A_{t_0}}}=1+\frac{1}{\delta}$.

As $\varphi_{v,m}+\Psi_{v,m}$ is decreasing to $\varphi+\Psi$ on
$M$, then for any given $t_{0}$ there exists $m_{0}$ and a
neighborhood $U_{0}$ of $\{\Psi=-\infty\}\cap \overline{D_{v}}$ in
$M$, such that for any $m\geq m_{0}$ and $\varepsilon$,
$v''_{t_0,\varepsilon}\circ\Psi_{v,m}|_{U_{0}}=0$. Now we will show
that $u_{v,t_0,\varepsilon}$ satisfies
$u_{v,t_0,\varepsilon}|_{S}=0$.

As $v''_{t_0,\varepsilon}\circ\Psi_{v,m}|_{U_{0}}=0$ for $m>m_{0}$,
it is clear that $\bar\partial u_{v,t_0,m,\varepsilon}|_{U_0}=0$,
which implies that $u_{v,t_0,\varepsilon}|_{U_0}$ is holomorphic.

Note that $\int_{ D_v}|u_{v,t_{0},m,\varepsilon}|^{2}
{\rm e}^{-\varphi_{v,m}}dV_{M}$ have uniform bound independent of $m$.
Then we can choose a subsequence with respect to $m$ from the chosen
weakly convergent subsequence of $u_{v,t_{0},m,\varepsilon}$, such
that the subsequence is uniformly convergent on any compact subset
of $U_0$, and we still denote the subsequence by
$u_{v,t_{0},m,\varepsilon}$ without ambiguity.

By inequality (\ref{equ:3.2}), we can see that $\int_{
D_v}|u_{v,t_0,m,\varepsilon}|^{2} (\eta+g^{-1})^{-1}
{\rm e}^{-\varphi_{v,m}-\phi-\Psi_{v,m}}dV_{M}$ are uniformly bounded
independent of $m$.

Note
$(\eta+g^{-1})^{-1}=(s(-v_{t_{0},\varepsilon}\circ\Psi_{v,m})+
\frac{s'^{2}}{u''s-s''}\circ(-v_{t_0,\varepsilon}\circ
\Psi_{v,m}))^{-1}$
and
${\rm e}^{-\varphi_{v,m}-\phi}={\rm e}^{-\varphi_{v,m}-u(-v_{t_{0},\varepsilon}\circ\Psi_{v,m})}$
have positive uniform bound independent of $m$. Then $\int_{
K_0}|u_{v,t_0,m,\varepsilon}|^{2}{\rm e}^{-\Psi_{v,m}}dV_{M}$ have uniform
bound independent of $m$ for any compact set $K_{0}\subset\subset
U_0\cap D_v$.

As $\Psi_{v,m'}+\varphi_{v,m'}\leq \Psi_{v,m}+\varphi_{v,m}$, where
$m'\geq m$, we have
$$|u_{v,t_0,m',\varepsilon}|^{2}{\rm e}^{-\Psi_{v,m}-\varphi_{v,m}}\leq|u_{v,t_0,m',
\varepsilon}|^{2}{\rm e}^{-\Psi_{v,m'}-\varphi_{v,m'}}.$$

Note that $\varphi_{v,m}$ have uniform bound independent of $m$,
then $\int_{K_0}|u_{v,t_0,m',\varepsilon}|^{2}{\rm e}^{-\Psi_{v,m}}dV_{M}$
have uniform bound independent of $m$ and $m'$, for any compact set
$K_{0}\subset\subset U_0\cap D_v$.

It is clear that $\int_{
K_0}|u_{v,t_0,\varepsilon}|^{2}{\rm e}^{-\Psi_{v,m}}dV_{M}$ have uniformly
bound independent of $m$, for any compact set $K_{0}\subset\subset
U_0\cap D_v$. Then we have $\int_{
K_0}|u_{v,t_0,\varepsilon}|^{2}{\rm e}^{-\Psi_{v}}dV_{M}<\infty$, for any
compact set $K_{0}\subset\subset U_0\cap D_v$.

Because of $\Psi\in\Delta_{\varphi,\delta}(S)$ and
$\Psi_{v}=\Psi|_{D_v}$, and by condition (2) in the definition of
$\#(S)$, we see that ${\rm e}^{-\Psi}$ is disintegrable near $S$. Then it
is clear that $u_{v,t_0,\varepsilon}$ satisfies
$u_{v,t_0,\varepsilon}|_{S}=0$.

Let $F_{v,t_0,\varepsilon}:=(1-v'_{t_0,\varepsilon}\circ\Psi_{v})\widetilde{F}
-u_{v,t_0,\varepsilon}$. By inequality (\ref{equ:3.3}) and $u_{v,t_0,\varepsilon}|_{S}=0$,
we can see that
$F_{v,t_0,\varepsilon}$ is a holomorphic $(n,0)$-form on $D_{v}$ satisfying
$F_{v,t_0,\varepsilon}|_{S}=\tilde{F}|_{S}$ and

\begin{equation}
\label{equ:3.5}
\int_{ D_v}|F_{v,t_0,\varepsilon}-(1-v'_{t_0,\varepsilon}\circ\Psi_{v})
\tilde{F}|^{2}{\rm e}^{-\varphi}dV_{M}
\leq\frac{C}{{\rm e}^{A_{t_0}}}\int_{D_v}(v''_{t_0,\varepsilon}\circ\Psi_{v})
|\tilde{F}|^{2}{\rm e}^{-\varphi-\Psi_{v}}dV_{M}.
\end{equation}

Given $t_0$ and $D_{v}$, it is clear that
$(v''_{t_0,\varepsilon}\circ\Psi_{v})|
\tilde{F}|^{2}{\rm e}^{-\varphi-\Psi_{v}}$ have uniform bound on $D_{v}$
independent of $\varepsilon$. Then
$\int_{D_v}|(1-v'_{t_0,\varepsilon}\circ\Psi_{v})\tilde{F}|^{2}
{\rm e}^{-\varphi}dV_{M}$ and
$\int_{D_v}v''_{t_0,\varepsilon}\circ\Psi_{v}|\tilde{F}|^{2}{\rm e}^{-\varphi-\Psi_{v}}dV_{M}$
have uniform bound independent of $\varepsilon$, for any given $t_0$
and $D_v$.

As $\bar\partial F_{v,t_{0},\varepsilon}=0$ and weakly compactness
of the unit ball of $L^{2}_{\varphi}(D_{v})$, we see that the weak
limit of some weakly convergent subsequence of
$\{F_{v,t_0,\varepsilon}\}_{\varepsilon}$ when $\varepsilon\to 0$
gives us a holomorphic $(n,0)$-form $F_{v,t_0}$ on $D_{v}$
satisfying $F_{v,t_0}|_{S}=\tilde{F}|_{S}$.

Note that one can also choose a subsequence of the weakly convergent
subsequence of $\{F_{v,t_0,\varepsilon}\}_{\varepsilon}$, such that
the chosen sequence is uniformly convergent on any compact subset of
$D_v$, still denoted by $\{F_{v,t_0,\varepsilon}\}_{\varepsilon}$
without ambiguity.

For any compact subset $K$ in $D_v$, it is clear that
$F_{v,t_0,\varepsilon}$,
$(1-v'_{t_0,\varepsilon}\circ\Psi_{v})\tilde{F}$ and
$(v''_{t_0,\varepsilon}\circ\Psi_{v})|\tilde{F}|^{2}{\rm e}^{-\varphi-\Psi_{v}}$
have uniform bound on $K$ independent of $\varepsilon$.

By using dominated convergence theorem on any compact subset $K$ of
$D_v$ and inequality (\ref{equ:3.5}), we have
\begin{equation}
\int_{K}|F_{v,t_0}-(1-b_{t_0}(\Psi_{v}))\tilde{F}|^{2}{\rm e}^{-\varphi}dV_{M}
\leq\frac{C}{{\rm e}^{A_{t_0}}}\int_{D_v}({1}_{\{-t_{0}-1< t<-t_{0}\}}
\circ\Psi_{v})|\tilde{F}|^{2}{\rm e}^{-\varphi-\Psi_{v}}dV_{M},
\end{equation}
which implies
\begin{equation}
\label{equ:3.4}
\int_{ D_v}|F_{v,t_0}-(1-b_{t_0}(\Psi_{v}))\tilde{F}|^{2}{\rm e}^{-\varphi}dV_{M}
\leq\frac{C}{{\rm e}^{A_{t_0}}}\int_{D_v}({1}_{\{-t_{0}-1< t<-t_{0}\}}
\circ\Psi_{v})|\tilde{F}|^{2}{\rm e}^{-\varphi-\Psi_{v}}dV_{M}.
\end{equation}

By the definition of $dV_{M}[\Psi]$ and $\sum_{k=1}^{n}\frac{\pi^{k}}{k!}
\int_{S_{n-k}}|f|^{2}{\rm e}^{-\varphi}dV_{M}[\Psi]<\infty$, we have
\begin{align}
\label{equ:3.6}
&\limsup_{t_{0}\to+\infty}\int_{D_v}({1}_{\{-t_{0}-1< t<-t_{0}\}}
\circ\Psi_{v})|\tilde{F}|^{2}{\rm e}^{-\varphi-\Psi_{v}}dV_{M}\nonumber\\
&\quad\leq
\limsup_{t_{0}\to+\infty}\int_{M}{1}_{\overline{D}_{v}}
({1}_{\{-t_{0}-1<t<-t_{0}\}}\circ\Psi)|\tilde{F}|^{2}{\rm e}^{-\varphi-\Psi}dV_{M}\nonumber\\
&\quad\leq\sum_{k=1}^{n}\frac{\pi^{k}}{k!}\int_{S_{n-k}}{1}_
{\overline{D}_{v}}|f|^{2}{\rm e}^{-\varphi}dV_{M}[\Psi]
\leq\sum_{k=1}^{n}\frac{\pi^{k}}{k!}\int_{S_{n-k}}|f|^{2}{\rm e}^{-\varphi}dV_{M}[\Psi]<\infty.
\end{align}
Then $\int_{D_v}({1}_{\{-t_{0}-1< t<-t_{0}\}}\circ\Psi_{v})|
\tilde{F}|^{2}{\rm e}^{-\varphi-\Psi_{v}}dV_{M}$
have uniform bound independent of $t_{0}$, for any given $D_v$, and
\begin{align}
\label{equ:3.7}
&\limsup_{t_{0}\to+\infty}\int_{D_v}({1}_{\{-t_{0}-1< t<-t_{0}\}}
\circ\Psi_{v})|\tilde{F}|^{2}{\rm e}^{-\varphi-\Psi_{v}}dV_{M}\nonumber\\
&\quad\leq\sum_{k=1}^{n}\frac{\pi^{k}}{k!}\int_{S_{n-k}}|f|^{2}{\rm e}^{-\varphi}dV_{M}
[\Psi]<\infty.
\end{align}

It is clear that $\int_{
D_v}|F_{v,t_0}-(1-b_{t_0}(\Psi_{v}))\tilde{F}|^{2}
{\rm e}^{-\varphi}dV_{M}$ have uniform bound independent of $t_{0}$, for
any given $D_v$.

As $\int_{
D_v}|(1-b_{t_0}(\Psi_{v}))\tilde{F}|^{2}{\rm e}^{-\varphi}dV_{M}$ have
uniform bound independent of $t_{0}$, by inequality (\ref{equ:3.4})
and
\begin{align}
\label{equ:3.9}
&\bigg(\int_{ D_v}|F_{v,t_0}-(1-b_{t_0}(\Psi_{v}))\tilde{F}|^{2}{\rm e}^{-\varphi}dV_{M}\bigg)
^{\frac{1}{2}}+\bigg(\int_{ D_v}|(1-b_{t_0}(\Psi_{v}))\tilde{F}|^{2}{\rm e}^{-\varphi}dV_{M}\bigg)^
{\frac{1}{2}}\nonumber\\
&\quad\geq\bigg(\int_{ D_v}|F_{v,t_0}|^{2}{\rm e}^{-\varphi}dV_{M}\bigg)^{\frac{1}{2}},
\end{align}
we can obtain that $\int_{ D_v}|F_{v,t_0}|^{2}{\rm e}^{-\varphi}dV_{M}$
have uniform bound independent of $t_{0}$.

By $\bar\partial F_{v,t_{0}}=0$ and weakly compactness of the unit
ball of $L^{2}_{\varphi}(D_{v})$, one can see that the weak limit of
some weakly convergent subsequence of $\{F_{v,t_0}\}_{t_0}$ when
$t_{0}\to+\infty$ gives us a holomorphic $(n,0)$-form $F_{v}$ on
$D_{v}$ satisfying $F_{v}|_{S}=\tilde{F}|_{S}$.

Note that we can also choose a subsequence of the weakly convergent
subsequence of $\{F_{v,t_{0}}\}_{t_{0}}$, such that the chosen
sequence is uniformly convergent on any compact subset of $D_v$,
denoted again by $\{F_{v,t_0}\}_{t_{0}}$ without ambiguity.

For any compact subset $K$ on $D_v$, it is clear that both
$F_{v,t_0}$ and $(1-b_{t_0}\circ\Psi_{v})\tilde{F}$ have uniform
bound on $K$ independent of $t_0$.

By inequalities (\ref{equ:3.4}), (\ref{equ:3.7}) and dominated
convergence theorem on any compact subset $K$ of $D_v$, we have
\begin{equation}
\begin{split}
&\int_{D_v}{1}_{K}|F_{v}|^{2}{\rm e}^{-\varphi}dV_{M}
\leq\frac{C}{{\rm e}^{A_{t_0}}}\sum_{k=1}^{n}\frac{\pi^{k}}{k!}
\int_{S_{n-k}}|f|^{2}{\rm e}^{-\varphi}dV_{M}[\Psi],
\end{split}
\end{equation}
which implies
\begin{equation}
\begin{split}
&\int_{ D_v}|F_{v}|^{2}{\rm e}^{-\varphi}dV_{M}
\leq\frac{C}{{\rm e}^{A_{t_0}}}\sum_{k=1}^{n}\frac{\pi^{k}}{k!}\int_{S_{n-k}}
|f|^{2}{\rm e}^{-\varphi}dV_{M}[\Psi].
\end{split}
\end{equation}
Note that the Lebesgue measure of $\{\Psi=-\infty\}$ is zero.

Now it suffices to find $\eta$ and $\phi$ such that
$(\eta+g^{-1})\leq
C{\rm e}^{-\Psi_{v,\delta}}{\rm e}^{-\phi}=C\mu^{-1}$ on
$D_v$. As $\eta=s(-v_{t_0,\varepsilon}\circ\Psi_{v,\delta})$ and
$\phi=u(-v_{t_0,\varepsilon}\circ\Psi_{v,\delta})$, we have
$(\eta+g^{-1}) {\rm e}^{v_{t_0,\varepsilon}\circ\Psi_{v,\delta}}{\rm e}^{\phi}=
(s+\frac{s'^{2}}{u''s-s''}){\rm e}^{-t}{\rm e}^{u}\circ(-v_{t_0,\varepsilon}\circ\Psi_{v,\delta})$.

We are naturally led to obtain the following system of ODEs:
\begin{equation}
\label{GZ}
\begin{split}
&(1)\ \bigg(s+\frac{s'^{2}}{u''s-s''}\bigg){\rm e}^{u-t}={C}, \\
&(2)\ s'-su'=1,
\end{split}
\end{equation}
where $t\in[0,+\infty)$, and ${C}=1$.

One can solve the ODE (\ref{GZ}) (for details, see the following remark) to
get $u=-\log(1+\frac{1}{\delta}-{\rm e}^{-t})$ and
$s=\frac{(1+\frac{1}{\delta})t+\frac{1}{\delta}(1+\frac{1}{\delta})}
{1+\frac{1}{\delta}-{\rm e}^{-t}}-1$.

One may check that $s\in C^{\infty}((0,+\infty))$ satisfies
$s\geq\frac{1}{\delta}$,
$\lim_{t\to+\infty}u(t)=-\log(1+\frac{1}{\delta})$ and $u\in
C^{\infty}((0,+\infty))$ satisfies $u''s-s''>0$.

\begin{remark}\quad
Now we solve the equation (\ref{GZ}):

By (2) of equation (\ref{GZ}), we have  $su''-s''=-s'u'$. Then (1) of equation
(\ref{GZ}) can be changed into
$$\bigg(s-\frac{s'}{u'}\bigg){\rm e}^{u-t}=C,$$ which is
$$\frac{su'-s'}{u'}{\rm e}^{u-t}=C.$$
By (2) of equation (\ref{GZ}), we have
$$C=\frac{su'-s'}{u'}{\rm e}^{u-t}=\frac{-1}{u'}{\rm e}^{u-t},$$
which is
$$\frac{d{\rm e}^{-u}}{dt}=-u'{\rm e}^{-u}=\frac{{\rm e}^{-t}}{C}.$$ Note that (2) of equation
(\ref{GZ}) is equivalent to $\frac{d(s{\rm e}^{-u})}{dt}={\rm e}^{-u}$. As $s\geq
0$, we obtain the solution
\begin{displaymath}
     \begin{cases}
      u=-\log(a-{\rm e}^{-t}), \\
      \displaystyle s=\frac{at+{\rm e}^{-t}+b}{a-{\rm e}^{-t}},\end{cases}
    \end{displaymath}
when $C=1$, where $a\geq 1$ and $b\geq-1$.

As $\lim_{t\to+\infty}u(t)= -\log(1+\frac{1}{\delta})$, we have
$a=(1+\frac{1}{\delta})$.

By $su''-s''=-s'u'$, it is clear that $u''s-s''>0$ is equivalent to
$s'>0$, which implies $b\leq a^{2}-2a$ (by considering the limit at
$0$ and $\infty$ of $s'$).

As $s\geq\frac{1}{\delta}$, we have $b=
a^{2}-2a=\frac{1}{\delta^{2}}-1$.
\end{remark}

Define $F_v=0$ on $M\backslash D_v$. As $\lim_{t_{0}\to \infty}A_{t_{0}}=
-\log(1+\frac{1}{\delta})$, then the weak limit of some weakly convergent subsequence of
$\{F_v\}_{v=1}^\infty$ gives us a holomorphic $(n,0)$-form $F$ on $M$ satisfying
$F|_{S}=\tilde{F}|_{S}$, and
$$c_n\int_{M}{\rm e}^{-\varphi}F\wedge\bar F=\int_{ M}|F|^{2}{\rm e}^{-\varphi}dV_{M}
\leq C\bigg(1+\frac{1}{\delta}\bigg)\sum_{k=1}^{n}\frac{\pi^{k}}{k!}\int_{S_{n-k}}|f|^{2}
{\rm e}^{-\varphi}dV_{M}[\Psi],$$
where $c_{k}=(-1)^{\frac{k(k-1)}{2}}\sqrt{-1}^k$ for $k\in\mathbb{Z}$.

In conclusion, we have proved Theorem \ref{t:guanzhou1} with the
constant $C=1$.


\subsection{Proof of Remark \ref{r:guan-zhou3}}
Let $\Delta$ be the unit disc on $\mathbb{C}$, with coordinate $z$.
Let $$\varphi(z)=(1+\delta)\max\{\log|z|^{2},\log|a|^{2}\},\quad {\rm and}\quad
\Psi(z)=-\max\{\log|z|^{2},\log|a|^{2}\}+\log|z|^{2}-\varepsilon,$$
where $a\in(0,1)$, $\varepsilon>0$.

As $\varphi$ and $\varphi+(1+\delta)\Psi$ are both plurisubharmonic,
and
$$\varphi+\Psi=\frac{\delta\varphi+(\varphi+(1+\delta)\Psi)}{1+\delta},$$ it is
clear that $\Psi(z)\in \Delta_{\varphi,\delta}(S)$, where $S=\{z=0\}$.

For any $f(0)\neq 0$, it suffices to prove
\begin{equation}
\label{equ:opt1}
\lim_{a\to 0}\frac{\min_{F\in Hol(\Delta)}\int_{\Delta}
|F|^{2}{\rm e}^{-\varphi}d\lambda}{a^{-2\delta}{\rm e}^{\varepsilon}|F(0)|^{2} }=
(1+\delta^{-1})\frac{\pi}{{\rm e}^{\varepsilon}},
\end{equation}
where $F(0)=f(0)$.
It is because ${\rm e}^{-\varphi}d\lambda[\Psi]=a^{-2\delta}{\rm e}^{\varepsilon}\delta_{0}$
(by Lemma \ref{l:lem9}), $\delta_{0}$ is the dirac function at $0$, when
$\varepsilon$ goes to zero,
then we can see that the constant of Theorem \ref{t:guan-zhou3} is optimal.

By Taylor expansion, at $0\in\mathbb{C}$, $F(z)=\sum_{k=0}^{\infty}a_{k}z^{k}$,
where $a_{k}$ are complex constants. Note that $\int_{\Delta}z^{k_{1}}\bar{z}^
{k_{2}}{\rm e}^{-\varphi}d\lambda=0$ when $k_{1}\neq k_{2}$, and
$\int_{\Delta}z^{k_{1}}\bar{z}^{k_{2}}{\rm e}^{-\varphi}d\lambda>0$ when $k_{1}= k_{2}$. It
is clear that
$$\min_{F\in {\rm Hol}(\Delta)}\int_{\Delta}|F|^{2}{\rm e}^{-\varphi}d\lambda=
\int_{\Delta}|F(0)|^{2}{\rm e}^{-\varphi}d\lambda.$$
As
$$\int_{\Delta}{\rm e}^{-\varphi}d\lambda=\pi\bigg(\frac{a^{-2\delta}-1}{\delta}+a^{-2\delta}\bigg) \quad {\rm and}\quad
\lim_{a\to0}\frac{\frac{a^{-2\delta}-1}{\delta}+a^{-2\delta}}{a^{-2\delta}}=
1+\frac{1}{\delta},$$
we can prove the equality (\ref{equ:opt1}).

In the following part, we will show that for any given planar domain and point in the
domain, the constant $C$ is also optimal.

Let $\Omega$ be a planar domain in $\mathbb{C}$, such that unit disc $\Delta\subset\Omega$.
Let $$\varphi_{N}(z)=(1+\delta)\max\{\log|z|^{2},\log|a|^{2},N\log|z|^{2}\},$$ and
$$\Psi_{N}(z)=-\max\{\log|z|^{2},\log|a|^{2},N\log|z|^{2}\}+\log|z|^{2}-\varepsilon,$$
where $a\in(0,1)$, $\varepsilon>0$, $N>3$.
It is clear that $\varphi_{N}(z)|_{\Delta}=\varphi(z)$, and
$\Psi_{N}(z)|_{\Delta}=\Psi(z)$.

By the same arguments as above, we can see that
$$\min_{F\in {\rm Hol}(\Omega)}\int_{\Omega}|F|^{2}{\rm e}^{-\varphi_{N}}
d\lambda\geq\int_{\Delta}|F(0)|^{2}{\rm e}^{-\varphi_{N}}d\lambda.$$

By the similar calculations, we can obtain that for any
$f(0)\neq 0$,
\begin{equation}
\label{}
\lim_{a\to 0}\frac{\min_{F\in {\rm Hol}(\Omega)}\int_{\Omega} |F|^{2}{\rm e}^{-\varphi_{N}}
d\lambda}{a^{-2\delta}{\rm e}^{\varepsilon}|F(0)|^{2}}
\geq
\lim_{a\to 0}\frac{\min_{F\in {\rm Hol}(\Delta)}\int_{\Delta} |F|^{2}{\rm e}^{-\varphi}
d\lambda}{a^{-2\delta}{\rm e}^{\varepsilon}|F(0)|^{2}}
=(1+\delta^{-1})\frac{\pi}{{\rm e}^{\varepsilon}},
\end{equation}
where $F(0)=f(0)$.

Then we have shown that for any given planar domain and point in the
domain, the constant $C$ is also optimal.

\subsection{Proof of Theorem \ref{t:guanzhou1}}
By Remark \ref{r:extend}, it suffices to prove the case when $M$ is
a Stein manifold.

Since $M$ is Stein, we can find a sequence of strictly pseudoconvex
domains $\{D_v\}_{v=1}^\infty$ with smooth boundaries satisfying
$D_v\subset\subset D_{v+1}$ for all $v$ and
$\bigcup^{\infty}_{v=1}D_v=M$.

As $\Psi<0$, by Lemma \ref{FN1}, we can choose a sequence of smooth
plurisuharmonic functions $\{\Psi_{v,m}\}_{m=1,2,\ldots}$ on $M$,
such that $\Psi_{v,m}$ decreasingly converge to $\Psi$ on $D_{v}$
and $\Psi_{v,m}|_{\overline{D_{v}}}<0$. Denote
$\Psi_{v}:=\Psi|_{D_v}$.

Just like the arguments in \cite{ohsawa-takegoshi} and
\cite{ohsawa3}, we can assume that $\varphi$ is smooth on $M$ as in
Theorem \ref{t:guanzhou1}.

Since $M$ is Stein, there is a holomorphic section $\tilde{F}$ of
$K_{M}$ on $M$ such that $\tilde{F}|_{S}={f}$.

Let $ds_{M}^{2}$ be a K\"{a}hler metric on $M$, and $dV_{M}$ be the
volume form with respect to $ds_{M}^{2}$. As $\Psi$ is
plurisubharmonic, it is clear that for any $\delta>0$, we have
$\Psi\in\Delta_{\varphi,\delta}(S)$.

When $\delta$ approaches to infinity, we obtain Theorem \ref{t:guanzhou1} by
Theorem \ref{t:guan-zhou3}.


\subsection{Proof of Corollary \ref{c:ohsawa}}
As $\kappa_{M/S}$ has extremal property, we have
\begin{equation}
\kappa_{M/S}=\sup_{f}
\frac{f\otimes\bar{f}}{2^{-n}\sum_{k=1}^{n}\int_{S_{n-k}}
\frac{c_{n}f\wedge \bar f}{dV_{M}} dV_{M}[G(\cdot,S)]}
=\sup_{f}
 \frac{f\otimes\bar{f}}{2^{-n}\int_{S_{n-k}} \frac{c_{n}f\wedge \bar f}{dV_{M}}
 dV_{M}[G(\cdot,S)]},
\end{equation}
for all holomorphic section $f$ of $K_{X}|_{S_{n-k}}$.

By Lemma \ref{l:lem8}, we have $G(\cdot,S)\in \Delta(S)$. Denote
$\Psi:=G(\cdot,S)$.

By Theorem \ref{t:guanzhou1}, we have
\begin{align}
\label{}
\frac{F\otimes\bar{F}}{2^{-n}c_{n}\int_{M} F\wedge\bar{F}}&\geq
\frac{f\otimes\bar{f}}
{2^{-n}\sum_{k=1}^{n}\frac{\pi^{k}}{k!}\int_{S_{n-k}}
\frac{c_{n}f\wedge \bar f}{dV_{M}} dV_{M}[G(\cdot,S)]}\nonumber\\
&=\frac{f\otimes\bar{f}}{2^{-n}\frac{\pi^{k}}{k!}\int_{S_{n-k}} \frac{c_{n}f\wedge
\bar f}{dV_{M}} dV_{M}[G(\cdot,S)]}.
\end{align}
Then we can obtain that
$$\frac{\pi^{k}}{k!}\frac{F\otimes\bar{F}}{2^{-n}c_{n}\int_{M} F\wedge\bar{F}}\geq
 \frac{f\otimes\bar{f}}{2^{-n}\int_{S_{n-k}} \frac{c_{n}f\wedge \bar f}{dV_{M}}
 dV_{M}[G(\cdot,S)]},$$
where $F$ is an extension of $f$.

As
$$\kappa_{M/S}=\sup_{f} \frac{f\otimes \bar f}{(2^{-n}\int_{S_{n-k}} \frac{c_{n}
f\wedge \bar f}{dV_{M}} dV_{M}[(G\cdot,S)])},$$ we prove the
corollary.

\subsection{Proof of Theorem \ref{t:guan-zhou}}
Just like the arguments in \cite{ohsawa-takegoshi} and
\cite{ohsawa3}, one may assume that $H=S$ and that both $\varphi$
and $\psi$ are smooth on $X$ as in Theorem \ref{t:guan-zhou}. Let
$\Psi=\log|w^{2}|+\psi$. By Lemma \ref{l:lem9}, we have
\begin{equation}
\label{}
d\lambda_{z}[\Psi]=d\lambda_{z}[\log|w^{2}|+\psi]={\rm e}^{-\psi}d\lambda_{z'}.
\end{equation}

By Theorem \ref{t:guanzhou1}, we get the theorem.

\subsection{Proof of Theorem \ref{t:guan-zhou-vector}}
By Remark \ref{r:extend}, it suffices to prove the theorem for the
case when $M$ is a Stein manifold.

Since $M$ is a Stein manifold, we can find a sequence of Stein
manifolds $\{D_v\}_{v=1}^\infty$ satisfying $D_v\subset\subset
D_{v+1}$ for all $v$ and
$\bigcup^{\infty}_{v=1}D_v=M$, and $D_{v}\setminus
S$ are all complete K\"{a}hler. Let $\Psi_{v}:=\Psi|_{D_{v}}$.

Since $M$ is Stein, there is a holomorphic section $\tilde{F}$ of
$K_{M}$ on $M$ such that $\tilde{F}|_{S}={f}$. Let $ds_{M}^{2}$ be a
K\"{a}hler metric on $M$, and $dV_{M}$ is the volume form with
respect to $ds_{M}^{2}$.

Let
$\{v_{t_0,\varepsilon}\}_{t_{0}\in\mathbb{R},\varepsilon\in(0,\frac{1}{4})}$
be family of smooth increasing convex functions on $\mathbb{R}$
(also continuous functions on $\mathbb{R}\cup-\infty$) which are the
same as in the proof of Theorem \ref{t:guan-zhou3}.

Therefore we have
$$v''_{t_0,\varepsilon}=\frac{1}{1-2\varepsilon}{1}_
{\{-t_{0}-1+\varepsilon<
t<-t_{0}-\varepsilon\}}*\rho_{\frac{1}{4}\varepsilon},\quad {\rm and}\quad
v'_{t_0,\varepsilon}=\int_{-\infty}^{t}\frac{1}{1-2\varepsilon}{1}_
{\{-t_{0}-1+\varepsilon<
s<-t_{0}-\varepsilon\}}*\rho_{\frac{1}{4}\varepsilon}ds.$$

Let $\eta=s(-v_{t_{0},m}\circ\Psi)$ and
$\phi=u(-v_{t_{0},\varepsilon}\circ\Psi)$, where $s\in
C^{\infty}((0,+\infty))$ satisfies $s\geq \frac{1}{\delta}$, and
$u\in C^{\infty}((0,+\infty))$ satisfies $\lim_{t\to+\infty}u(t)=
-\log(1+\frac{1}{\delta})$, such that $u''s-s''>0$, and $s'-u's=1$.
Let $h'=h{\rm e}^{-\Psi-\phi}$.

Now let $\alpha\in \mathcal{D}(X,\Lambda^{n,1}T_{M\setminus
S}^{*}\otimes E)$ with compact support on $M\setminus S$. By Lemma
\ref{l:vector}, $s\geq \frac{1}{\delta}$ and
$\Theta_{h{\rm e}^{-\Psi}}\geq 0$ on $M\setminus S$, we get
\begin{align}
\label{equ:10.1}
&\|(\eta+g^{-1})^{\frac{1}{2}}D''^{*}\alpha\|^{2}_{D_v\setminus
S,h'} +\|\eta^{\frac{1}{2}}D''\alpha\|^{2}_{D_v\setminus S,h'}\nonumber\\
&\quad\geq\langle\langle[\eta\sqrt{-1}\Theta_{h'}-\sqrt{-1}\partial\bar\partial\eta-
\sqrt{-1}g\partial\eta\wedge\bar\partial\eta,\Lambda_{\omega}]
\alpha,\alpha\rangle\rangle_{D_v\setminus S,h'}\nonumber\\
&\quad\geq\bigg\langle\bigg\langle\bigg[\eta\sqrt{-1}\partial\bar\partial\phi+\frac{1}{\delta}\sqrt{-1}
\Theta_{h{\rm e}^{-\Psi}}-\sqrt{-1}\partial\bar\partial\eta-
\sqrt{-1}g\partial\eta\wedge\bar\partial\eta,\Lambda_{\omega}\bigg]
\alpha,\alpha\bigg\rangle\bigg\rangle_{D_v\setminus S,h'},
\end{align}
where $g$ is a positive continuous function on $D_{v}\setminus S$.
We need some calculations to determine $g$.

We have
\begin{equation}
\label{}
\partial\bar{\partial}\eta=-s'(-v_{t_0,\varepsilon}\circ \Psi)
\partial\bar{\partial}(v_{t_0,\varepsilon}\circ \Psi)
+s''(-v_{t_0,\varepsilon}\circ
\Psi)\partial(v_{t_0,\varepsilon}\circ \Psi)\wedge
\bar{\partial}(v_{t_0,\varepsilon}\circ \Psi),
\end{equation}
and
\begin{equation}
\label{}
\partial\bar{\partial}\phi=-u'(-v_{t_0,\varepsilon}\circ \Psi)
\partial\bar{\partial}v_{t_0,\varepsilon}\circ \Psi
+ u''(-v_{t_0,\varepsilon}\circ
\Psi)\partial(v_{t_0,\varepsilon}\circ \Psi)
\wedge\bar{\partial}(v_{t_0,\varepsilon}\circ \Psi).
\end{equation}

Therefore
\begin{align}
\label{equ:vector1}
&\eta\sqrt{-1}\partial\bar\partial\phi-\sqrt{-1}\partial\bar\partial\eta-
\sqrt{-1}g\partial\eta\wedge\bar\partial\eta\nonumber\\
&\quad=(s'-su')\sqrt{-1}\partial\bar{\partial}(v_{t_0,\varepsilon}\circ \Psi)
+((u''s-s'')-gs'^{2})\sqrt{-1}\partial(v_{t_0,\varepsilon}\circ
\Psi)\wedge\bar {\partial}(v_{t_0,\varepsilon}\circ \Psi)\nonumber\\
&\quad=
(s'-su')((v'_{t_0,\varepsilon}\circ\Psi)\sqrt{-1}\partial\bar{\partial}
\Psi+(v''_{t_0,\varepsilon}\circ
\Psi)\sqrt{-1}\partial(\Psi)\wedge\bar{\partial}(\Psi))\nonumber\\
&\qquad+((u''s-s'')-gs'^{2})\sqrt{-1}\partial(v_{t_0,\varepsilon}\circ \Psi)
\wedge\bar{\partial}(v_{t_0,\varepsilon}\circ \Psi).
\end{align}

We omit composite item $(-v_{t_0,\varepsilon}\circ \Psi)$ after
$s'-su'$ and $(u''s-s'')-gs'^{2}$ in the above equalities.

Denote $g=\frac{u''s-s''}{s'^{2}}\circ(-v_{t_0,\varepsilon}\circ
\Psi)$. We have
$\eta+g^{-1}=(s+\frac{s'^{2}}{u''s-s''})\circ(-v_{t_0,\varepsilon}\circ
\Psi)$.
Since $\Theta_{h{\rm e}^{-\Psi}}\geq0$ and
$\Theta_{h{\rm e}^{-(1+\delta)\Psi}}\geq0$ on $M\setminus S$ and $0\leq
v'_{t_{0},\varepsilon}\circ\Psi\leq1$, we have
\begin{equation}
(1-v'_{t_0,\varepsilon}\circ\Psi)\Theta_{h{\rm e}^{-\Psi}}+(v'_{t_0,\varepsilon}
\circ\Psi)\Theta_{h{\rm e}^{-(1+\delta)\Psi}}\geq 0
\end{equation}
on $M\setminus S$, which means
\begin{equation}
\label{equ:vector2}
\frac{1}{\delta}\Theta_{h{\rm e}^{-\Psi}}+(v'_{t_0,\varepsilon}\circ\Psi)\partial
\bar{\partial}\Psi\geq 0
\end{equation}
on $M\setminus S$.

As $v'_{t_0,\varepsilon}\geq 0$  and $s'-su'=1$, by equalities
(\ref{equ:10.1}), (\ref{equ:vector1}) and inequality (\ref{equ:vector2}),
we have
\begin{align}
\label{equ:vector3}
\langle B\alpha, \alpha\rangle_{h'}&=\langle[\eta\sqrt{-1}
\Theta_{h'}-\sqrt{-1}\partial\bar\partial\eta-\sqrt{-1}g
\partial\eta\wedge\bar\partial\eta,\Lambda_{\omega}]
\alpha,\alpha\rangle_{h'}\nonumber\\
&\geq
\langle[(v''_{t_0,\varepsilon}\circ \Psi)\sqrt{-1}\partial\Psi\wedge
\bar{\partial}\Psi,\Lambda_{\omega}]\alpha,\alpha\rangle_{h'}
=\langle (v''_{t_{0},\varepsilon}\circ \Psi) \bar\partial\Psi\wedge
(\alpha\llcorner(\bar\partial\Psi)^\sharp ),\alpha\rangle_{h'}.
\end{align}

By the definition of contraction, Cauchy-Schwarz inequality and the
inequality (\ref{equ:vector3}), we have
\begin{align}
\label{}
|\langle (v''_{t_{0},\varepsilon}\circ \Psi)\bar\partial\Psi\wedge
u,v\rangle_{h'}|^{2} &=|\langle (v''_{t_{0},\varepsilon}\circ \Psi)
u,v\llcorner(\bar\partial\Psi)^ \sharp \rangle_{h'}|^{2}\nonumber
\\&\leq\langle( v''_{t_{0},\varepsilon}\circ \Psi) u,u\rangle_{h'}
(v''_{t_{0},\varepsilon}\circ
\Psi)|v\llcorner(\bar\partial\Psi)^\sharp|_{h'}^{2}\nonumber
\\&=\langle (v''_{t_{0},\varepsilon}\circ \Psi) u,u\rangle_{h'}
\langle (v''_{t_{0},\varepsilon}\circ \Psi) \bar\partial\Psi\wedge
(v\llcorner (\bar\partial\Psi)^\sharp ),v\rangle_{h'}\nonumber
\\&\leq\langle (v''_{t_{0},\varepsilon}\circ \Psi )u,u\rangle_{h'}
\langle Bv,v\rangle_{h'},
\end{align}
for any $(n,q)$-form $u$ and $(n,q+1)$-form $v$.

Let
$\lambda=\bar{\partial}[(1-v'_{t_0,\varepsilon}(\Psi)){\tilde{F}}]$,
$u=\tilde{F}$, and $v=B^{-1}\bar\partial\Psi\wedge \tilde{F}$, we
have
$$\langle B^{-1}\lambda,\lambda\rangle_{h'} \leq (v''_{t_0,\varepsilon}
\circ{\Psi})| \tilde{F}|^2_{h'}.$$
Then it is clear that
 $$\int_{D_v\setminus S}\langle B^{-1}\lambda,\lambda\rangle_{h'} dV_{M}
 \leq \int_{D_v\setminus S}(v''_{t_0,\varepsilon}\circ{\Psi})| \tilde{F}|^2_{h'}dV_{M}.$$

By Lemma \ref{l:vector7}, we get $u_{v,t_0,\varepsilon}$ on
$D_{v}\setminus S$ which is an $(n,0)$-form with values in $E$
satisfying $\bar{\partial}u_{v,t_0,\varepsilon}=\lambda$, such that
\begin{equation}
 \label{equ:vector3.2}
 \int_{ D_v\setminus S}|u_{v,t_0,\varepsilon}|^{2}_{h'}(\eta+g^{-1})^{-1}dV_{M}
  \leq\int_{D_v\setminus S}(v''_{t_0,\varepsilon}\circ{\Psi})| \tilde{F}|^2_{h'}dV_M.
\end{equation}

Denote $\mu_{1}={\rm e}^{v_{t_0,\varepsilon}\circ\Psi}$ and
$\mu=\mu_{1}{\rm e}^{\phi}$. Assume one can choose $\eta$ and $\phi$ such
that $\mu\leq C(\eta+g^{-1})^{-1}$, where $C$ is
just the constant in Theorem \ref{t:guan-zhou-vector}.

Note that $v_{t_0,\varepsilon}(\Psi)\geq\Psi$, then we obtain
\begin{equation}
\label{equ:vector3.8}
\int_{ D_v\setminus S}|u_{v,t_0,\varepsilon}|^{2}_{h}dV_{M}
\leq\int_{ D_v\setminus
S}|u_{v,t_0,\varepsilon}|^{2}_{h'}\mu_{1}{\rm e}^{\phi} dV_{M}.
\end{equation}

By inequalities (\ref{equ:vector3.2}) and (\ref{equ:vector3.8}), we have
$$\int_{D_v\setminus S}|u_{v,t_0\varepsilon}|^{2}_{h}dV_{M}
\leq C\int_{D_v\setminus S}
(v''_{t_0,\varepsilon}\circ{\Psi})| \tilde{F}|^2_{h'}dV_M,$$ under
the assumption $\mu\leq C (\eta+g^{-1})^{-1}$.

For any given $t_{0}$ there exist $m_{0}$ and a neighborhood
$U_{0}$ of $\{\Psi=-\infty\}\cap \overline{D_{v}}$ on $M$, such that
for any $\varepsilon$,
$v''_{t_0,\varepsilon}\circ\Psi_{v,m}|_{U_{0}}=0$, we have
$\bar\partial u_{v,t_0,\varepsilon}|_{U_0\setminus S}=0$.

Note that $u_{v,t_0,\varepsilon}$ is locally $L^{2}$ integrable
along $S$, we have $u_{v,t_0,\varepsilon}$ can be extended to
$U_{0}$ as a holomorphic function, which is denoted by
$\tilde{u}_{v,t_0,\varepsilon}$.

As $\Psi\in \Delta_{h,\delta}$, we see that ${\rm e}^{-\Psi}$ is
disintegrable near $S$. Then it is clear that
$\tilde{u}_{v,t_0,\varepsilon}$ satisfies
$\tilde{u}_{v,t_0,\varepsilon}|_{S}=0$, and
\begin{equation}
\label{equ:vector3.3}\int_{
D_v}|\tilde{u}_{v,t_0,\varepsilon}|^{2}_{h}dV_{M}
\leq\frac{C}{{\rm e}^{A_{t_0}}}\int_{D_v}
(v''_{t_0,\varepsilon}\circ{\Psi_{v}})|
\tilde{F}|^2_{h{\rm e}^{-\Psi}}dV_M,
\end{equation}
where $A_{t_0}:=\sup_{t\geq t_0}\{u(t)\}$.

As $\lim_{t\to+\infty}u(t)=-\log(1+\frac{1}{\delta})$, it is clear
that
$\lim_{t_{0}\to\infty}\frac{1}{{\rm e}^{A_{t_0}}}=1+\frac{1}{\delta}$.

Let $F_{v,t_0,\varepsilon}:=(1-v'_{t_0,\varepsilon}\circ\Psi_{v})
\widetilde{F}-\tilde{u}_{v,t_0,\varepsilon}$. By
$\tilde{u}_{v,t_0,\varepsilon}|_{S}=0$, we have that
$F_{v,t_0,\varepsilon}$ is a holomorphic $(n,0)$-form on $D_{v}$
satisfying $F_{v,t_0,\varepsilon}|_{S}=\tilde{F}|_{S}$ and
inequality (\ref{equ:vector3.3}) is reformulated as follows:
\begin{equation}
\label{equ:vector3.5}
\int_{D_v}|F_{v,t_{0},\varepsilon}-(1-v'_{t_{0},\varepsilon}\circ\Psi)
\tilde{F}|^{2}_{h}dV_{M}
\leq\frac{C}{{\rm e}^{A_{t_0}}}\int_{D_v}(v''_{t_0,\varepsilon}
\circ\Psi_{v})|\tilde{F}|^{2}_{h{\rm e}^{-\Psi_{v}}}dV_{M}.
\end{equation}

Given $t_0$ and $D_{v}$, it is clear that
$(v''_{t_0,\varepsilon}\circ\Psi_{v})|\tilde{F}|^{2}_{h{\rm e}^{-\Psi_{v}}}$
have uniform bound on $D_{v}$ independent of $\varepsilon$.

Then
$\int_{D_v}|(1-v'_{t_0,\varepsilon}\circ\Psi_{v})\tilde{F}|^{2}_{h}dV_{M}$
and
$\int_{D_v}v''_{t_0,\varepsilon}\circ\Psi_{v}|\tilde{F}|^{2}_{h{\rm e}^{-\Psi_{v}}}dV_{M}$
have uniform bound independent of $\varepsilon$, for any given $t_0$
and $D_v$.

By $\bar\partial F_{v,t_{0},\varepsilon}=0$ and weakly compactness
of the unit ball of $L^{2}_{\varphi}(D_{v})$, we see that the weak
limit of some weakly convergent subsequence of
$\{F_{v,t_0,\varepsilon}\}_{\varepsilon}$ when $\varepsilon\to 0$
gives us a holomorphic $(n,0)$-form with values in $E$, which is
denoted by $F_{v,t_0}$ on $D_{v}$ and satisfies
$F_{v,t_0}|_{S}=\tilde{F}|_{S}$.

Note that we can also choose a subsequence of the weakly convergent
subsequence of $\{F_{v,t_0,\varepsilon}\}_{\varepsilon}$, such that
the chosen sequence is uniformly convergent on any compact subset of
$D_v$, denoted again by $\{F_{v,t_0,\varepsilon}\}_{\varepsilon}$
without ambiguity.

For any compact subset $K$ on $D_v$, it is clear that
$F_{v,t_0,\varepsilon}$,
$(1-v'_{t_0,\varepsilon}\circ\Psi_{v})\tilde{F}$ and
$(v''_{t_0,\varepsilon}\circ\Psi_{v})|\tilde{F}|^{2}{\rm e}^{-\varphi-\Psi_{v}}$
have uniform bounds on $K$ independent of $\varepsilon$.

By using dominated convergence theorem on any compact subset $K$ of
$D_v$ and inequality (\ref{equ:vector3.5}), we have
\begin{equation}
\int_{K}|F_{v,t_0}-(1-b_{t_0}(\Psi_{v}))\tilde{F}|^{2}_{h}dV_{M}
\leq\frac{C}{{\rm e}^{A_{t_0}}}\int_{D_v}({1}_{\{-t_{0}-1< t<-t_{0}\}}
\circ\Psi_{v})|\tilde{F}|^{2}_{h{\rm e}^{-\Psi_{v}}}dV_{M},
\end{equation}
which implies
\begin{equation}
\label{equ:vector3.4}
\int_{ D_v}|F_{v,t_0}-(1-b_{t_0}(\Psi_{v}))\tilde{F}|^{2}_{h}dV_{M}
\leq\frac{{C}}{{\rm e}^{A_{t_0}}}\int_{D_v}({1}_{\{-t_{0}-1< t<-t_{0}\}}
\circ\Psi_{v})|\tilde{F}|^{2}_{h{\rm e}^{-\Psi_{v}}}dV_{M}.
\end{equation}

By the definition of $dV_{M}[\Psi]$ and since
$\sum_{k=1}^{n}\frac{\pi^{k}}{k!}\int_{S_{n-k}}|f|^{2}_{h}dV_{M}[\Psi]<\infty$,
we have
\begin{align}
\label{equ:vector3.6}
&\limsup_{t_{0}\to+\infty}\int_{D_v}({1}_{\{-t_{0}-1<
t<-t_{0}\}} \circ\Psi_{v})|\tilde{F}|^{2}_{h{\rm e}^{-\Psi_{v}}}dV_{M}\nonumber\\
&\quad\leq
\limsup_{t_{0}\to+\infty}\int_{M}{1}_{\overline{D}_{v}}
({1}_{\{-t_{0}-1<t<-t_{0}\}}\circ\Psi)|\tilde{F}|^{2}_{h{\rm e}^{-\Psi_{v}}}dV_{M}\nonumber\\
&\quad\leq\sum_{k=1}^{n}\frac{\pi^{k}}{k!}\int_{S_{n-k}}
{1}_{\overline{D}_{v}}|f|^{2}_{h}dV_{M}[\Psi]
\leq\sum_{k=1}^{n}\frac{\pi^{k}}{k!}\int_{S_{n-k}}|f|^{2}_{h}dV_{M}[\Psi]<\infty.
\end{align}

Then $\int_{D_v}({1}_{\{-t_{0}-1< t<-t_{0}\}}
\circ\Psi_{v})|\tilde{F}|^{2}_{h{\rm e}^{-\Psi_{v}}}dV_{M}$ have uniform
bound independent of $t_{0}$, for any given $D_v$, and
\begin{equation}
\label{equ:vector3.7}
\limsup_{t_{0}\to+\infty}\int_{D_v}({1}_{\{-t_{0}-1<
t<-t_{0}\}} \circ\Psi_{v})|\tilde{F}|^{2}_{h{\rm e}^{-\Psi_{v}}}dV_{M}
\leq\sum_{k=1}^{n}\frac{\pi^{k}}{k!}\int_{S_{n-k}}|f|^{2}_{h}dV_{M}[\Psi]<\infty.
\end{equation}

It is clear that $\int_{
D_v}|F_{v,t_0}-(1-b_{t_0}(\Psi_{v}))\tilde{F}|^{2}_{h}dV_{M}$ have
uniform bound independent of $t_{0}$, for any given $D_v$.

As $\int_{ D_v}|(1-b_{t_0}(\Psi_{v}))\tilde{F}|^{2}_{h}dV_{M}$ have
uniform bound independent of $t_{0}$, by inequality
(\ref{equ:vector3.4}) and
\begin{equation}
\label{equ:vector3.9}
\bigg(\int_{
D_v}|F_{v,t_0}-(1-b_{t_0}(\Psi_{v}))\tilde{F}|^{2}_{h}dV_{M}\bigg)^
{\frac{1}{2}}+\bigg(\int_{
D_v}|(1-b_{t_0}(\Psi_{v}))\tilde{F}|^{2}_{h}dV_{M}\bigg)^
{\frac{1}{2}}\geq \bigg(\int_{
D_v}|F_{v,t_0}|^{2}_{h}dV_{M}\bigg)^{\frac{1}{2}},
\end{equation}
we can obtain that $\int_{ D_v}|F_{v,t_0}|^{2}_{h}dV_{M}$ have
uniform bound independent of $t_{0}$.

Because of $\bar\partial F_{v,t_{0}}=0$ and weakly compactness of
the unit ball of $L^{2}_{\varphi}(D_{v})$, we see that the weak
limit of some weakly convergent subsequence of $\{F_{v,t_0}\}_{t_0}$
when $t_{0}\to+\infty$ gives us a holomorphic $(n,0)$-form with
values in $E$, which is denoted by $F_{v}$ on $D_{v}$ and satisfies
$F_{v}|_{S}=\tilde{F}|_{S}$.

Note that we can also choose a subsequence of the weakly convergent
subsequence of $\{F_{v,t_{0}}\}_{t_{0}}$, such that the chosen
sequence is uniformly convergent on any compact subset of $D_v$,
denoted by $\{F_{v,t_0}\}_{t_{0}}$ without ambiguity.

For any compact subset $K$ on $D_v$, it is clear that both of
$F_{v,t_0}$ and $(1-b_{t_0}\circ\Psi_{v})\tilde{F}$ have uniform
bound on $K$ independent of $t_0$.

By inequalities (\ref{equ:vector3.4}), (\ref{equ:vector3.7})
and dominated convergence theorem on any compact subset $K$ of
$D_v$, we have
\begin{equation}
\int_{D_v}{1}_{K}|F_{v}|^{2}_{h}dV_{M}
\leq\frac{{C}}{{\rm e}^{A_{t_0}}}\sum_{k=1}^{n}\frac{\pi^{k}}{k!}
\int_{S_{n-k}}|f|^{2}_{h}dV_{M}[\Psi],
\end{equation}
which implies
\begin{equation}
\int_{ D_v}|F_{v}|^{2}_{h}dV_{M}
\leq\frac{{C}}{{\rm e}^{A_{t_0}}}\sum_{k=1}^{n}\frac{\pi^{k}}{k!}
\int_{S_{n-k}}|f|^{2}_{h}dV_{M}[\Psi].
\end{equation}
Note that the Lebesgue measure of $\{\Psi=-\infty\}$ is zero.

It suffices to find $\eta$ and $\phi$ such that $$(\eta+g^{-1})\leq
C{\rm e}^{-\Psi_{v}}{\rm e}^{-\phi}=C\mu^{-1}\quad {\rm on}\ D_v.$$ As
$\eta=s(-v_{t_0,\varepsilon}\circ\Psi_{v})$ and
$\phi=u(-v_{t_0,\varepsilon} \circ\Psi_{v})$, we have $$(\eta+g^{-1})
{\rm e}^{v_{t_0,\varepsilon}
\circ\Psi_{v}}{\rm e}^{\phi}=(s+\frac{s'^{2}}{u''s-s''}){\rm e}^{-t}{\rm e}^{u}
\circ(-v_{t_0,\varepsilon}\circ\Psi_{v}).$$

We are naturally led to obtain the following system of ODEs:
\begin{equation}
\label{vectorGZ}
\begin{split}
&(1)\ \bigg(s+\frac{s'^{2}}{u''s-s''}\bigg){\rm e}^{u-t}=C, \\
&(2)\ s'-su'=1,
\end{split}
\end{equation}
where $t\in[0,+\infty)$, and $C=1$.

One can solve the ODEs by the same method as in Remark \ref{GZ} and
get $$u=-\log\bigg(1+\frac{1}{\delta}-{\rm e}^{-t}\bigg)\quad {\rm and}\quad
s=\frac{(1+\frac{1}{\delta})t+\frac{1}{\delta}(1+\frac{1}{\delta})}
{1+\frac{1}{\delta}-{\rm e}^{-t}}-1,$$ which satisfy the ODEs
(\ref{vectorGZ}).

One may check that $s\in C^{\infty}((0,+\infty))$ satisfies
$s\geq\frac{1}{\delta}$,
$\lim_{t\to+\infty}u(t)=-\log(1+\frac{1}{\delta})$ and $u\in
C^{\infty}((0,+\infty))$ satisfies $u''s-s''>0$.

Define $F_v=0$ on $M\backslash D_v$. As $\lim_{t_{0}\to
\infty}A_{t_{0}}=-\log(1+\frac{1}{\delta})$, then the weak limit of
some weakly convergent subsequence of $\{F_v\}_{v=1}^\infty$ gives
us a holomorphic section $F$ of $K_{M}\otimes E$ on $M$ satisfying
$F|_{S}=\tilde{F}|_{S}$, and
$$\int_{ M}|F|^{2}_{h}dV_{M}
\leq C\bigg(1+\frac{1}{\delta}\bigg)\sum_{k=1}^{n}\frac{\pi^{k}}{k!}
\int_{S_{n-k}}|f|^{2}_{h}dV_{M}[\Psi],$$ where
$c_{k}=(-1)^{\frac{k(k-1)}{2}}\sqrt{-1}^k$ for $k\in\mathbb{Z}$.

In conclusion, we have proved Theorem \ref{t:guan-zhou-vector} with
the constant $C=1$.

\section{Bergman kernel and logarithmic capacity on Riemann
surfaces}
In this section, we show some relationships between Bergman kernel
and logarithmic capacity on compact and open Riemann surfaces.

\subsection{Proof of Theorem \ref{t:arak}}
Let ${\rm e}^{2\varphi}|dz|^{2}$ be the Poincar\'{e} metric on $X$, it is
clear that
 $\omega={\rm e}^{2\varphi}dz\wedge d\bar{z}$.
Then ${\rm e}^{-2\varphi}$ is a Hermitian metric of $K_{X}$ on $X$, denoted by $h$.
It is known that
$$c_{1}(K_{X})=\frac{\sqrt{-1}}{\pi}\partial\bar\partial(2\varphi)=b\omega,$$
where $b$ is a positive constant, and
$$\frac{\sqrt{-1}}{\pi}\partial\bar\partial g(\cdot,q)|_{X\setminus q}=-a\omega,$$
where $a$ is a positive constant.

Note that $$\int_{X}\frac{\sqrt{-1}}{\pi}\partial\bar\partial
g(\cdot,q)=0\quad {\rm and}\quad
\frac{\sqrt{-1}}{\pi}\partial\bar\partial g(\cdot,q)=[\{q\}]-a\omega,$$
then we have
$$\int_{X}a\omega=1.$$
Note that $$\int_{X}b\omega=\int_{X}c_{1}(K_{X})=2(g-1),$$
then it is clear that
$$\frac{2a}{b}=\frac{1}{g-1}.$$
Let $\Psi=2g(\cdot,p)$. We choose
$$\delta=(g-1)(m-1)-1,$$ then it is clear that
$$\Psi\in\Delta_{h^{m},\delta}\quad {\rm and}\quad
1+\frac{1}{\delta}=1+\frac{1}{(g-1)(m-1)-1}.$$
Without loss of generality, we can assume that $p$ is $o\in\Delta$,
$z\in\Delta$ is the local coordinate near $p$. Then by the
definition of $c_{X}(p)$, the relation between Euclidean distance
and Poincar\'{e} distance near $o\in\Delta$, and Lemma \ref{l:lem9},
we have
\begin{align}
\label{equ:arak1}
\omega[\Psi]&={\rm e}^{2\varphi}dz\wedge d\bar{z}[2\log {\rm dist}_{\omega}(\cdot,p)+2\log c_{X}(p)]\nonumber
\\&={\rm e}^{2\varphi}dz\wedge d\bar{z}[2\log {\rm dist}_{\omega}(\cdot,p)]c^{-2}_{X}(p)\nonumber
\\&=2{\rm e}^{2\varphi}d\lambda_{z}[2\log |z-p|+2\varphi(p)]c^{-2}_{X}(p)\nonumber
\\&=2{\rm e}^{2\varphi}d\lambda_{z}[2\log |z-p|]{\rm e}^{-2\varphi(p)}c^{-2}_{X}(p)\nonumber
\\&=2d\lambda_{z}[2\log |z-p|]c^{-2}_{X}(p)\nonumber
\\&=2[\{p\}]c^{-2}_{X}(p),
\end{align}
where $d\lambda_{z}$ is the Lebesgue measure with respect to $z$.
Corollary \ref{coro:line1} and equality (\ref{equ:arak1}) tell us that there exists a
holomorphic section $F$ of $mK_{X}$ on
$X$, such that
$F|_{p}=(dw)^{m}$ and
$$\int_{X}\sqrt{-1}|F|^{2}_{h^{m}}\omega\leq\pi\bigg(1+\frac{1}{\delta}\bigg)2|
(dw)^{m}|^{2}_{h^{m}}c_{X}^{-2}(p).$$

Since
$$|\kappa_{X}(p,p)|_{h^{m}}\geq \frac{2|f|^{2}_{h^{m}}|_{p}}{\int_{X}|f|^{2}_{h^{m}
}\omega},$$ for any nonzero holomorphic section $f$ of $mK_{X}$ on
$X$, we obtain the theorem.


\subsection{Proof of Theorem \ref{t:arak1}}
As $X$ is a complex torus, it is known that $c_{1}(L)=b\omega$, $b$
is a positive constant. It is known that
$$\frac{\sqrt{-1}}{\pi}\partial\bar\partial g(\cdot,q)|_{X\setminus q}=-a\omega,$$
where $a$ is a positive constant.

Note that $$\int_{X}\frac{\sqrt{-1}}{\pi}\partial\bar\partial
g(\cdot,q)=0\quad{\rm and}\quad
\frac{\sqrt{-1}}{\pi}\partial\bar\partial g(\cdot,q)=[\{q\}]-a\omega,$$
then we have
$$\int_{X}a\omega=1.$$

Note that $$\int_{X}b\omega=\int_{X}c_{1}(L)=d,$$ then it is clear
that
$$\frac{2a}{b}=\frac{2}{d}.$$
Let $\Psi=2g(\cdot,p)$. We choose
$$\delta=\frac{d}{2}-1,$$ then it is clear that
$$\Psi\in\Delta_{h_{L},\delta}\quad{\rm and}\quad
1+\frac{1}{\delta}=1+\frac{1}{\frac{d}{2}-1}.$$

By the same method as in equality (\ref{equ:arak1}), we have
\begin{equation}
\omega[\Psi]=2[\{p\}]c^{-2}_{X}(p).
\end{equation}

Corollary \ref{coro:line1} and equality (\ref{equ:arak1}) tell us that
there exists a holomorphic section $F$ of $K_{X}\otimes L$ on $X$,
such that $F|_{p}=f(0)$ and
$$\int_{X}\sqrt{-1}|F|^{2}_{h_{L}}\omega\leq\pi\bigg(1+\frac{1}{\delta}\bigg)2|f(0)|^{2}
_{h_{L}}c_{X}^{-2}(p).$$

Since
$$|\kappa_{X,d}(p,p)|_{h_{L}}\geq \frac{2|f|^{2}_{h_{L}}|_{p}}{\int_{X}|f|^{2}
_{h_{L}}\omega},$$ for any nonzero holomorphic section $f$ of
$K_{X}\otimes L$ on $X$, we obtain the theorem.




\subsection{Proof of Corollary \ref{c:extended_suita}}
Note that $2G_{\Omega}(\xi,z)-\log|\xi-z|^{2}$ is smooth with respect
to $\xi$, on the coordinate neighborhood of $z$, when $\Omega$ is a
Riemann surface admitting the Green function $G_{\Omega}(\xi,z)$.

By Lemma \ref{l:lem9}, we have
\begin{equation}
\label{equ:sui1.1}
d\lambda_{z}[2G_{\Omega}(\xi,z)]=d\lambda_{z}[\log|\xi-z|^{2}+(2G_{\Omega}
(\xi,z)-\log|\xi-z|^{2})]={\rm e}^{-(2G_{\Omega}(\xi,z)-\log|\xi-z|^{2})}\delta_{z}.
\end{equation}

Let $w$ be the local coordinate of a neighborhood of $z_{0}$. Then extended Suita
conjecture becomes
$$(c_{\beta}(z_{0}))^{2}|dw|^{2}\leq\pi \rho(z_{0})\kappa_{\Omega,\rho}(z_{0}).$$

Let $M=\Omega$, $\varphi=2h$, $S=z_0$, and
$\Psi(z)=2G_{\Omega}(z,z_{0})$ in Theorem \ref{t:guanzhou1}. Then
the theorem and equality~(\ref{equ:sui1.1}) tell us that there exists
a holomorphic $(1,0)$-form $F$ on $\Omega$, such that $F|_{z_{0}}=dw$
and
$$\int_{\Omega}\sqrt{-1}\rho F\wedge\bar{F}\leq\frac{2{C}\pi\rho(z_{0})}
{(c_{\beta}(z_{0}))^{2}}.$$

Since
$$\kappa_{\Omega}(z_{0})\geq \frac{2f\otimes\bar{f}|_{z_{0}}}{\sqrt{-1}
\int_{\Omega}\rho f\wedge\bar{f}}$$ for any nonzero holomorphic
$(1,0)$-form $f$ on $\Omega$, we obtain Corollary
\ref{c:extended_suita}.

\subsection{Proof of Proposition \ref{t:bound}}
As
$C(\Omega,z_{0}):=\frac{c_{\beta}(z_{0})^{2}|dz|^{2}}{\pi\kappa_
{\Omega}(z_{0})}$, by arguments in the proof of Corollary
\ref{c:extended_suita}, it is clear that there exists a holomorphic
$(1,0)$-form $F$ on $\Omega$ satisfying
\begin{equation}
\label{equ:bound1}
\int_{\Omega}\sqrt{-1}F\wedge\bar{F}={C}(\Omega,z_{0})\pi\int_{z_0}|F|^{2}dV_{\Omega}
[2G_{\Omega}(\cdot,z_{0})],
\end{equation}
and $F|_{z_{0}}\neq 0$.

By Theorem 2.1 in \cite{dgz}, there exists a holomorphic embedding
$f: \Omega\rightarrow \Delta$ such that $f(z_{0})=0$ and $\Delta(0,
s_\Omega(z_{0}))\subset f(\Omega)$.

By the submean value inequality of
plurisubharmonic function on $\Delta(0, s_\Omega(z_{0}))$, we have
$$\int_{f(\Omega)}\sqrt{-1}f_{*}F\wedge f_{*}\bar{F}\geq \pi s_{\Omega}^{2}(z_{0})
\int_{f(z_0)}|f_{*}F|^{2}dV_{f(\Omega)}[\log|w|^{2}].$$

As $\int_{f(\Omega)}\sqrt{-1}f_{*}F\wedge f_{*}\bar{F}=
\int_{\Omega}\sqrt{-1}F\wedge\bar{F},$ it is clear that
$$\int_{\Omega}\sqrt{-1}F\wedge\bar{F}\geq \pi s_{\Omega}^{2}(z_{0})
\int_{f(z_0)}|f_{*}F|^{2}dV_{f(\Omega)}[\log|w|^{2}],$$ where $w$ is
the coordinate of $\Delta\subset\mathbb{C}$.

By equality (\ref{equ:bound1}), we obtain that
$$\pi{C}(\Omega,z_{0})\int_{z_0}|F|^{2}dV_{\Omega}[2G_{\Omega}(\cdot,z_{0})]
\geq \pi s_{\Omega}^{2}(z_{0})\int_{z_0}|f_{*}F|^{2}dV_{f(\Omega)}[\log|w|^{2}].$$
As $\log|w|^{2}-f_{*}2G_{\Omega}(\cdot,z_{0})$ is a negative smooth function on $\Omega$,
by Lemma \ref{l:lem9},
we have $$\int_{f(z_0)}|f_{*}F|^{2}dV_{f(\Omega)}[f_{*}2G_{\Omega}(\cdot,z_{0})]\leq
\int_{f(z_0)}|f_{*}F|^{2}dV_{f(\Omega)}[\log|w|^{2}].$$
Note that $$\int_{z_0}|F|^{2}dV_{\Omega}[2G_{\Omega}(\cdot,z_{0})]=
\int_{f(z_0)}|f_{*}F|^{2}dV_{f(\Omega)}[f_{*}2G_{\Omega}(\cdot,z_{0})],$$
and $1\geq{C}(\Omega,z_{0})$,
then we have ${C}(\Omega,z_{0})\geq s_{\Omega}^{2}(z_{0})$.

When $\lim_{z\to\partial\Omega}s_{\Omega}(z)=1$, it is clear that
$\lim_{z\to\partial\Omega}{C}(\Omega,z)=1$.

\Acknowledgements{This work was supported by National Natural Science Foundation of China (Grant No. 11031008).
The authors thank Prof. Ohsawa for explaining his work. The authors are grateful
to Prof. Yum-Tong Siu and Prof. Jean-Pierre Demailly for their
encouragements and useful discussions. An announcement of the
present paper appears in \cite{guan-zhou12a}.
}


\end{document}